\documentclass[a4paper,12pt]{article}
\usepackage{amssymb}
\usepackage[T1]{fontenc}
\usepackage{amsfonts}
\usepackage{amsmath, amsthm}
\newtheorem{theorem}{Theorem}

\newtheorem{definition}[theorem]{Definition}

\topmargin -0.75in
\setlength{\oddsidemargin}{-0.2in}
\setlength{\textwidth}{6.5in}
\textheight 9.5in

\usepackage{graphicx,multirow}
\usepackage{float}

\usepackage{epstopdf}
\usepackage{subfigure}

\begin{document}

\title{Mathematical model for pest-insect control using mating disruption and trapping}
\author{Roumen Anguelov$^{a}$, Claire Dufourd$^{a}$, Yves Dumont$^{b,}$\footnote{ Corresponding author: yves.dumont@cirad.fr}\\
$^a$Department of Mathematics and Applied Mathematics, University of Pretoria, \\ Pretoria, South Africa\\
$^b$CIRAD, Umr AMAP, Montpellier, France}

\maketitle

\begin{abstract}
Controlling pest insects is a challenge of main importance to preserve crop production. In the context of Integrated Pest Management (IPM) programs, we develop a generic model to study the impact of mating disruption control using an artificial female pheromone to confuse males and adversely affect their mating opportunities. Consequently the reproduction rate is diminished leading to a decline in the population size. For more efficient control, trapping is used to capture the males attracted to the artificial pheromone.  
The model, derived from biological and ecological assumptions, is governed by a system of ODEs.
A theoretical analysis of the model without control is first carried out to establish the properties of the endemic equilibrium. Then, control is added and the theoretical analysis of the model enables to identify threshold values of pheromone which are practically interesting for field applications. In particular, we show that there is a threshold above which the global asymptotic stability of the trivial equilibrium is ensured, i.e. the population goes to extinction. Finally we illustrate the theoretical results via numerical experiments.
\end{abstract}

\paragraph*{key words:} Ordinary differential equations; monotone dymanical system; stability analysis; pest control; pheromone traps; mating disruption; insect trapping
\section{Introduction}
\label{Introduction}

Pest insects are responsible for considerable damages on crops all over the world. Their presence can account for high production losses having repercussions on trading and exports as well on the sustainability of small farmers whose incomes entirely rely on their production. Exotic pests, can be particularly harmful as they can exhibit high invading potential due to the lack of natural enemies and their capacity to adapt to wide range of hosts and/or climate conditions. Therefore, pest management is essential to prevent devastating impact on economy, food security, social life, health and biodiversity. 

Chemical pesticides have long been used to control pest populations. However, their extensive use can have undesired side effects on the surrounding environment, such as reduction of the pest's natural enemies and pollution. Further, the development of insect resistance to the chemical lead to the need of using stronger and more toxic pesticides to maintain their efficacy. Thus, extensive use of pesticides is not a sustainable solution for pest control.    
Constant efforts are being made to reduce the toxicity of the pesticides for applicators and consumers, and alternative methods are being developed or improved to satisfy the charter of Integrated Pest Management (IPM) programs~\cite{apple1976integrated}. IPM aims to maintain pest population at low economic and epidemiological risk while respecting specific ecological and toxicological environmentally friendly requirements.

The Sterile Insect Technique (SIT), Mass Annihilation Technique (MAT) or mating disruption are examples of methods part of IPM strategies. SIT consists in releasing large numbers of sterilised males to compete with wild males for female insemination, reducing the number of viable offspring, while MAT consists in reducing the number of one or both sexes by trapping using a species-specific attractant. Pheromones or para-pheromones are often used to manipulate the behaviour of a specific species~\cite{witzgall2010sex, howse1998insect}. In this work, particular interest is given to mating disruption control which aims to obstruct the communication among sexual partners using lures to reduce the mating rate of the pest and thus lead to long-term reduction of the population~\cite{carde1990principles}. 

Mating disruption using pheromones has been widely studied to control moth pests~\cite{csiro1982mechanisms, carde1995control} on various types of crops. An early demonstration of the applicability of MAT has been shown for the eradication of \textit{Bactrocera doraslis} in the Okinawa Islands in 1984~\cite{koyama1984eradication}. More recently, the method has shown to be successful for the control of \textit{Tuta absoluta} on tomato crops in Italian greenhouses~\cite{cocco2013control}. Other successful cases are reported in~\cite{carde1995control}, such as for the control of the pink bollworm \textit{Pectinophora gossypiella} which attacks cotton, or the apple codling moth \textit{Cydia pomonella}. However, mating disruption has sometimes been a failure as for the control of the coffee leaf miner \textit{Leucoptera coffeela}~\cite{ambrogi2006efficacy} or for the control of the tomato pest \textit{Tuta absoluta} mentioned above in open field conditions~\cite{michereff2000initial} where mating disruption did not manage to reduce the pest population.
According to~\cite{ambrogi2006efficacy,michereff2000initial}, the failure of the method may be attributed to composition and dosage of the pheromone and/or to a high abundance of insects. 
For mating disruption success, understanding the attraction mechanisms of the pest to the pheromone is important, like the minimum level response, the distance of attraction or the formulation of the pheromone used. Environmental constrains are also crucial factors to take into account. These include the climate, the wind, the crop's foliage, etc. Further, the population size must be accounted for in order to design appropriate control strategies. Thus, planning efficient and cost effective control is a real challenge which can explain the failure of the experiments mentioned above. Mathematical modelling can be very helpful to get a better understanding on the dynamics of the pest population, and various control strategies can be studied to optimise the control. Here we combine mating disruption using female-sex pheromones lures to attract males away from females in order to reduce the mating opportunities adversely affecting the rate reproduction. For more efficient control, lures can be placed in traps to reduce the male population.
In 1955, Knipling proposed a numerical model to assess the effect of the release of sterile males on an insect population where the rate of fertilisation depends on the density of fertile males available for mating~\cite{knipling1955possibilities}. It is worth mentioning that in terms of modelling significant similarities can be found between MAT control and SIT control as the purpose of both methods is to affect the capacity of reproduction of the species. 
In the seminal works of Knipling et al.~\cite{knipling1966population,knipling1979basic}, several approaches for the suppression of insect populations among which MAT and SIT. Further, an overview on the mathematical models for SIT control which of relevance to mating disruption control can be found in~\cite[Chapter 2.5]{dyck2005sterile}.

In this paper we built a generic model for the control of a pest population using mating disruption and trapping to study the effort required to reduce the population size below harmful level. The model is derived using general knowledge or assumptions on insects' biology and ecology. We consider a compartmental approach based on the life cycle and mating behaviour to model the temporal dynamics of the population which is governed by a system of Ordinary Differential Equations (ODE).
A theoretical analysis of the model is carried out to discuss the efficiency of the control using pheromone traps depending on the strength of the lure and the trapping efficacy. In particular we study the properties of the equilibria using the pheromone as a bifurcation parameter. 
We identify two threshold values of practical importance. One corresponds to the minimum amount of lure required to affect the female population equilibrium, while the second one is the threshold above which extinction of the population is achieved. We also show that on small enough populations, at invasion stage for instance, extinction may be achieved with a small amount of lure.
We also show that combining mating disruption with trapping significantly reduces the amount of pheromone needed to obtain a full control of the population. 
The modelling approach for mating disruption control considered in~\cite{barclay1983pheromone,barclay1995models,fisher1985density} 
where MAT control is modelled via  discrete density-dependent models. In the later, the authors identified a threshold value for the amount of pheromones above which the control of an insect population is possible. Similar qualitative results may be found in ~\cite{barclay1980sterile} in the case of SIT control, however not presented as in such depth as in the present study.

In the first section, we give a description of the model without control and analyse it theoretically. In the following section we describe the model with control. To model the impact of the mating disruption, we use a similar approach as the one proposed by Barclay and Van den Driessche~\cite{barclay1983pheromone}, where the amount of the artificial pheromone is given in terms of equivalent number of females. The model is studied theoretically, identifying threshold values which determine changes in the dynamics of the population. Finally, we perform numerical simulations to illustrate the theoretical results and we discuss their biological relevance.

\section{The compartmental model for the dynamics of the insect}
\label{SecNoControl}

We consider a generic model to describe the dynamics of a pest insect population based on biological and behavioural assumptions. For many pest species, such as fruit flies or moths, two main development stages can be considered: the immature stage, denoted $I$, which gathers eggs, larvae and pupae, and the adult stage. Typically, the adult female is the one responsible for causing direct damage to the host when laying her eggs. We split the adult females in two compartments, the females available for mating denoted $Y$, and the fertilised females denoted $F$. We assume that a mating female needs to mate with a male in order to pass into the compartment of the fertilised females and be able to deposit her eggs. Therefore, we also add a male compartment, denoted $M$, to study how the abundance of males impacts the transfer rate from $Y$ to $F$. We make the model sufficiently generic such that multiple mating can occur, which implies that fertilised females can become mating female again.

We denote $r$ the proportion of females emerging from the immature stage and entering the mating females compartment. Thus a proportion of $(1-r)$ on the immature enter the male compartment after emergence. We assume that the time needed for an egg laid to emergence is $1/\nu_I$, thus the transfer rate from $I$ to $Y$ or $M$ is $\nu_I$. Then, when males are in sufficient abundance to ensure the fertilisation of all the females available for mating, the transfer rate from $Y$ to $F$ is $\nu_Y$. However, if males are scarce, and if $\gamma$ is the number of females that can be fertilised by a single male, then only a proportion $\frac{\gamma M}{Y}$ of $Y$-females can pass into the $F$-females compartment. Therefore, the transfer rate from $Y$ to $F$ is modelled by the non-linear term $\nu_Y\min\{\frac{\gamma M}{Y},1\}$. Moreover, fertilised females go back to the mating females compartment with a rate of $\delta$. Further, the fertilised females supply the immature compartment with a rate $b\left(1-\frac{I}{K}\right)$, where $b$ is the intrinsic egg laying rate, while $K$ is the carrying capacity of the hosts. Finally, parameters $\mu_I$, $\mu_Y$, $\mu_F$ are respectively the death rates of compartments $I$, $Y$, $F$ and $M$. The flow diagram of the insects' dynamics is represented in Figure~\ref{Fig_LifeCycle}. 
\begin{figure}[H]
\begin{center}
\includegraphics[width=12cm]{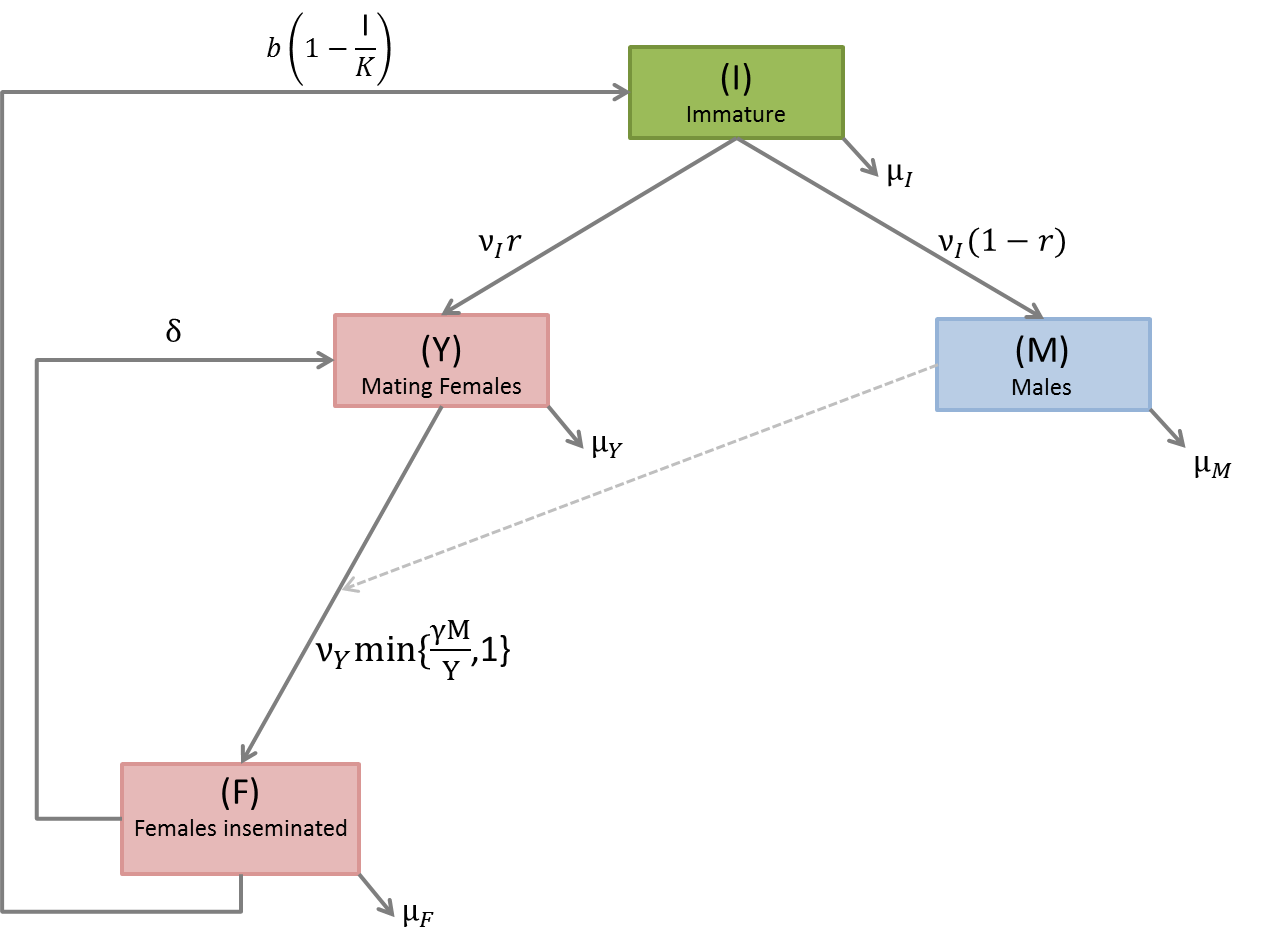}
\caption{Life cycle of the insect.}
\label{Fig_LifeCycle}
\end{center}
\end{figure}
The model is governed by the following system of ODEs:
\begin{equation}
\left\lbrace
	\begin{array}{lcl}
	\frac{dI}{dt} & = & b \left(1-\frac{I}{K} \right)F-\left( \nu_I+\mu_I \right)I, \\
	& &\\
	\frac{dY}{dt} & = & r\nu_I I -\nu_Y\min\{\frac{\gamma M}{Y},1\}Y+\delta F- \mu_{Y} Y, \\
	& &\\
	\frac{dF}{dt} & = & \nu_Y\min\{\frac{\gamma M}{Y},1\}Y -\delta F-\mu_{F}F, \\
	& &\\
	\frac{dM}{dt} & = & (1-r)\nu_I I - (\mu_M)M.
	\end{array}
	\label{EqModTemp0}
	\right.
\end{equation}
The list of the parameters used in the model are summarized in Table~\ref{Tab_ParaValues}. They are taken from~\cite{ekesi2006field} in order to perform numerical experiments.
\begin{table}[H]
\centering
{\begin{tabular}{lllll}
\hline
Parameter & Description & Unit & Value  \\
\hline
b & Intrinsic egg laying rate & female$^{-1}$day$^{-1}$ &  $9.272$\\
r & Female to male ratio & - & $0.57$\\
K & Carrying capacity & - & $1000$ \\
$\gamma$ & Females fertilised by a single male & - & $4$\\
$\mu_I$ & Mortality rate in the $I$ compartment   & day$^{-1}$ & $1/15$\\
$\mu_Y$ & Mortality rate in the $Y$ compartment& day$^{-1}$ & $1/75.1$\\
$\mu_F$ & Mortality rate in the $F$ compartment& day$^{-1}$ & $1/75.1$\\
$\mu_M$ & Mortality rate in the $M$ compartment&  day$^{-1}$ & $1/86.4$\\
$\nu_I$ & Transfer rate from $I$ to $Y$ &  day$^{-1}$ & $1/24.6$\\
$\nu_Y$ & Transfer rate from $Y$ to $F$ &  day$^{-1}$ & $0.5$\\
$\delta$ & Transfer rate from $F$ to $Y$ &  day$^{-1}$ & $0.1$\\
\hline
\end{tabular}}
\caption{List of parameters and there values used in the numerical simulations. }
\label{Tab_ParaValues}
\end{table}

\subsection{Theoretical analysis of the model}
The theoretical analysis of the model is carried out for the case of male abundance and the case of male scarcity. These two cases are separated by the hyperplane $Y=\gamma M$. The analysis of the two systems can be carried out independently on the orthant $\mathbb{R}^4_+$.

\subsubsection{Case 1: Male abundance}
In the case of male abundance, we recover the same model developed and studied in~\cite{dumont2011mathematical}. However, we follow another approach for the theoretical study. The system can be written in the vector form
 \begin{equation}
\frac{dx}{dt}=f(x),
\label{EqModAbundanceVector}
 \end{equation}
  with $x=(I,Y,F,M)^{T}$, and
 \begin{equation}
 f(x)=
 \left(
 \begin{array}{c}
 b\left(1-\frac{I}{K}\right)F-(\nu_I+\mu_I)I\\
 r\nu_I I +\delta F -(\nu_Y+\mu_Y)Y\\
 \nu_Y Y-(\delta+\mu_F)F\\
 (1-r)\nu_I I-\mu_M M
 \end{array}
 \right).
 \end{equation}
 Note that the right hand side of system (\ref{EqModAbundanceVector}), $f(x)$, is continuous and locally Lipschitz, so uniqueness and local existence of the solution are guaranteed. In proving the global properties we will use the fact that the system is cooperative on $\mathbb{R}^4_+$, that is the growth in any compartment impacts positively on the growth of all other compartments. For completeness of the exposition some basics of the theory of cooperative systems is given in Appendix C. Using Theorem \ref{TheoQuasiMon}, the system (\ref{EqModAbundanceVector}) is cooperative on $\Omega_K=\{x\in\mathbb{R}^4_+:I\leq K\}$  because
 the non-diagonal entries of the Jacobian matrix of $f$
 \begin{equation}
 \mathcal{J}_f=
 \left(
 \begin{array}{cccc}
 -\left( \nu_I+\mu_I +b\frac{F}{K} \right)& 0 & b(1-\frac{I}{K})  & 0 \\
 r\nu_I & -(\nu_Y+\mu_Y) & \delta & 0 \\
 0 & \nu_Y & -(\mu_F+\delta) & 0 \\
 (1-r)\nu_I& 0 & 0 & -\mu_M \\
 \end{array}
 \right),
 \end{equation}
are non-negative on $\Omega_K$.

The persistence of a population is typically linked to its Basic Offspring Number. For simple population models the basic offspring number is defined as the number of offspring produced by a single individual in their life time provided abundant resource is available. In general, the definition could be more complicated and its value is computed by using the next generation method. For the model in (\ref{EqModAbundanceVector}) the Basic Offspring Number is
\begin{equation}
\mathcal{N}_{0}=\frac{b r\nu_I \nu_Y}{(\mu_I+\nu_I)((\nu_Y+\mu_Y)(\delta+\mu_F)-\delta \nu_Y)}.
\label{NoMAT_BON}
\end{equation}
In  this paper we will use $\mathcal{N}_{0}$ only as a threshold parameter. The persistence when $\mathcal{N}_{0}>1$ and the extinction when $\mathcal{N}_{0}<1$ are given direct proofs. Hence, we will not discuss the details on specific properties of the number. For completeness of the exposition the computation of (\ref{NoMAT_BON}) is given in Appendix A.

\begin{theorem}
\begin{itemize}
\item[a)] The system of ODE (\ref{EqModAbundanceVector}) defines a positive dynamical system on $\mathbb{R}^4_+$.

\item[b)] If $\mathcal{N}_{0}\leq 1$ then $TE=(0,0,0,0)^T$ is a globally asymptotically stable (GAS) equilibrium.

\item[c)] If $\mathcal{N}_{0}> 1$ then $TE$ is an unstable equilibrium and the system admits a positive equilibrium $EE^*=(I^*,Y^*,F^*,M^*)^T$, where
\begin{eqnarray}
I^* & = & \left( 1-\frac{1}{\mathcal{N}_{0}} \right)K, \nonumber\\
Y^* & = & \frac{r \nu_I (\delta+\mu_F)}{(\nu_Y+\mu_Y)(\delta+\mu_F)-\delta \nu_Y} \left( 1-\frac{1}{\mathcal{N}_{0}} \right)K, \nonumber \\
F^* & = & \frac{r \nu_I \nu_Y}{(\nu_Y+\mu_Y)(\delta+\mu_F)-\delta \nu_Y} \left( 1-\frac{1}{\mathcal{N}_{0}} \right)K, \nonumber \\
M^* & = & \frac{(1-r)\nu_I}{\mu_M} \left( 1-\frac{1}{\mathcal{N}_{0}} \right)K, \nonumber
\end{eqnarray}
which is a globally asymptotically stable (GAS) on $\mathbb{R}^4_+\setminus \{x\in \mathbb{R}^4_+:I=Y=F=0\}$.
\end{itemize}
\label{Prop_PosiDynSysdelta}
\end{theorem}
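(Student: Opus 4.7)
The plan is to handle the three assertions in order, exploiting the fact that $f$ is cooperative on $\Omega_K$ (established in the excerpt) together with the strict sublinearity produced by the $-bIF/K$ term.

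For part (a), I would check positive invariance of $\mathbb{R}^4_+$ by a boundary inspection on each face $\{x_i=0\}$, where the $i$-th component of $f$ reduces to a non-negative combination of the remaining coordinates. I would then promote $\Omega_K=\{x\in\mathbb{R}^4_+:I\le K\}$ to a forward-invariant, globally attracting set using $\dot I = -(\nu_I+\mu_I)K\le 0$ on $\{I=K\}$ together with $\dot I<0$ whenever $I>K$. Boundedness of the remaining coordinates is then automatic, since $Y$, $F$ and $M$ satisfy linear ODEs driven by the bounded input $I$.

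For part (b), the plan is a linear comparison. Writing $f(x)=A_0 x-(b/K)IF\,e_1$ with $A_0=\mathcal{J}_f(0)$ Metzler and $e_1$ the first unit vector, one has $f(x)\le A_0 x$ componentwise on $\Omega_K$, so by the comparison principle for cooperative systems (Theorem \ref{TheoQuasiMon}) $0\le x(t)\le e^{tA_0}x(0)$. The standard next-generation equivalence gives $s(A_0)\le 0\iff \mathcal{N}_0\le 1$, hence exponential decay of the linear upper bound when $\mathcal{N}_0<1$. The critical case $\mathcal{N}_0=1$ is handled by a LaSalle argument with $V(x)=v\cdot x$, where $v\ge 0$ is the left Perron vector of $A_0$: then $\dot V = s(A_0)\,v\cdot x - (v_1 b/K)IF = -(v_1 b/K)IF\le 0$, and the largest invariant subset of $\{\dot V=0\}\cap\Omega_K$ reduces to $TE$.

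For part (c), I would first construct $EE^*$ directly from $f(x)=0$: the relations $M=(1-r)\nu_I I/\mu_M$, $F=\nu_Y Y/(\delta+\mu_F)$, and $Y=r\nu_I(\delta+\mu_F)I/[(\nu_Y+\mu_Y)(\delta+\mu_F)-\delta\nu_Y]$ reduce the system to a single scalar equation in $I$ whose positive root is $I^*=(1-1/\mathcal{N}_0)K$, positive precisely when $\mathcal{N}_0>1$; instability of $TE$ then follows from $s(A_0)>0$. For the GAS claim, the key structural observation is that $M$ does not feed back into the first three equations, so the three-dimensional subsystem $(I,Y,F)$ is autonomous. On the interior of its positive orthant, that subsystem is cooperative with irreducible Jacobian (the transfer graph is strongly connected via $I\to Y$, $Y\to F$, $F\to Y$, $F\to I$) and strictly sublinear, since $b(1-\alpha I/K)\alpha F > \alpha\,b(1-I/K)F$ for $\alpha\in(0,1)$ and $IF>0$ while every other term is linear. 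The Hirsch--Smith convergence theorem for irreducible cooperative strictly sublinear systems with a bounded absorbing set then yields $(I,Y,F)(t)\to(I^*,Y^*,F^*)$ for every initial condition outside $\{I=Y=F=0\}$; the scalar equation $\dot M=(1-r)\nu_I I-\mu_M M$ is then asymptotically autonomous with limit $\dot M=(1-r)\nu_I I^*-\mu_M M$, forcing $M(t)\to M^*$.

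The main obstacle is the GAS of $EE^*$. Two technical points need care: the stratum $\{I=Y=F=0\}$ genuinely attracts to $TE$ (residual dynamics $\dot M=-\mu_M M$), which is why it must be excluded; and for any initial datum outside this stratum one must argue that the orbit enters the interior of the $(I,Y,F)$-orthant in finite time, so that irreducibility and strict sublinearity can be invoked. This last point follows from the strong connectivity of the transfer graph, which propagates any non-zero component to every other coordinate in arbitrarily short time.
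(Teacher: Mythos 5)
Your proposal is correct, but it proves the theorem by a genuinely different route than the paper. The paper's proof is built entirely on the order-interval theorem for cooperative systems (Theorem~\ref{Thm_GAS_MonSys}): for part (a) and (b) it constructs explicit super-equilibria $y_q$ with $f(y_q)\leq \mathbf{0}$ so that $TE$ is the unique equilibrium in $[\mathbf{0},y_q]$ and hence GAS there, and for part (c) it additionally constructs sub-equilibria $z_\varepsilon$ with $f(z_\varepsilon)\geq \mathbf{0}$ so that $EE^*$ is the unique equilibrium in $[z_\varepsilon,y_q]$ and hence GAS on the interior of the absorbing set; the whole argument stays in the full four-dimensional system and uses only the facts quoted in Appendix~C. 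You instead use, for (b), a linear comparison $f(x)\leq \mathcal{J}_f(0)x$ together with the next-generation equivalence $s(\mathcal{J}_f(0))\leq 0\Leftrightarrow \mathcal{N}_0\leq 1$ and a LaSalle argument at criticality, and, for (c), the decoupling of $M$ followed by the Hirsch--Smith convergence theorem for irreducible, cooperative, strictly subhomogeneous systems on the $(I,Y,F)$-subsystem, with $M$ recovered from a scalar linear equation with convergent input. Both routes are sound; the paper's buys self-containedness and explicit Lyapunov-free bounds from elementary super/sub-solutions, while yours buys a cleaner treatment of the boundary (irreducibility forces orbits off the stratum $\{I=Y=F=0\}$ into the interior, which the paper handles more informally) at the cost of importing the threshold equivalence for $\mathcal{N}_0$ and the sublinear-convergence theorem, neither of which is stated in the paper's appendices. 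Two small points to tidy: the comparison principle you need is Theorem~\ref{Thm_Monotonicity}, not Theorem~\ref{TheoQuasiMon} (the latter is only the Jacobian characterisation of cooperativity); and in the LaSalle step the left Perron vector has $v_4=0$ because $\mathcal{J}_f(0)$ is reducible (the $M$-compartment receives but does not send), so you should argue via the irreducible $(I,Y,F)$-block that $v_1>0$ and that the only compact invariant subset of $\{IF=0\}$ is $\{TE\}$.
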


\begin{proof}
a) Let $q\in \mathbb{R}$, $q\geq K$. Denote
\begin{equation}
y_q=
\left(
\begin{array}{c}
K\\
\frac{r\nu_I(\delta+\mu_F) }{(\nu_Y+\mu_Y)(\delta+\mu_F)-\delta \nu_Y}q\\
\frac{r\nu_I \nu_Y }{(\nu_Y+\mu_Y)(\delta+\mu_F)-\delta \nu_Y}q\\
\frac{(1-r)\nu_I }{\mu_M}q\\
\end{array}
\right)
\end{equation}
We have $f(\textbf{0})=\textbf{0}$ and $f(y_q)\leq \textbf{0}$. Then, from Theorem~\ref{Thm_GAS_MonSys} it follows that (\ref{EqModAbundanceVector}) defines a positive dynamical system on $[\textbf{0},y_q]$. That is $\forall x\in [\textbf{0},y_q]$, the problem (\ref{EqModAbundanceVector}), given an initial condition $x(0)=x_0\in [\textbf{0},y_q]$, admits a unique solution for all $t\in[0,\infty]$ in $[\textbf{0},y_q]$~\cite{anguelov2010topological}.
Hence (\ref{EqModAbundanceVector})  defines a positive dynamical system on $\Omega_K=\cup_{q \geq K}[\textbf{0},y_q]$. Let now $x_0\in\mathbb{R}^4_+\setminus\Omega_K$. Using that the vector field defined by $f$ points inwards on the boundary of $\mathbb{R}^4_+$ we deduce that the solution initiated at $x_0$ remains in $\mathbb{R}^4_+$. Then one can see from the first equation of (\ref{EqModAbundanceVector}) that $I(t)$ decreases until the solution is absorbed in $\Omega_K$. Then (\ref{EqModAbundanceVector}) defines a dynamical system on $\mathbb{R}^4_+$ with $\Omega_K$ being an absorbing set.

b)Solving $f(x)=0$ yields two solutions $TE$ and $EE^*$. When $\mathcal{N}_0\leq 1$ $TE$ is the only equilibrium in $\mathbb{R}^4_+$. Then for any $q\geq K$ $TE$ is the only equilibrium in $[\textbf{0},y_q]$. It follows from Theorem~\ref{Thm_GAS_MonSys} that it is GAS on $[\textbf{0},y_q]$. Therefore, $TE$ is GAS on $\Omega_K$ and further on $\mathbb{R}_+^{4}$ by using that $\Omega_K$ is an absorption set.

c) For $\varepsilon\in (0,I^*)$ we consider the vector
\begin{equation}
z_{\varepsilon}=
\left(
\begin{array}{c}
\varepsilon\\
\frac{r\nu_I(\delta+\mu_F) }{(\nu_Y+\mu_Y)(\delta+\mu_F)-\delta \nu_Y}\varepsilon\\
\frac{r\nu_I \nu_Y }{(\nu_Y+\mu_Y)(\delta+\mu_F)-\delta \nu_Y}\varepsilon\\
\frac{(1-r)\nu_I }{\mu_M}\varepsilon\\
\end{array}
\right).
\end{equation}
Straightforward computations show that $\frac{dY}{dt}(z_{\varepsilon})=\frac{dF}{dt}(z_{\varepsilon})=\frac{dM}{dt}(z_{\varepsilon})=0$, and that we have
\begin{equation}
\begin{array}{lcl}
\frac{dI}{dt}(z_{\varepsilon})&=& b \left(1-\frac{\varepsilon}{K} \right)\frac{r\nu_I \nu_Y }{(\nu_Y+\mu_Y)(\delta+\mu_F)-\delta \nu_Y}\varepsilon-\left( \nu_I+\mu_I \right)\varepsilon, \\
&=& \frac{b r\nu_I \nu_Y }{(\nu_Y+\mu_Y)(\delta+\mu_F)-\delta \nu_Y}\varepsilon-\frac{b r\nu_I \nu_Y }{(\nu_Y+\mu_Y)(\delta+\mu_F)-\delta \nu_Y}\frac{\varepsilon^2}{K}-\left( \nu_I+\mu_I \right)\varepsilon\\
&=& (\nu_I+\mu_I)\mathcal{N}_{0}\varepsilon-(\nu_I+\mu_I)\mathcal{N}_{0}\frac{\varepsilon^2}{K}-\left( \nu_I+\mu_I \right)\varepsilon\\
&=& (\nu_I+\mu_I)\left(\mathcal{N}_{0}(1-\frac{\varepsilon}{K})-1 \right)\varepsilon\\
&=& (\nu_I+\mu_I)\frac{\mathcal{N}_0}{K}(I^*-\varepsilon)\varepsilon>0.
\end{array}
\nonumber
\end{equation}
Thus, $f(x_{\varepsilon})\geq\textbf{0}$. For any $\varepsilon\in(0,I^*)$ and $q\in (\varepsilon, K]$ $EE^*$ is a unique equilibrium in the interval $[z_\varepsilon,y_q]$ and hence, by Theorem \ref{Thm_GAS_MonSys}, it is GAS on this interval. Therefore, $EE^*$ is GAS also on $interior(\Omega_K)$. Using further that (i) if either $I(0)$ or $Y(0)$ or $F(0)$ is positive then $x(t)>0$ for all $t>0$ and that (ii) $\Omega_K$ is an absorbing set, we obtain that $EE^*$ attracts all solution in $\mathbb{R}^4_+$  except the ones initiated on the $M$-axis.
\end{proof}

\subsubsection{Case 2: Male scarcity}
Next we consider the case when males are scarce, that is when $\gamma M < Y$. The system assumes the form
\begin{equation}
\left\lbrace
	\begin{array}{lcl}
	\frac{dI}{dt} & = & b \left(1-\frac{I}{K} \right)F-\left( \nu_I+\mu_I \right)I, \\
	& &\\
	\frac{dY}{dt} & = & r\nu_I I -\nu_Y\gamma M+\delta F- \mu_{Y} Y, \\
	& &\\
	\frac{dF}{dt} & = & \nu_Y\gamma M -(\delta +\mu_{F})F, \\
	& &\\
	\frac{dM}{dt} & = & (1-r)\nu_I I - \mu_M M.
	\end{array}
	\right.
	\label{EqModTempScarceNoMAT}
\end{equation}
It is easy to see that the second equation can be decoupled. The system of the remaining $3$ equations is of the form
\begin{equation}
	\frac{du}{dt}=g(u),
	\label{EqModTempScarceNoMAT_NoY}
\end{equation}
where
$
u=\left(
\begin{array}{c}
I\\
F\\
M\\
\end{array}
\right)
$
and
$
g(u)=\left(
\begin{array}{c}
b \left(1-\frac{I}{K} \right)F-\left( \nu_I+\mu_I \right)I\\
\nu_Y\gamma M -(\delta +\mu_{F})F\\
(1-r)\nu_I I - \mu_M M\\
\end{array}
\right)
$.
The non-diagonal entries of the Jacobian of $g$,
$$
\mathcal{J}_g=\left(
\begin{array}{ccc}
-(\nu_I+\mu_I + b\frac{F}{K}) & b \left(1-\frac{I}{K} \right) & 0 \\
0 & -(\delta+\mu_F) & \nu_Y \gamma\\
(1-r)\nu_I & 0 & -\mu_M\\
\end{array}
\right),
$$
are non-negative. Hence the system (\ref{EqModTempScarceNoMAT_NoY}) is cooperative.
The Basic Offspring Number for system (\ref{EqModTempScarceNoMAT_NoY}) is
\begin{equation}
\hat{\mathcal{N}}_0=\frac{b\gamma(1-r)\nu_I\nu_Y}{(\nu_I+\mu_I)(\delta+\mu_F)\mu_M}.
\end{equation}

The following theorem describes the properties of system (\ref{EqModTempScarceNoMAT_NoY}).

\begin{theorem}
\begin{itemize}
\item[a)] The system of ODE (\ref{EqModTempScarceNoMAT_NoY}) defines a positive dynamical system on $\mathbb{R}^3_+$.

\item[b)] If $\hat{\mathcal{N}}_{0}\leq 1$ then $TE=(0,0,0)^T$ is a globally asymptotically stable (GAS) equilibrium.

\item[c)] If $\hat{\mathcal{N}}_{0}> 1$ then $TE$ is an unstable equilibrium and the system admits a positive equilibrium $\hat{EE}=(\hat{I},\hat{F},\hat{M})$, where
\begin{eqnarray}
\hat{I} & = & \left( 1-\frac{1}{\hat{\mathcal{N}}_{0}} \right)K, \nonumber\\
\hat{F} & = & \frac{\gamma(1-r) \nu_I \nu_Y}{(\delta+\mu_F)\mu_M} \left( 1-\frac{1}{\hat{\mathcal{N}}_{0}} \right)K, \nonumber \\
\hat{M} & = & \frac{(1-r)\nu_I}{\mu_M} \left( 1-\frac{1}{\hat{\mathcal{N}}_{0}} \right)K, \nonumber
\end{eqnarray}
which is a globally asymptotically stable (GAS) on $\mathbb{R}^3_+\setminus \{x\in \mathbb{R}^3_+:I=F=0\}$.
\end{itemize}
\label{Thm_ProperEquiNoMATScarce}
\end{theorem}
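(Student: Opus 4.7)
The plan is to mirror the proof of Theorem~\ref{Prop_PosiDynSysdelta}, exploiting that the three-dimensional system (\ref{EqModTempScarceNoMAT_NoY}) is cooperative on $\Omega_K=\{u\in\mathbb{R}^3_+:I\leq K\}$ and has the same bifurcation picture: $TE$ always, and $\hat{EE}$ appearing when $\hat{\mathcal{N}}_0>1$. The key tool is again Theorem~\ref{Thm_GAS_MonSys}, applied on suitable order intervals $[z_\varepsilon,y_q]$ whose endpoints are explicit sub- and super-solutions of $g(u)=\textbf{0}$.

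For part (a) I would construct, for each $q\geq K$, the upper comparison vector
\[
y_q=\Bigl(K,\ \tfrac{\gamma(1-r)\nu_I\nu_Y}{(\delta+\mu_F)\mu_M}\,q,\ \tfrac{(1-r)\nu_I}{\mu_M}\,q\Bigr)^T,
\]
with the $F$- and $M$-coordinates chosen so that the corresponding components of $g(y_q)$ vanish identically, while the $I$-component equals $-(\nu_I+\mu_I)K\leq 0$. Thus $g(\textbf{0})=\textbf{0}$ and $g(y_q)\leq\textbf{0}$, so Theorem~\ref{Thm_GAS_MonSys} yields a positive dynamical system on each interval $[\textbf{0},y_q]$, and the union over $q\geq K$ is $\Omega_K$. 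For $I(0)>K$, the first equation of (\ref{EqModTempScarceNoMAT_NoY}) gives $dI/dt\leq -(\nu_I+\mu_I)I<0$ (since $1-I/K\leq 0$ and $F\geq 0$), so $I$ decreases strictly and the orbit is absorbed into $\Omega_K$; the vector field pointing inward on the faces of $\mathbb{R}^3_+$ then yields a positive dynamical system on all of $\mathbb{R}^3_+$ with $\Omega_K$ absorbing.

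For parts (b) and (c), a direct algebraic check on $g(u)=\textbf{0}$ shows that $TE$ is the only equilibrium in $\mathbb{R}^3_+$ when $\hat{\mathcal{N}}_0\leq 1$, whereas $\hat{EE}$ is the additional interior equilibrium when $\hat{\mathcal{N}}_0>1$. In the first case, Theorem~\ref{Thm_GAS_MonSys} applied on every $[\textbf{0},y_q]$ gives GAS of $TE$, which extends to $\mathbb{R}^3_+$ by the absorption argument. In the second, for each $\varepsilon\in(0,\hat{I})$ I would introduce the lower comparison vector
\[
z_\varepsilon=\Bigl(\varepsilon,\ \tfrac{\gamma(1-r)\nu_I\nu_Y}{(\delta+\mu_F)\mu_M}\,\varepsilon,\ \tfrac{(1-r)\nu_I}{\mu_M}\,\varepsilon\Bigr)^T,
\]
whose $F$- and $M$-components of $g$ vanish and whose $I$-component simplifies, exactly as in the Case~1 calculation, to $(\nu_I+\mu_I)\frac{\hat{\mathcal{N}}_0}{K}(\hat{I}-\varepsilon)\varepsilon>0$. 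Hence $g(z_\varepsilon)\geq\textbf{0}$, $\hat{EE}$ is the unique equilibrium in every $[z_\varepsilon,y_q]$ with $q\geq K$, and Theorem~\ref{Thm_GAS_MonSys} gives its GAS on such intervals; letting $\varepsilon\downarrow 0$ and $q\uparrow\infty$ yields GAS on the interior of $\Omega_K$.

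The main obstacle is extending the basin of attraction from the interior of $\Omega_K$ to $\mathbb{R}^3_+\setminus\{u\in\mathbb{R}^3_+:I=F=0\}$. The idea is that any trajectory with $I(0)>0$ or $F(0)>0$ enters $\mathrm{interior}(\mathbb{R}^3_+)$ after an arbitrarily short time and can then be bracketed by some $[z_\varepsilon,y_q]$: if $I(0)>0$ then $dM/dt=(1-r)\nu_I I(0)-\mu_M M(0)>0$ on the face $M=0$ so $M$ leaves it, and once $M>0$ the equation $dF/dt=\nu_Y\gamma M$ forces $F$ positive; if instead $F(0)>0$ then $dI/dt=b(1-I/K)F>0$ on $I=0$ provides the missing coupling. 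The sub- and super-solution inequalities at the endpoints are routine algebra; it is this boundary bookkeeping, together with the choice of $\varepsilon$ small enough and $q$ large enough to bracket the orbit once it has been nudged into the interior, that requires the most care.
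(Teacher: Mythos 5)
Your proposal is correct and follows essentially the same route as the paper: the same super-solution $\hat{y}_q$ and sub-solution $\hat{z}_\varepsilon$, the same application of Theorem~\ref{Thm_GAS_MonSys} on the order intervals $[\textbf{0},\hat{y}_q]$ and $[\hat{z}_\varepsilon,\hat{y}_q]$, the same absorption argument for $\hat{\Omega}_K$, and the same boundary argument to extend the basin to $\mathbb{R}^3_+\setminus\{I=F=0\}$. The only slip is cosmetic: for $q>K$ the $M$-component of $g(y_q)$ equals $(1-r)\nu_I(K-q)<0$ rather than vanishing identically, but the inequality $g(y_q)\leq\textbf{0}$ that the argument actually uses still holds.
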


\begin{proof}
a) Let $q\in \mathbb{R}$, $q\geq K$. Denote
\begin{equation}
\hat{y}_q=
\left(
\begin{array}{c}
K\\
\frac{\gamma(1-r)\nu_I \nu_Y }{(\delta+\mu_F)\mu_M}q\\
\frac{(1-r)\nu_I }{\mu_M}q\\
\end{array}
\right)
\end{equation}
We have $g(\textbf{0})=\textbf{0}$ and $g(\hat{y}_q)\leq \textbf{0}$. Then, from Theorem~\ref{Thm_GAS_MonSys} it follows that (\ref{EqModTempScarceNoMAT_NoY}) defines a positive dynamical system on $[\textbf{0},\hat{y}_q]$.
Hence it defines a positive dynamical system on $\hat{\Omega}_K=\cup_{q \geq K}[\textbf{0},\hat{y}_q]$. Let now $x_0\in\mathbb{R}^3_+\setminus\hat{\Omega}_K$. Using that the vector field defined by $g$ points inwards on the boundary of $\mathbb{R}^3_+$ we deduce that the solution initiated at $x_0$ remains in $\mathbb{R}^3_+$. Then one can see from the first equation of (\ref{EqModTempScarceNoMAT_NoY}) that $I(t)$ decreases until the solution is absorbed in $\hat{\Omega}_K$. Then (\ref{EqModTempScarceNoMAT_NoY}) defines a dynamical system on $\mathbb{R}^3_+$ with $\hat{\Omega}_K$ being an absorbing set.

b)Solving $g(x)=0$ yields two solutions $TE$ and $\hat{EE}$. When $\hat{\mathcal{N}}_0\leq 1$, $TE$ is the only equilibrium in $\mathbb{R}^3_+$. Then for any $q\geq K$ $TE$ is the only equilibrium in $[\textbf{0},\hat{y}_q]$. It follows from Theorem~\ref{Thm_GAS_MonSys} that it is GAS on $[\textbf{0},\hat{y}_q]$. Therefore, $TE$ is GAS on $\hat{\Omega}_K$ and further on $\mathbb{R}_+^{3}$ by using that $\hat{\Omega}_K$ is an absorbing set.

c) For $\varepsilon\in (0,\hat{I})$ we consider the vector
\begin{equation}
\hat{z}_{\varepsilon}=
\left(
\begin{array}{c}
\varepsilon\\
\frac{\gamma(1-r)\nu_I \nu_Y }{(\delta+\mu_F)\mu_M}\varepsilon\\
\frac{(1-r)\nu_I }{\mu_M}\varepsilon\\
\end{array}
\right).
\end{equation}

Straightforward computations show that $\frac{dF}{dt}(\hat{z}_{\varepsilon})=\frac{dM}{dt}(\hat{z}_{\varepsilon})=0$, and that we have
\begin{equation}
\begin{array}{lcl}
\frac{dI}{dt}(\hat{z}_{\varepsilon})&=& b \left(1-\frac{\varepsilon}{K} \right)\frac{\gamma(1-r)\nu_I \nu_Y }{(\delta+\mu_F)\mu_M}\varepsilon-\left( \nu_I+\mu_I \right)\varepsilon \\
&=& \frac{b \gamma (1-r) \nu_I \nu_Y }{(\delta+\mu_F)\mu_M}\varepsilon-\frac{b \gamma (1-r)\nu_I \nu_Y }{(\delta+\mu_F)\mu_M}\frac{\varepsilon^2}{K}-\left( \nu_I+\mu_I \right)\varepsilon\\
&=& (\nu_I+\mu_I)\hat{\mathcal{N}}_{0}\varepsilon-(\nu_I+\mu_I)\hat{\mathcal{N}}_{0}\frac{\varepsilon^2}{K}-\left( \nu_I+\mu_I \right)\varepsilon\\
&=& (\nu_I+\mu_I)\left(\hat{\mathcal{N}}_{0}(1-\frac{\varepsilon}{K})-1 \right)\varepsilon\\
&=& (\nu_I+\mu_I)\frac{\hat{\mathcal{N}}_0}{K}(\hat{I}-\varepsilon)\varepsilon>0.
\end{array}
\nonumber
\end{equation}
Thus, $g(x_{\varepsilon})\geq\textbf{0}$. For any $\varepsilon\in(0,\hat{I})$ and $q\in (\varepsilon, K]$ $\hat{EE}$ is a unique equilibrium in the interval $[z_\varepsilon,y_q]$ and hence, by Theorem \ref{Thm_GAS_MonSys}, it is GAS on this interval. Therefore, $\hat{EE}$ is GAS also on $interior(\hat{\Omega}_K)$. Using further that (i) if either $I(0)$ or $F(0)$ is positive then $x(t)>0$ for all $t>0$ and that (ii) $\hat{\Omega}_K$ is an absorbing set, we obtain that $\hat{EE}$ attracts all solution in $\mathbb{R}^3_+$  except the ones initiated on the $M$-axis.
\end{proof}

It can be deduced from Theorem \ref{Thm_ProperEquiNoMATScarce} that the non-trivial equilibrium value of $Y$ is
\begin{equation}
\hat{Y}=\frac{r\nu_I(\delta+\mu_F)\mu_M-\nu_Y\gamma (1-r)\nu_I\mu_F}{\mu_Y(\delta+\mu_F)\mu_M}\left( 1-\frac{1}{\hat{\mathcal{N}}_{0}} \right)K.
\end{equation}
We note that the value of $\hat{Y}$ and in general the value of the variable $Y$ can be negative. Hence, the system of ODE (\ref{EqModTempScarceNoMAT}) defines a dynamical system on $\mathbb{R}_+\times\mathbb{R}\times\mathbb{R}^2_+$.

\subsubsection{Conclusions for model (\ref{EqModTemp0})}

In what follows we assume that the population has an endemic equilibrium. Otherwise, no control would be necessary. Further, it is natural to assume that, at equilibrium, there is abundance of males. In terms of the parameters of the model these assumptions can be written as:
\begin{eqnarray}
& 1. & \mathcal{N}_0>1, \label{Assumption1}\\
& 2. & Y^*<\gamma M^*. \label{Assumption2}
\end{eqnarray}
Under the assumptions (\ref{Assumption1}) and (\ref{Assumption2}) we have that  $\hat{EE}>0$ and $\hat{Y}<\gamma \hat{M}$. Indeed, when $\mathcal{N}_0>1$, the inequality
$$\frac{\hat{\mathcal{N}}_0}{\mathcal{N}_0}>1$$
is equivalent to $$\gamma>\frac{r(\delta+\mu_F)\mu_M}{(1-r)((\nu_Y+\mu_Y)(\delta+\mu_F)-\delta\nu_Y)},$$
which is ensured by (\ref{Assumption2}). Therefore under assumptions (\ref{Assumption1}) and (\ref{Assumption2}) we have that $\hat{\mathcal{N}}_0>1$, and by Theorem \ref{Thm_ProperEquiNoMATScarce}, we have that the system (\ref{EqModTempScarceNoMAT}) has a non-trivial equilibrium, which is globally asymptotically stable on $\mathbb{R}_+\times\mathbb{R}\times\mathbb{R}^2_+\setminus\{(I,Y,F,M)^T:I=F=0\}$.

Further,  under (\ref{Assumption2}), we have
\begin{eqnarray}
\hat{Y}-\gamma\hat{M} & = & \hat{I}\left(\frac{r\nu_I(\delta+\mu_F)\mu_M-\nu_Y\gamma(1-r)\nu_I\mu_F}{\mu_Y(\delta+\mu_F)\mu_M}-\gamma\frac{(1-r)\nu_I}{\mu_M}\right) \nonumber\\
& = &\frac{\nu_I \hat{I}}{\mu_M}\left(\frac{r \mu_M}{\mu_Y}-\gamma(1-r)\frac{\nu_Y\mu_F+\mu_Y(\delta+\mu_F)}{\mu_Y(\mu_F+\delta)}\right) \nonumber \\
& < &\frac{\nu_I \hat{I}}{\mu_M}\left(\frac{r \mu_M}{\mu_Y}-\frac{r(\delta+\mu_F)\mu_M}{((\nu_Y+\mu_Y)(\delta+\mu_F)-\delta\nu_Y)}\frac{\nu_Y\mu_F+\mu_Y(\delta+\mu_F)}{\mu_Y(\delta+\mu_F)}\right) \nonumber \\
& < &\frac{\nu_I \hat{I}}{\mu_M}\left(\frac{r \mu_M}{\mu_Y}-\frac{r \mu_M}{\mu_Y}\right)=0. \nonumber\\
\end{eqnarray}

To summarize, under assumptions (\ref{Assumption1}) and (\ref{Assumption2}), the globally asymptotically stable equilibria of both (\ref{EqModAbundanceVector}) and  (\ref{EqModTempScarceNoMAT}) are in the male abundance region defined via $Y<\gamma M$. Hence the only non-trivial equilibrium of (\ref{EqModTemp0}) is $EE^*=(I^*,Y^*,F^*,M^*)^T$. In this way we obtain the following result:
\begin{theorem}\label{theoSummaryNoMat}
Given (\ref{Assumption1}) and (\ref{Assumption2}), the model (\ref{EqModTemp0}) has two equilibria:
\begin{itemize}
\item[a)] $TE$ which is unstable, and
\item[b)] $EE^*$ which is asymptotically stable.
\end{itemize}
\end{theorem}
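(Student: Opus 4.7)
The plan is to exploit the piecewise structure of (\ref{EqModTemp0}): the vector field coincides with that of (\ref{EqModAbundanceVector}) on $\Omega_a=\{Y\leq \gamma M\}\cap \mathbb{R}^4_+$ (male abundance) and with that of (\ref{EqModTempScarceNoMAT}) on $\Omega_s=\{Y\geq \gamma M\}\cap \mathbb{R}^4_+$ (male scarcity), the two expressions agreeing along the switching interface $Y=\gamma M$. Consequently, any equilibrium of (\ref{EqModTemp0}) must be a zero of one of the two subsystem fields lying in the associated region, and local stability at a point sitting in the interior of either region can be read off from the corresponding subsystem.

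First I would enumerate the equilibria. By Theorem \ref{Prop_PosiDynSysdelta} the abundance system has exactly the two zeros $TE$ and $EE^*$ in $\mathbb{R}^4_+$; assumption (\ref{Assumption2}) places $EE^*$ in the interior of $\Omega_a$, so it is a genuine equilibrium of (\ref{EqModTemp0}). For the scarcity system, Theorem \ref{Thm_ProperEquiNoMATScarce} produces $TE$ together with $\hat{EE}$, but the estimate $\hat{Y}<\gamma \hat{M}$ derived just before the theorem statement shows $\hat{EE}\notin \Omega_s$, so it fails to lift to an equilibrium of the full model. Any further candidate on the switching manifold $\{Y=\gamma M\}$ would necessarily be a common zero of both subsystem fields, and the only such point is $TE$. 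Hence (\ref{EqModTemp0}) admits exactly $TE$ and $EE^*$.

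For the stability claims, since $EE^*$ lies in the open region $\{Y<\gamma M\}$, there is a neighborhood on which (\ref{EqModTemp0}) is identical to the smooth system (\ref{EqModAbundanceVector}); Theorem \ref{Prop_PosiDynSysdelta}(c) then yields asymptotic stability of $EE^*$ for the full model. For $TE$, the inequality $\hat{\mathcal{N}}_0>1$ established under (\ref{Assumption1})-(\ref{Assumption2}) combined with Theorem \ref{Thm_ProperEquiNoMATScarce}(c) gives instability of $TE$ for the scarcity subsystem; initial conditions in $\Omega_s$ arbitrarily close to the origin (e.g.\ $Y=0$, $M>0$ small) therefore generate trajectories of (\ref{EqModTemp0}) that leave a fixed neighborhood of $TE$, which is exactly the instability required.

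The substantive analytical work has already been carried out in Theorems \ref{Prop_PosiDynSysdelta} and \ref{Thm_ProperEquiNoMATScarce} and in the algebraic manipulations preceding the theorem statement; the main point of care is the switching manifold, which is why I would verify explicitly both that $\hat{EE}$ lies outside $\Omega_s$ and that no spurious equilibria appear along $\{Y=\gamma M\}$. Once these checks are in place, the theorem follows by a direct assembly of the subsystem results.
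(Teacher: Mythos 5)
Your overall strategy is the same as the paper's: the paper proves this theorem by exactly the computations preceding it, namely that under (\ref{Assumption1})--(\ref{Assumption2}) one has $\hat{\mathcal{N}}_0>1$ and $\hat{Y}<\gamma\hat{M}$, so both candidate nontrivial equilibria of the two subsystems sit in the male-abundance region, whence $EE^*$ is the only nontrivial equilibrium of (\ref{EqModTemp0}) and the stability labels are inherited from Theorems~\ref{Prop_PosiDynSysdelta} and~\ref{Thm_ProperEquiNoMATScarce}. Your enumeration of equilibria, including the check along the switching manifold, and your transfer of the asymptotic stability of $EE^*$ (a local property, read off from the smooth system (\ref{EqModAbundanceVector}) which coincides with (\ref{EqModTemp0}) on a neighbourhood of $EE^*$ since $Y^*<\gamma M^*$ is strict) are both fine and match the paper.

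There is, however, a concrete error in your instability argument for $TE$. The witness you propose, $Y=0$ and $M>0$ small, satisfies $Y=0<\gamma M$ and therefore lies in the male-\emph{abundance} region $\Omega_a$, not in $\Omega_s=\{Y\geq\gamma M\}$; the instability of $TE$ for the scarcity subsystem says nothing about that initial condition. A point of $\Omega_s$ near the origin would instead have $M=0$ and $Y>0$, but such points converge to $TE$ (they lie on the face where no fertilisation occurs until $M$ builds up), so they are a poor choice anyway. The natural repair is to use the abundance subsystem: the point $z_\varepsilon$ from the proof of Theorem~\ref{Prop_PosiDynSysdelta}(c) satisfies $Y_\varepsilon/M_\varepsilon=Y^*/M^*<\gamma$ by (\ref{Assumption2}), hence lies in $\Omega_a$, and $f(z_\varepsilon)\geq 0$ forces the solution to move away from $TE$. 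Even then there is a subtlety you (and, to be fair, the paper) leave implicit: instability for a subsystem only yields instability for the piecewise model if the escaping trajectory remains in that subsystem's region until it has exited a fixed neighbourhood of $TE$, since the moment it crosses $\{Y=\gamma M\}$ it is governed by the other vector field. For the $z_\varepsilon$ argument this can be handled (while the solution stays in $\Omega_a$ it is nondecreasing and bounded below by $z_\varepsilon$, and one checks it cannot return into any ball of radius smaller than $\|z_\varepsilon\|$ about the origin), but as written your proof does not address it.
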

The equilibrium $EE*$ attracts 
solutions which are entirely in the male abundance region, excluding the $M$-axis. Solutions in the male scarcity region are attracted to $\hat{EE}=(\hat{I},\hat{Y},\hat{F},\hat{M})^T$. Hence they leave the male scarcity region and enter the male abundance region. In the male abundance region they are governed by (\ref{EqModAbundanceVector}) and, therefore, attracted to $EE^*$. In general this reasoning does not exclude the possibility that a solution may leave the male abundance region, enter the male scarcity region and then leave it. However, our numerical simulations show that this happens only finite number of times, indicating that EE* is globally asymptotically stable on $\mathbb{R}_+^4\setminus\{x\in\mathbb{R}_+^4:I=F=0\}$. Hence, eventually such solution stays in the male abundance region and therefore converges to $EE^*$.


\section{Modelling mating disruption and trapping}
\label{SecWithControl}

\subsection{Mating disruption and trapping}
In order to maintain the pest population to a low level, we consider a control using female-pheromone-traps to disrupt male mating behaviour. More precisely, we take into account two aspects for the control. The first aspect consists of disturbing the mating between males and females to reduce the fertilisation opportunities, which in turn, reduces the number of offspring. This is done using traps that are releasing a female pheromone lure to which males are attracted. This leads to a reduction in the number of males available for mating near the females, and decreases the opportunity for fertilisation. The efficiency of mating disruption depends on the strength of the lure or on the number of traps in an area. The second aspect of the control is the trapping potential of the trap. We assume that the lure traps also contain an insecticide which can kill the captured insects.

\subsection{The model}
In order to take in account the effect of the lures, we consider the approach proposed by Barclay and Van den Driessche~\cite{barclay1983pheromone,barclay2014models}. That is, the strength of the lure is represented as the quantity of pheromones released by an equivalent number of wild females. Thus, in the model the effect of the lure corresponds to the attraction of $Y_P$ additional females. In such a setting, the total number of ``females'' attracting males is $Y+Y_P$~\cite{barclay1983pheromone}. In particular, this means that males have a probability of $\displaystyle\frac{Y}{Y+Y_P}$ to be attracted to wild females, and a probability of $\displaystyle\frac{Y_P}{Y+Y_P}$ to be attracted to the pheromone traps.
Denote $\gamma$ the number of females that can be inseminated by a single male. Then, the transfer rate from $Y$ to $F$ does not exceed $\nu_Y\displaystyle\frac{\gamma M}{Y+Y_P}$. When $\displaystyle\frac{\gamma M}{Y+Y_P}>1$ the population is in a male abundance state and the transfer rate is $\nu_Y$. However, when $\displaystyle\frac{\gamma M}{Y+Y_P}<1$, then the population is in a male scarcity state and the transfer rate is $\nu_Y\displaystyle\frac{\gamma M}{Y+Y_P}$. Altogether, the transfer rate is $\nu_Y\min\{\frac{\gamma M}{Y+Y_P},1\}$.

The parameter $\alpha$ represents the death or capture rate for the fraction $\displaystyle\frac{Y_P}{Y+Y_P}$ of the males which are attracted by the lures. The flow diagram is represented in Figure \ref{fig_MATmodel} which yields the following system of ODEs:
\begin{equation}
\left\lbrace
	\begin{array}{lcl}
	\frac{dI}{dt} & = & b \left(1-\frac{I}{K} \right)F-\left( \nu_I+\mu_I \right)I, \\
	& &\\
	\frac{dY}{dt} & = & r\nu_I I -\nu_Y\min\{\frac{\gamma M}{Y+Y_P},1\}Y+\delta F- \mu_{Y} Y, \\
	& &\\
	\frac{dF}{dt} & = & \nu_Y\min\{\frac{\gamma M}{Y+Y_P},1\}Y -\delta F-\mu_{F}F, \\
	& &\\
	\frac{dM}{dt} & = & (1-r)\nu_I I - (\mu_M+\alpha\frac{Y_P}{Y+Y_P})M.
	\end{array}
	\label{EqModTemp}
	\right.
\end{equation}

\begin{figure}[H]
\centering
\includegraphics[width=12cm]{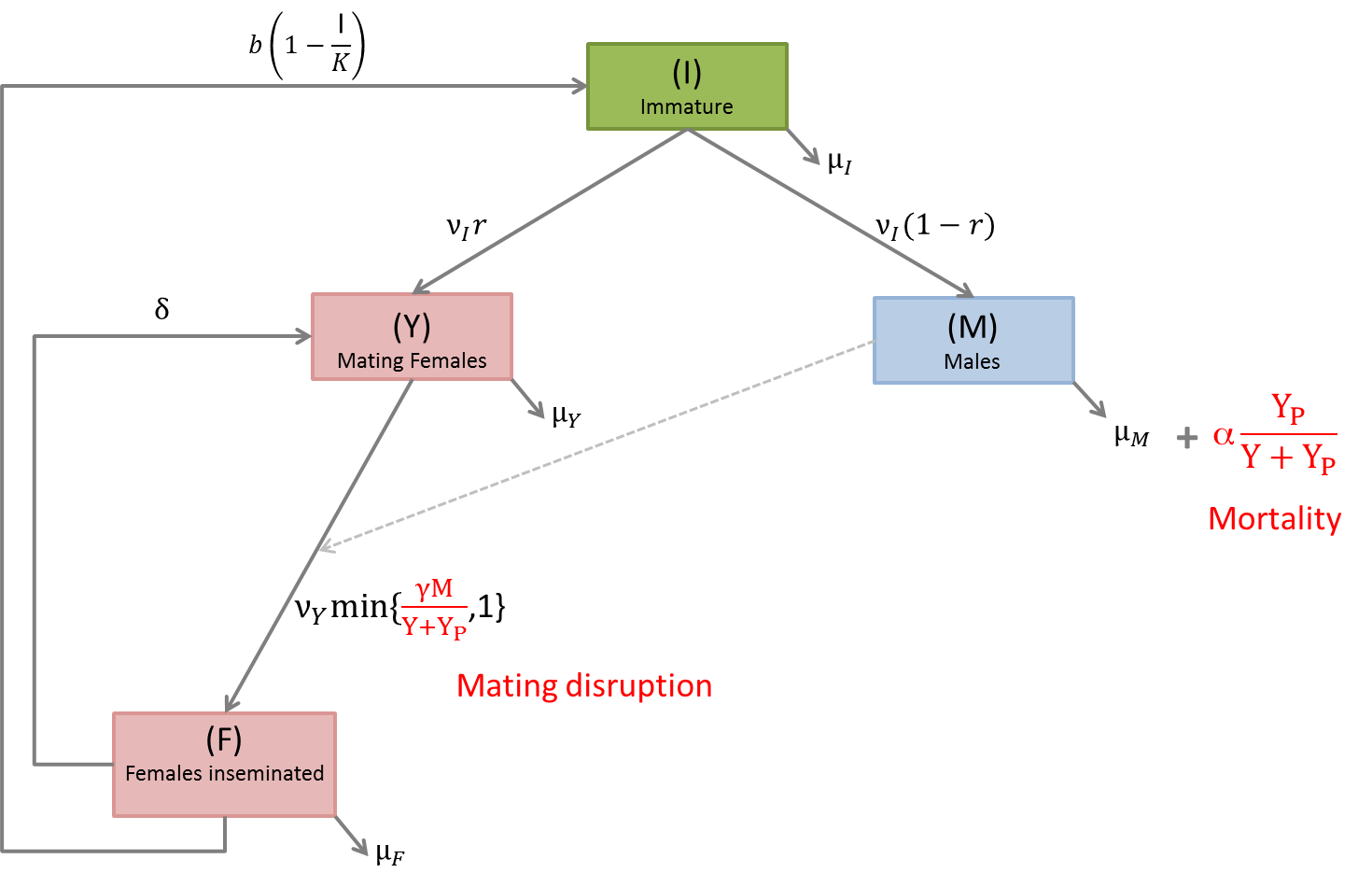}
\caption{Control model using mating disruption and trapping.}
\label{fig_MATmodel}
\end{figure}

\subsection{Theoretical analysis of the control model}
The aim of this section is to investigate the existence of equilibria of model (\ref{EqModTemp}) and their asymptotic properties. First we consider the model in the male abundance state and in the male scarcity state independently. Then we draw conclusions for the general model.

\subsubsection{Properties of the equilibria in the male abundance region ($\gamma M>Y+Y_P$)}\label{SubsubMaleAbund}

The dynamics of the population in the male abundance region $Y+Y_P<\gamma M$ are governed by the following system of ODEs:
\begin{equation}
\left\lbrace
	\begin{array}{lcl}
	\frac{dI}{dt} & = & b \left(1-\frac{I}{K} \right)F-\left( \nu_I+\mu_I \right)I, \\
	& &\\
	\frac{dY}{dt} & = & r\nu_I I -\nu_YY+\delta F- \mu_{Y} Y, \\
	& &\\
	\frac{dF}{dt} & = & \nu_YY -(\delta +\mu_{F})F, \\
	& &\\
	\frac{dM}{dt} & = & (1-r)\nu_I I - (\mu_M+\alpha\frac{Y_P}{Y+Y_P})M.
	\end{array}
	\label{EqModTempAbundance}
		\right.
\end{equation}
We note that the first three equations in (\ref{EqModTempAbundance}) as in (\ref{EqModAbundanceVector}) are the same, while the fourth equation in both systems can be decoupled. Then, using exactly the same method as in Theorem \ref{Prop_PosiDynSysdelta} we obtain the following theorem.

\begin{theorem}\begin{itemize}\item[a)] The system of ODEs (\ref{EqModTempAbundance}) defines a positive dynamical system on $\mathbb{R}_+^4$.
\item[b)] Under assumptions (\ref{Assumption1}) and (\ref{Assumption2}), the system has a positive equilibrium $EE^\#=(I^*,Y^*,F^*,M^\#(Y_P))^T$, where
\[
M^{\#}(Y_P)=\frac{(1-r)\nu_I (Y^*+Y_P)}{\mu_M(Y^*+Y_P)+\alpha Y_P}I^*=\frac{M^*}{1+\frac{\alpha Y_P}{\mu_M(Y^*+Y_P)}},
\]
which is globally asymptotically stable on $\mathbb{R}^4_+\setminus \{x\in\mathbb{R}^4_+:I=Y=F=0\}$.
\end{itemize}
\label{Thm_EquilibriumMaleAbund_withMAT}
\end{theorem}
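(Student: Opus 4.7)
The plan is to mirror the proof of Theorem \ref{Prop_PosiDynSysdelta} almost verbatim, exploiting the structural observation already highlighted in the statement: the first three equations of (\ref{EqModTempAbundance}) are identical to those of (\ref{EqModAbundanceVector}), and the fourth is a scalar ODE in $M$ driven by $I$ and $Y$. Before running the cooperative-systems machinery I would verify that (\ref{EqModTempAbundance}) remains cooperative on $\Omega_K=\{x\in\mathbb{R}^4_+:I\le K\}$: the only new off-diagonal Jacobian entry, compared with the uncontrolled case, is
\[
\frac{\partial}{\partial Y}\Bigl(\frac{dM}{dt}\Bigr)=\frac{\alpha Y_P}{(Y+Y_P)^2}M\ge 0,
\]
so the sign pattern required by Theorem \ref{TheoQuasiMon} is preserved.

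For part (a) I would reuse the same upper bound $y_q$ ($q\ge K$) as in Theorem \ref{Prop_PosiDynSysdelta}(a). Its first three components cap the first three components of $f$ by the same algebra as before, and for the fourth component the trapping term only inflates the effective death rate of $M$ above $\mu_M$, giving $(dM/dt)(y_q)\le (1-r)\nu_I K-\mu_M\,\tfrac{(1-r)\nu_I}{\mu_M}q\le 0$. Thus $f(\mathbf{0})=\mathbf{0}$ and $f(y_q)\le\mathbf{0}$, so Theorem \ref{Thm_GAS_MonSys} gives a positive dynamical system on $[\mathbf{0},y_q]$, and the unioning and $I$-absorbing arguments from Theorem \ref{Prop_PosiDynSysdelta}(a) then extend this to all of $\mathbb{R}^4_+$.

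For part (b), solving $f(x)=0$ reduces to the same $(I,Y,F)$ subsystem as in the uncontrolled case, whose unique positive solution under (\ref{Assumption1}) is $(I^*,Y^*,F^*)$; substituting into $dM/dt=0$ then yields $M^\#(Y_P)$, with the two equivalent expressions stated in the theorem immediate from $M^*=(1-r)\nu_I I^*/\mu_M$. For global asymptotic stability I would, for $\varepsilon\in(0,I^*)$, take $z_\varepsilon$ with the same first three components as in Theorem \ref{Prop_PosiDynSysdelta}(c), but with fourth component
\[
M_\varepsilon=\frac{(1-r)\nu_I\,\varepsilon}{\mu_M+\alpha Y_P/(Y_\varepsilon+Y_P)},
\]
where $Y_\varepsilon$ is the (linearly scaled) second component of $z_\varepsilon$. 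By construction $dM/dt(z_\varepsilon)=0$; the middle two components $dY/dt(z_\varepsilon)=dF/dt(z_\varepsilon)=0$ by the same algebra as in the uncontrolled proof; and $dI/dt(z_\varepsilon)=(\nu_I+\mu_I)\frac{\mathcal{N}_0}{K}(I^*-\varepsilon)\varepsilon>0$ by an identical computation. Hence $f(z_\varepsilon)\ge\mathbf{0}$, $EE^\#$ is the unique equilibrium in each order interval $[z_\varepsilon,y_q]$ with $\varepsilon<q\le K$, Theorem \ref{Thm_GAS_MonSys} gives GAS of $EE^\#$ on such intervals, and taking unions extends GAS to the interior of $\Omega_K$. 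Exactly as in Theorem \ref{Prop_PosiDynSysdelta}(c), positivity of trajectories with at least one of $I(0),Y(0),F(0)$ positive together with the absorbing property of $\Omega_K$ then extends GAS to $\mathbb{R}^4_+\setminus\{x:I=Y=F=0\}$.

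The only step requiring genuine care is the choice of the fourth coordinate of $z_\varepsilon$: the naive linear scaling $M=(1-r)\nu_I\varepsilon/\mu_M$ used in the uncontrolled proof no longer kills the $dM/dt$ equation because of the trapping term $\alpha Y_P/(Y+Y_P)$. The prescription above repairs this, but one must still verify the order relations $z_\varepsilon\le EE^\#\le y_q$; for the $M$-coordinate this follows from monotonicity of $M_\varepsilon$ in $\varepsilon$ (the numerator grows linearly while the denominator shrinks as $Y_\varepsilon$ grows) together with $M^\#(Y_P)\le M^*\le (1-r)\nu_I q/\mu_M$. Once these order relations are in place, every remaining step is an exact transcription of the argument of Theorem \ref{Prop_PosiDynSysdelta}.
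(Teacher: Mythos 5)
Your proposal is correct and follows essentially the same route the paper intends: the paper gives no written proof for Theorem~\ref{Thm_EquilibriumMaleAbund_withMAT}, merely observing that the first three equations coincide with those of (\ref{EqModAbundanceVector}), that the $M$-equation decouples, and that ``exactly the same method as in Theorem~\ref{Prop_PosiDynSysdelta}'' applies. You correctly identify and repair the one place where a verbatim transcription fails --- the fourth coordinate of the subsolution $z_\varepsilon$ must be taken as $(1-r)\nu_I\varepsilon/\bigl(\mu_M+\alpha Y_P/(Y_\varepsilon+Y_P)\bigr)$ rather than $(1-r)\nu_I\varepsilon/\mu_M$ --- and your verification of cooperativity and of the order relations $z_\varepsilon\le EE^\#\le y_q$ is sound.
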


The equilibrium $EE^\#$ is an equilibrium of (\ref{EqModTemp}) if and only if
\[
Y^*+Y_P<\gamma M^\#(Y_P)
\]
or, equivalently,
\begin{equation}
Y_P<Y_P^*:=\frac{\gamma M^*-Y^*}{1+\frac{\alpha}{\mu_M}}=\frac{1}{\mu_M+\alpha}\left(\gamma (1-r)\nu_I-\frac{r \nu_I (\delta+\mu_F)\mu_M}{(\nu_Y+\mu_Y)(\delta+\mu_F)-\delta \nu_Y}\right)\left(1-\frac{1}{\mathcal{N}_0}\right)K.
\label{YPstar}
\end{equation}
The threshold value $Y_P^*$ determines the minimal level of control below which the control has essentially no effect on an established pest population. More precisely, the effect is limited to reducing the number of males, while all other compartments remain in their natural equilibrium.

\subsubsection{Properties of the equilibria in the male scarcity region ($\gamma M<Y+Y_P$)}\label{SubsubMaleScare}

In the region of male scarcity $\gamma M\leq Y+Y_P$ the dynamics of the population is governed by the system:

\begin{equation}
\left\lbrace
	\begin{array}{lcl}
	\frac{dI}{dt} & = & b \left(1-\frac{I}{K} \right)F-\left( \nu_I+\mu_I \right)I, \\
	& &\\
	\frac{dY}{dt} & = & r\nu_I I -\nu_Y\frac{\gamma MY}{Y+Y_P}+\delta F- \mu_{Y} Y, \\
	& &\\
	\frac{dF}{dt} & = & \nu_Y\frac{\gamma MY}{Y+Y_P} -(\delta +\mu_{F})F, \\
	& &\\
	\frac{dM}{dt} & = & (1-r)\nu_I I - (\mu_M+\alpha\frac{Y_P}{Y+Y_P})M.
	\end{array}
	\label{EqModTempScarce}
		\right.
\end{equation}
The following theorem exhibits different behaviours of the model depending on the value of $Y_P$ which can be interpreted as the effort of the mating disruption control.  

\begin{theorem}\label{theoScarceWithMat}
\begin{itemize}
\item[a)] The system of ODEs (\ref{EqModTempScarce}) defines a positive dynamical system on $\mathbb{R}^4_+$.
\item[b)] TE is an asymptotically stable equilibrium of this system.
\item[c)] There exists a threshold value $Y_P^{**}$ of $Y_P$ such that
\begin{itemize}
\item[i)] if $Y_P>Y_P^{**}$ the only equilibrium of the system on $\mathbb{R}^4_+$ is TE;
\item[ii)] if $0<Y_P<Y_P^{**}$ the system has three biologically relevant equilibria on $\mathbb{R}^4_+$, TE and two positive equilibria.
\end{itemize}
\end{itemize}
\label{Thm_SystemWithMATmaleScarcity}
\end{theorem}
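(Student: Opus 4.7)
\emph{Parts (a) and (b).} Since $Y_P>0$, the right-hand side of (\ref{EqModTempScarce}) is $C^1$ on $\mathbb{R}^4_+$, so local existence and uniqueness are standard. Positive invariance of $\mathbb{R}^4_+$ follows from inspecting the vector field on each coordinate face, e.g.\ $\dot{Y}|_{Y=0}=r\nu_I I+\delta F\ge 0$ (the mating term vanishes at $Y=0$). Unlike the uncontrolled model, this system is not cooperative ($\partial\dot{Y}/\partial M<0$), so in place of Theorem~\ref{Thm_GAS_MonSys} I would construct a bounded absorbing box by the successive linear comparisons $\dot{I}\le -(\nu_I+\mu_I)I$ when $I\ge K$, then $\dot{M}\le(1-r)\nu_I I-\mu_M M$, then $\dot{F}\le\nu_Y\gamma M-(\delta+\mu_F)F$, then $\dot{Y}\le r\nu_I I+\delta F-\mu_Y Y$. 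For (b), the nonlinear term $\nu_Y\gamma MY/(Y+Y_P)$ is quadratic in $(Y,M)$ near the origin (since the denominator tends to $Y_P>0$), so it does not contribute to the linearisation. The Jacobian at $TE$ is then a sparse matrix whose characteristic polynomial factors, after expanding along the $M$-column and then the $F$-row, as $(\lambda+\nu_I+\mu_I)(\lambda+\mu_Y)(\lambda+\delta+\mu_F)(\lambda+\mu_M+\alpha)$, giving four strictly negative eigenvalues and asymptotic stability.

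\emph{Part (c): reduction to a scalar bifurcation equation.} The plan is to reduce the four equilibrium equations to a one-dimensional equation in $u:=I/K$. From $\dot{I}=0$ one gets $F=(\nu_I+\mu_I)I/(b(1-u))$; adding $\dot{Y}=\dot{F}=0$ eliminates the mating term and gives $\mu_Y Y+\mu_F F=r\nu_I I$, which expresses $Y$ as an explicit function $Y(u)$. Solving $\dot{M}=0$ for $M$ and substituting into $\dot{F}=0$, after dividing through by $(\delta+\mu_F)(\nu_I+\mu_I)\mu_M$ and recognising $\hat{\mathcal{N}}_0$, reduces the equilibrium condition to
\[
Y_P=\Phi(u):=\frac{\mu_M\,Y(u)\,(\hat{\mathcal{N}}_0(1-u)-1)}{\mu_M+\alpha}=C\cdot\frac{u\,(u_{\max}-u)\,(v_{\max}-u)}{1-u},
\]
with $C>0$ an explicit constant, $u_{\max}:=1-\mu_F(\nu_I+\mu_I)/(br\nu_I)$ (the non-trivial zero of $Y(u)$) and $v_{\max}:=1-1/\hat{\mathcal{N}}_0$. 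On the admissible interval $[0,u_0]$ with $u_0:=\min\{u_{\max},v_{\max}\}$, $\Phi$ is continuous, vanishes at both endpoints and is strictly positive inside. Setting $Y_P^{**}:=\max_{[0,u_0]}\Phi>0$, case (i) is immediate, and in case (ii) the intermediate value theorem supplies at least two positive roots of $\Phi(u)=Y_P$, each yielding by construction a genuine positive equilibrium of (\ref{EqModTempScarce}).

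\emph{The hard part.} The remaining difficulty is to show that (ii) produces \emph{exactly} two positive equilibria, i.e.\ that $\Phi$ is unimodal on $[0,u_0]$. Writing $\Phi=CP/(1-u)$ with cubic $P(u)=u(u_{\max}-u)(v_{\max}-u)$, the numerator of $\Phi'$ is $N(u):=P'(u)(1-u)+P(u)$, and a short calculation gives $N'(u)=P''(u)(1-u)$. Since $P''$ is linear with a unique root at $(u_{\max}+v_{\max})/3$, $N$ is first-decreasing-then-increasing, and the endpoint values $N(0)=u_{\max}v_{\max}>0$ and $N(u_0)=u_0(u_0-\max\{u_{\max},v_{\max}\})(1-u_0)\le 0$ force $N$ to have exactly one sign change in $(0,u_0)$. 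This is the only step requiring genuine computation; everything else is structural.
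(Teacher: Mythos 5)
Your proposal is correct, and it shares the paper's overall skeleton: for (b) you linearise at $TE$, correctly noting that the mating term $\nu_Y\gamma MY/(Y+Y_P)$ is quadratic near the origin because $Y_P>0$, and you obtain the same four negative eigenvalues $-(\nu_I+\mu_I)$, $-\mu_Y$, $-(\delta+\mu_F)$, $-(\mu_M+\alpha)$ as the paper; for (c) you perform the same reduction, expressing $F$, $M$, $Y$ in terms of $I$ from $\dot I=0$, $\dot M=0$ and $\dot Y+\dot F=0$ and substituting into $\dot F=0$. Where you genuinely diverge is in the analysis of the resulting scalar equation. The paper keeps it in the form $\psi(I)=\eta(Y_P,I)$ of (\ref{EqPoly}) -- a fixed cubic met by a pencil of lines through $(K,0)$ -- defines $Y_P^{**}$ as the tangency value, and reads the count of intersections off Figure~\ref{Fig_GraphPolyIntersectOrigMod}; the ``exactly two'' claim is asserted graphically. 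You instead divide through to get $Y_P=\Phi(u)$ with $\Phi=CP/(1-u)$, set $Y_P^{**}=\max_{[0,u_0]}\Phi$ (equivalent to the paper's tangency value), and then prove unimodality of $\Phi$ via $N=P'(1-u)+P$, $N'=P''(1-u)$ and the endpoint signs $N(0)>0\geq N(u_0)$. This is the real added value of your write-up: it turns the paper's picture-based count of positive equilibria into a two-line calculus argument, and your identification of the scaled roots $u_{\max}=I_2/K$ and $v_{\max}=1-1/\hat{\mathcal{N}}_0=I_1/K$ checks out against (\ref{Roots_psi}). For part (a) you also replace the paper's appeal to the upper-bounding monotone system used in Theorem~\ref{Thm_GAS_TE} by a direct cascade of linear differential inequalities $I\to M\to F\to Y$; both routes deliver the required a priori bounds, yours being self-contained and the paper's reusing machinery it needs later anyway.
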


\begin{proof}
a) The local existence of the solutions of (\ref{EqModTempScarce}) follows from the fact that the right hand side is Lipschitz continuous in $\mathbb{R}^4_+$. To obtain global existence it is enough to show that every solution is bounded. This can be proved directly, but it also follows from the upper approximation of the solutions discussed in the proof of Theorem~\ref{Thm_GAS_TE}. Hence, it is omitted here.

b) It is clear that $TE=(0,0,0,0)^{T}$ is an equilibrium.
The Jacobian of the right hand side of system (\ref{EqModTempScarce}) is
\begin{equation}
\mathcal{J}_h(x)=
\left(
\begin{array}{cccc}
-\left( \nu_I+\mu_I +b\frac{F}{K} \right)& 0 & b \left(1-\frac{I}{K} \right) & 0 \\
r\nu_I & -(\nu_Y\gamma M\frac{Y_P}{(Y+Y_P)^2}+\mu_Y) & \delta & -\nu_Y\gamma \frac{Y}{(Y+Y_P)} \\
0 & \nu_Y \gamma M \frac{Y_P}{(Y+Y_P)^2} & -(\mu_F+\delta) & \nu_Y\gamma\frac{Y}{(Y+Y_P)} \\
(1-r)\nu_I& \frac{\alpha Y_P}{(Y+Y_P)^2}M & 0 & -(\mu_M+\alpha\frac{Y_P}{Y+Y_P}) \\
\end{array}
\right),
\label{Eq_JacobianMatrix}
\end{equation}
thus,
\begin{equation}
\mathcal{J}_h(TE)=
\left(
\begin{array}{cccc}
-\left( \nu_I+\mu_I \right)& 0 & b & 0 \\
r\nu_I & -\mu_Y & \delta & 0 \\
0 & 0 & -(\delta+\mu_F) & 0\\
(1-r)\nu_I& 0 & 0 & -(\mu_M+\alpha) \\
\end{array}
\right).
\end{equation}
Its eigenvalues are equal to its diagonal entries and are all negative real values. Hence  $TE$ is asymptotically stable.

c) Setting $\frac{dI}{dt}=0$ in (\ref{EqModTempScarce}) yields
\begin{equation}
F=\frac{\nu_I+\mu_I}{b\left(1-\frac{I}{K}\right)}I.\nonumber
\end{equation}
Then, from $\frac{dM}{dt}=0$, we have
\begin{equation}
M=\frac{(1-r)\nu_I}{\mu_M+\alpha\frac{Y_P}{Y+Y_P}}I.\nonumber
\end{equation}
Further, considering that $\frac{dY}{dt}+\frac{dF}{dt}=0$, we deduce
\begin{equation*}
Y=\frac{r \nu_I I - \mu_F F}{\mu_Y}=\left(\frac{r\nu_I}{\mu_Y}-\frac{\mu_F(\nu_I+\mu_I)}{\mu_Yb\left(1-\frac{I}{K}\right)} \right)I=\frac{\phi(I)}{\mu_Yb\left(1-\frac{I}{K}\right)}I.
\end{equation*}
with $\phi(I)=r\nu_Ib(1-\frac{I}{K})-\mu_F(\nu_I+\mu_I)$.
Now, we use $\frac{dF}{dt}=0$,
\begin{equation}
\frac{dF}{dt} =\frac{\nu_Y \gamma M}{Y+Y_P}Y-(\delta+\mu_F)F
=\frac{\nu_Y \gamma (1-r)\nu_I I}{\mu_M(Y+Y_P)+\alpha Y_P}\frac{\phi(I)}{\mu_Yb\left(1-\frac{I}{K}\right)}I-(\delta+\mu_F)\frac{\nu_I+\mu_I}{b\left(1-\frac{I}{K}\right)}I=0. \nonumber
\end{equation}
Thus by substituting the expressions of $Y$, $F$ and $M$, we have
\begin{equation}
\begin{array}{rl}
 & \frac{\nu_Y \gamma (1-r)\nu_I I}{\mu_M Y+(\mu_M+\alpha)Y_P}\frac{\phi(I)}{\mu_Yb\left(1-\frac{I}{K}\right)}I-(\delta+\mu_F)\frac{\nu_I+\mu_I}{b\left(1-\frac{I}{K}\right)}I=0\\
 &\\
\Leftrightarrow & \left( \nu_Y \gamma (1-r)\nu_I \frac{\phi(I)}{\mu_Y b \left(1-\frac{I}{K}\right)}I \right)=\frac{(\delta+\mu_F)(\nu_I+\mu_I)}{b\left(1-\frac{I}{K}\right)}(\mu_M Y+(\mu_M+\alpha)Y_P) \\
&\\
\Leftrightarrow & \left( \nu_Y \gamma (1-r)\nu_I \frac{\phi(I)}{\mu_Y b \left(1-\frac{I}{K}\right)}I -\frac{(\delta+\mu_F)(\nu_I+\mu_I)(\mu_M)}{b\left(1-\frac{I}{K}\right)}\frac{\phi(I)}{\mu_Y b \left(1-\frac{I}{K}\right)}\right)=\frac{(\delta+\mu_F)(\nu_I+\mu_I)(\mu_M+\alpha)Y_P}{b\left(1-\frac{I}{K}\right)}.
\end{array}\nonumber
\end{equation}
Multiplying both side by $\mu_Y b^2(1-\frac{I}{K})^2$, we obtain an equation for $I$ in the form
\begin{equation}
\psi(I):=I\xi(I)\phi(I)=\eta(Y_P,I),
\label{EqPoly}
\end{equation}
where
\begin{equation}
\xi(I)=\nu_Y \gamma (1-r)\nu_Ib\left(1-\frac{I}{K}\right)-(\delta+\mu_F)(\nu_I+\mu_I)\mu_M,
\label{Eq_xsi}
\end{equation}
\begin{equation}
\phi(I)=r\nu_Ib\left(1-\frac{I}{K}\right)-\mu_F(\nu_I+\mu_I),
\end{equation}
and
\begin{equation}
\eta(Y_P,I)=\mu_Y(\delta+\mu_F)(\nu_I+\mu_I)(\mu_M+\alpha)b\left(1-\frac{I}{K}\right)Y_P.
\label{Eq_eta}
\end{equation}
Therefore, the non-trivial equilibria of (\ref{EqModTempScarce}) are of the form
\begin{eqnarray}
Y_{MD} & = & \left(\frac{r\nu_I}{\mu_Y}-\frac{\mu_F(\nu_I+\mu_I)}{\mu_Yb\left(1-\frac{I_{MD}}{K}\right)} \right)I_{MD},\label{Eq_YMD} \label{Y_MD} \\
F_{MD} & = &\frac{\nu_I+\mu_I}{b\left(1-\frac{I_{MD}}{K}\right)}I_{MD},  \\
M_{MD} & = & \frac{(1-r)\nu_I}{\mu_M+\alpha\frac{Y_P}{Y_{MD}+Y_P}}I_{MD}.
\end{eqnarray}
with $I_{MD}$ a positive root of (\ref{EqPoly}).
Further, we note that to ensure $Y_{MD}>0$, it follows from (\ref{Eq_YMD}) that $I_{MD}$ must satisfy the condition
\begin{equation}
I_{MD}<K\left(1-\frac{\mu_F(\nu_I+\mu_I)}{r\nu_Ib}\right).
\label{Ineq_IMD}
\end{equation}
Thus, to obtain biologically viable equilibria, $I_{MD}$ must belong to the interval $\left[0, K\left(1-\frac{\mu_F(\nu_I+\mu_I)}{r\nu_Ib}\right)\right]$.
\label{Rmk_IMAT}

The roots of  (\ref{EqPoly}) correspond to the values of $I$ where the graph of the cubic polynomial $\psi$ and the straight line $\eta(Y_P,\cdot)$ intersect. It is clear that, the straight line $\eta(Y_P,\cdot)$ intersects the $I$-axis at $I=K$.  
From the factorization of $\psi(I)$ in (\ref{EqPoly}), it is clear that the roots of $\psi$ are
\begin{equation}
I_0=0, \quad I_1=K\left(1-\frac{(\delta+\mu_F)(\nu_I+\mu_I)\mu_M}{\nu_Y\gamma(1-r)\nu_Ib}\right), \textrm{ and } I_2=K\left(1-\frac{\mu_F(\nu_I+\mu_I)}{r\nu_Ib}\right).
\label{Roots_psi}
\end{equation}
Note that the the roots $I_1$ and respectively, $I_2$, are positive and smaller than $K$, i.e.
\begin{equation}
0<I_1,I_2<K,\nonumber
\end{equation}
provided
\begin{equation}
\frac{(\delta+\mu_F)(\nu_I+\mu_I)\mu_M}{\nu_Y\gamma(1-r)\nu_Ib}<1,
\label{Eq_ConRoots1}
\end{equation}
and respectively,
\begin{equation}
\frac{\mu_F(\nu_I+\mu_I)}{r\nu_Ib}<1.
\label{Eq_ConRoots2}
\end{equation}
Inequality (\ref{Eq_ConRoots1}) is satisfied under assumptions (\ref{Assumption2}) and (\ref{Assumption1}), and inequality (\ref{Eq_ConRoots2}) is equivalent to $\mathcal{N}_0>1$, that is, to (\ref{Assumption1}). Therefore, under assumptions (\ref{Assumption1}) and (\ref{Assumption2}), we have that
\begin{equation}
I_1, I_2 \in [0,K].\nonumber
\end{equation}
Therefore, the graph of the cubic polynomial $\psi$ is as given on Figure~\ref{Fig_GraphPolyIntersectOrigMod}.
\begin{figure}[H]
\centering
\includegraphics[width=8cm]{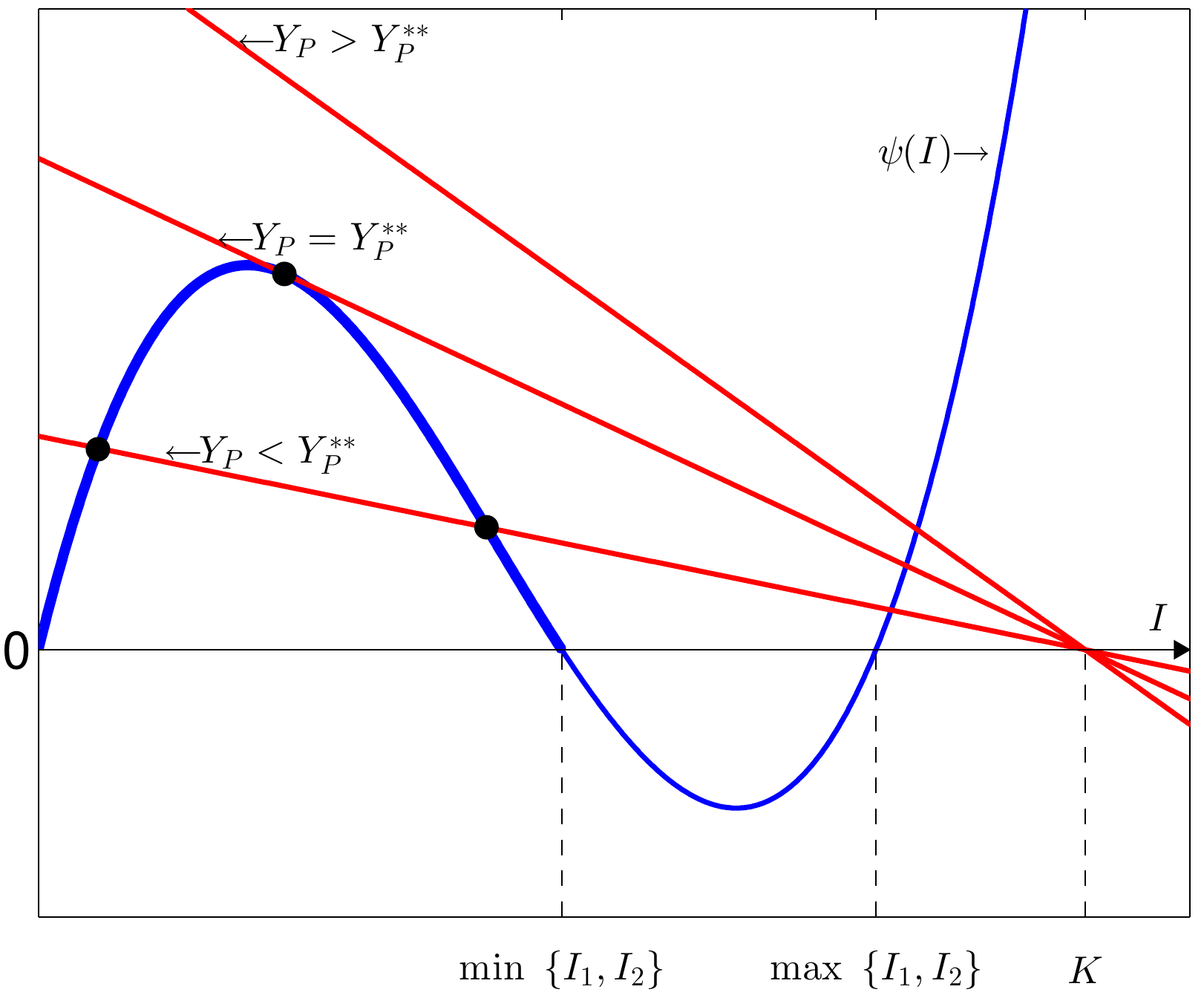}
\caption{Intersections between the graphs of $\eta(Y_P,\cdot)$ (in red) and $\psi$ (in blue) for different values of $Y_P$. The black dots represent the intersection points on the interval $[0,\min\{I_1,I_2\}]$.}
\label{Fig_GraphPolyIntersectOrigMod}
\end{figure}
Considering the inequality (\ref{Ineq_IMD}), only points of intersection of the straight line $\eta(Y_P,\cdot)$ with the section of the graph of $\psi$ for $0\leq I \leq \min\{I_1,I_2\}$, indicated by a thicker line on Figure~\ref{Fig_GraphPolyIntersectOrigMod}, are of relevance to the equilibria of the model.
Let us note that $\psi$ is independent of $Y_P$ while the gradient of the line $\eta$ is a multiple of $Y_P$. We denote by $Y_P^{**}$ the value of $Y_P$ such that the line $\eta(Y_P,\cdot)$ is tangent to the indicated section of the graph of $\psi$, see Figure~\ref{Fig_GraphPolyIntersectOrigMod}. Then it is clear that for $Y_P>Y_P^{**}$ there is no intersection between $\eta(Y_P,\cdot)$ and $\psi$ on $\left[0,\min\{I_1,I_2\}\right]$ while if $0<Y_P<Y_P^{**}$ there are two such points of intersection. This proves items (i) and (ii) in c).

\end{proof}

The next step in this analysis is to merge the results in Theorem~\ref{Thm_EquilibriumMaleAbund_withMAT} for system (\ref{EqModTempAbundance}) and the results in Theorem~\ref{Thm_SystemWithMATmaleScarcity} for the system (\ref{EqModTempScarce}) in order to obtain results for the model (\ref{EqModTemp}) which is actually our interest.

Let $I^{(1)}_{MD}$ and $I^{(2)}_{MD}$,$I^{(1)}_{MD}<I^{(2)}_{MD}$, be the roots of (\ref{EqPoly}) when $0<Y_P<Y_P^{**}$ and denote the respective equilibria by $EE^{(1)}_{MD}$ and $EE^{(2)}_{MD}$.
First we show that $EE^{\#}=EE^{(2)}_{MD}$ when $Y_P=Y_P^*$. Indeed, $Y_P^*$ is selected in such way that $\displaystyle\frac{\gamma M^*}{Y^*+Y^*_P}=1$. Then, the right hand sides of (\ref{EqModTempAbundance}) and (\ref{EqModTempScarce}) are the same at $EE^{\#}$. This implies that $EE^{\#}$ is an equilibrium of (\ref{EqModTempScarce}).

\begin{figure}[H]
\centering
\includegraphics[width=8cm]{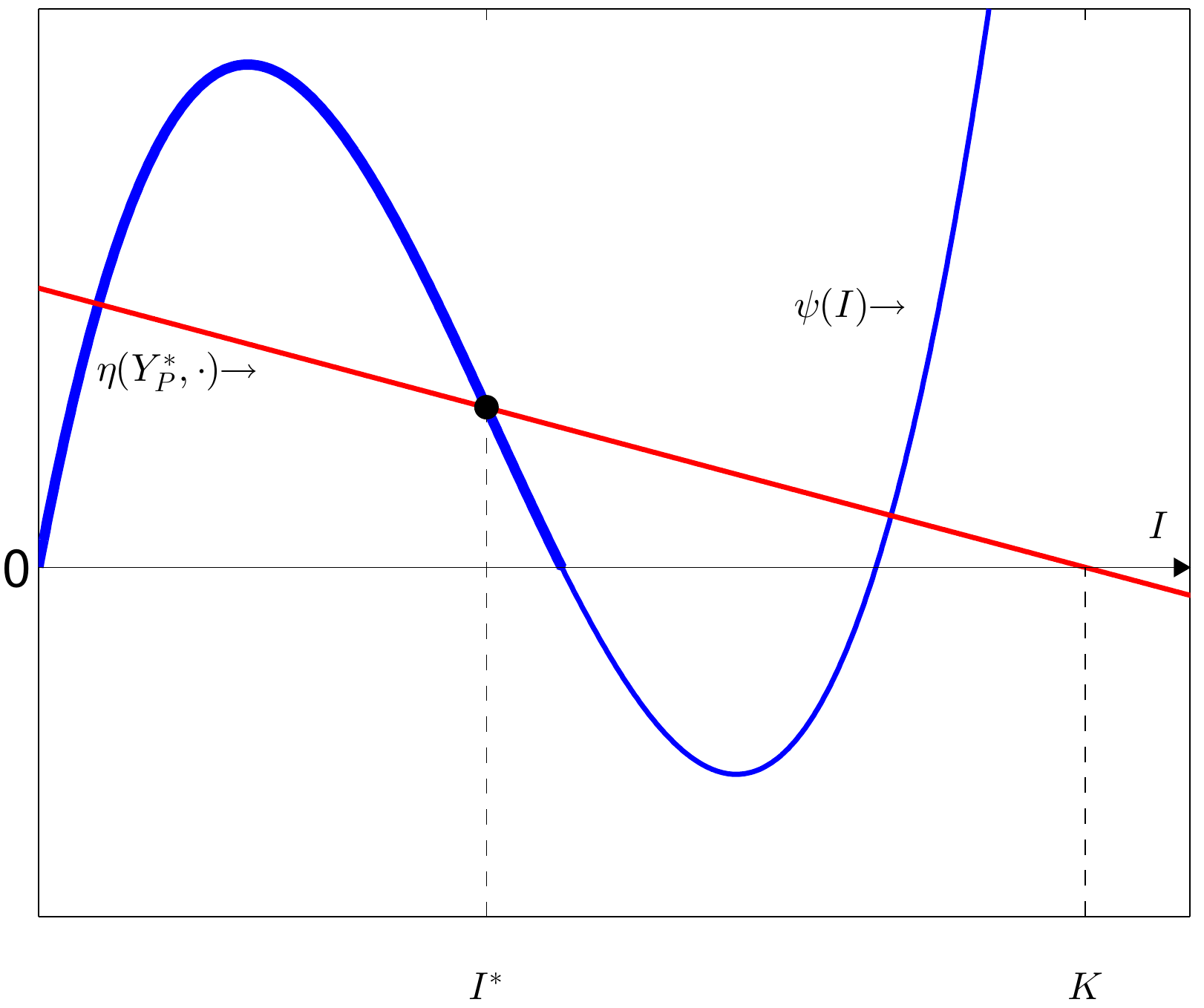}
\caption{Intersections between the graphs of $\eta(Y_P^*,\cdot)$ (in red) and $\psi$ (in blue). The black dot represent the intersection for $I=I^*$.}
\label{Fig_GraphPolyIntersectOrigModbis}
\end{figure}
One can see from Figure~\ref{Fig_GraphPolyIntersectOrigModbis}
 that for $0<Y_P<Y_P^*$, we have 
 \begin{equation}
I^{(1)}_{MD}<I^*<I^{(2)}_{MD},
\label{IneqIstar0}
 \end{equation}
 and for $Y_P^*<Y_P<Y_P^{**}$, we have
 \begin{equation}
 I^{(1)}_{MD}<I^{(2)}_{MD}<I^*.
 \label{IneqIstar}
  \end{equation}
We investigate when the equilibria of (\ref{EqModTemp}) are biologically relevant, that is when they belong to the male scarcity region. We have
\begin{eqnarray}
& & Y_{MD}+Y_P-\gamma M_{MD} \\ 
& & = Y_{MD}+Y_P-\gamma\frac{(1-r)\nu_I}{\mu_M+\alpha\frac{Y_P}{Y_{MD}+Y_P}}I_{MD}\nonumber \\
& & =\frac{\mu_MY_{MD}+\mu_Y Y_P+\alpha Y_P -\gamma (1-r)\nu_I}{\mu_M+\alpha\frac{Y_P}{Y_{MD}+Y_P}}I_{MD}. \nonumber \\
& & =\left(\frac{\mu_{M}}{\mu_M+\alpha\frac{Y_P}{Y_{MD}+Y_P}}\left(\frac{r\nu_I}{\mu_Y}-\frac{\mu_F(\nu_I+\mu_I)}{\mu_Y\left(1-\frac{I_{MD}}{K}\right)}\right)-\gamma(1-r)\nu_I \right)I_{MD}+(\mu_M+\alpha)Y_P.\nonumber
\end{eqnarray}

Let $Y_P^*<Y_P<Y_P^{**}$. Then using also (\ref{IneqIstar}) we have
\begin{eqnarray}
& & Y_{MD}+Y_P-\gamma M_{MD} \nonumber  \\
& &\geq\frac{\mu_{M}}{\mu_M+\alpha\frac{Y_P}{Y_{MD}+Y_P}}\left(\frac{r\nu_I}{\mu_Y}-\frac{\mu_F(\nu_I+\mu_I)}{\mu_Y\left(1-\frac{I^*}{K} \right)}-\gamma(1-r)\nu_I\right)I_{MD}+(\mu_M+\alpha)Y_P\label{Ineq1} \\
& & =\left(\left(\mu_M\frac{r \nu_I (\delta+\mu_F)}{\nu_Y \mu_F + \mu_Y \mu_ F +\delta \mu_Y}\right)-\gamma(1-r)\nu_I\right)I_{MD}+(\mu_M+\alpha)Y_P \quad \textrm{(using (\ref{YPstar}))} \nonumber \\
&& =-(\mu_M+\alpha)\frac{Y_P^*}{I^*}I_{MD}+(\mu_M+\alpha)Y_P\nonumber \\
&& = \frac{\mu_M+\alpha}{I^*}\left(Y_P I^*-Y_P^*I_{MD}\right)>0.\nonumber
\end{eqnarray}
Therefore, in this case $EE^{(1)}_{MD}$ and $EE^{(2)}_{MD}$ are both in the male scarcity region. Hence, they are also equilibria of (\ref{EqModTemp}).

If $Y_P<Y_P^*$ and $I_{MD}>I^*$, considering (\ref{IneqIstar0}), then using the same method as in (\ref{Ineq1}) we obtain
\begin{equation}
Y_P+Y_{MD}-\gamma M_{MD}<0.
\end{equation}
Therefore, $EE^{(2)}_{MD}$ is not in the male scarcity region. Hence, it is not an equilibrium of (\ref{EqModTemp}).

\

Taking into consideration the above results regarding $EE^{(1)}_{MD}$ and $EE^{(2)}_{MD}$ we obtain the following theorem for the model (\ref{EqModTemp}).

\begin{theorem}
Let $Y_P>0$. The following holds for model (\ref{EqModTemp}):
\begin{itemize}
\item[a)] $TE$ is an asymptotically stable equilibrium.
\item[b)] If $0<Y_P<Y_P^*$ there are two positive equilibria $EE^{(1)}_{MD}$ and $EE^{\#}$, where $EE^{\#}$ is asymptotically stable.
\item[c)] If $Y_P^*<Y_P<Y_P^{**}$ there are two positive equilibria $EE^{(1)}_{MD}$ and $EE^{(2)}_{MD}$.
\item[d)] If $Y_P>Y_P^{**}$ there is no positive equilibrium.
\end{itemize}
\label{Thm_SummaryEqModTemp}
\end{theorem}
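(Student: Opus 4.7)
The plan is to assemble Theorems~\ref{Thm_EquilibriumMaleAbund_withMAT} and~\ref{Thm_SystemWithMATmaleScarcity} by checking, for each candidate produced in the two regimes, whether it actually sits in the half-space where its defining system is active. Concretely, a point $\bar x\in\mathbb{R}^4_+$ is an equilibrium of (\ref{EqModTemp}) iff either (i) $\bar x$ is an equilibrium of (\ref{EqModTempAbundance}) with $\gamma\bar M\geq\bar Y+Y_P$, or (ii) $\bar x$ is an equilibrium of (\ref{EqModTempScarce}) with $\gamma\bar M\leq\bar Y+Y_P$; points on the hyperplane $\gamma M=Y+Y_P$ satisfy both systems simultaneously, which is exactly why $EE^{\#}=EE^{(2)}_{MD}$ at $Y_P=Y_P^{*}$. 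With this dictionary in hand the four statements reduce to a counting exercise over the ranges of $Y_P$, and most of the geometry has already been done in the discussion preceding the theorem.

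For part (a), I would start from the observation that $TE$ lies strictly in the scarcity region whenever $Y_P>0$, since $\gamma M/(Y+Y_P)=0$ at $TE$. Hence, on a neighborhood of $TE$, the right-hand side of (\ref{EqModTemp}) coincides with that of (\ref{EqModTempScarce}), so the Jacobian of (\ref{EqModTemp}) at $TE$ is exactly the matrix $\mathcal{J}_h(TE)$ computed in the proof of Theorem~\ref{Thm_SystemWithMATmaleScarcity}(b), whose eigenvalues $-(\nu_I+\mu_I)$, $-\mu_Y$, $-(\delta+\mu_F)$, $-(\mu_M+\alpha)$ are strictly negative. Local asymptotic stability follows immediately.

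For parts (b)--(d), I would tally admissible equilibria separately in each subrange of $Y_P$. Theorem~\ref{Thm_EquilibriumMaleAbund_withMAT} supplies the unique positive equilibrium $EE^{\#}$ of (\ref{EqModTempAbundance}), and by the very definition of $Y_P^{*}$ in (\ref{YPstar}) it belongs to the abundance region of (\ref{EqModTemp}) iff $0<Y_P<Y_P^{*}$. Theorem~\ref{Thm_SystemWithMATmaleScarcity} supplies two positive equilibria $EE^{(1)}_{MD},EE^{(2)}_{MD}$ of (\ref{EqModTempScarce}) when $0<Y_P<Y_P^{**}$ and none when $Y_P>Y_P^{**}$. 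The inequality chain culminating in (\ref{Ineq1}), together with $EE^{\#}=EE^{(2)}_{MD}$ at $Y_P=Y_P^{*}$ and the orderings (\ref{IneqIstar0})--(\ref{IneqIstar}), shows that both scarcity equilibria satisfy $\gamma M_{MD}<Y_{MD}+Y_P$ when $Y_P^{*}<Y_P<Y_P^{**}$, whereas for $0<Y_P<Y_P^{*}$ only $EE^{(1)}_{MD}$ does so and $EE^{(2)}_{MD}$ drifts into the abundance region, where it fails to be an equilibrium of (\ref{EqModTemp}). Combining the counts yields $\{EE^{(1)}_{MD},EE^{\#}\}$ in case (b), $\{EE^{(1)}_{MD},EE^{(2)}_{MD}\}$ in case (c), and the empty list in case (d). Finally, for the stability claim in (b), the equilibrium $EE^{\#}$ lies strictly inside the abundance region, so (\ref{EqModTemp}) coincides with (\ref{EqModTempAbundance}) on a neighborhood of it, and the global stability of $EE^{\#}$ from Theorem~\ref{Thm_EquilibriumMaleAbund_withMAT} restricts to asymptotic stability for the full model.

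The only delicate bookkeeping is tracking how $EE^{(2)}_{MD}$ crosses the separating hyperplane as $Y_P$ sweeps through $Y_P^{*}$. Because $Y_P=Y_P^{*}$ is precisely the tangency value at which the scarcity branch $EE^{(2)}_{MD}$ meets the abundance branch $EE^{\#}$ on $\gamma M=Y+Y_P$, the strict inequalities (\ref{IneqIstar0}) and (\ref{IneqIstar})---combined with the monotonic dependence of the intersection points on $Y_P$ visible in Figure~\ref{Fig_GraphPolyIntersectOrigModbis}---resolve unambiguously on which side of the hyperplane $EE^{(2)}_{MD}$ sits. Once this transition is secured, the remainder of the proof is simply matching branches between the two earlier theorems.
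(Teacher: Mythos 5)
Your proposal is correct and follows essentially the same route as the paper: the paper also establishes this theorem by merging Theorems~\ref{Thm_EquilibriumMaleAbund_withMAT} and~\ref{Thm_SystemWithMATmaleScarcity}, using the definition of $Y_P^*$ in (\ref{YPstar}) to decide when $EE^{\#}$ is admissible, the identity $EE^{\#}=EE^{(2)}_{MD}$ at $Y_P=Y_P^*$ together with the orderings (\ref{IneqIstar0})--(\ref{IneqIstar}) and the sign computation (\ref{Ineq1}) to decide on which side of the hyperplane $\gamma M=Y+Y_P$ each scarcity equilibrium lies, and the Jacobian $\mathcal{J}_h(TE)$ for the stability of $TE$. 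Your treatment matches the paper's level of detail, including leaving implicit the verification that $EE^{(1)}_{MD}$ remains in the scarcity region when $0<Y_P<Y_P^*$.
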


Obtaining theoretically the stability properties of the equilibria $EE^{(1)}_{MD}$ and $EE^{(2)}_{MD}$ is not easy considering the complexity of the system.
The numerical simulations indicate that $EE^{(1)}_{MD}$ is unstable while $EE^{(2)}_{MD}$ is asymptotically stable, and that the equilibria are the only invariant set of the system on $\mathbb{R}^{+}_{4}$. Further, when $Y_P>Y_P^{**}$, $TE$ is globally asymptotically stable.
The equilibria and their properties are presented on the bifurcation diagram in Figure~\ref{Fig_BifurcationDiagram}. 
The equilibrium values of $Y+F$ are given as function of the bifurcation parameter $Y_P$. The solid line represents stable equilibria, while the dotted line represents unstable equilibria.

\begin{figure}[H]
\centering
\includegraphics[width=10cm]{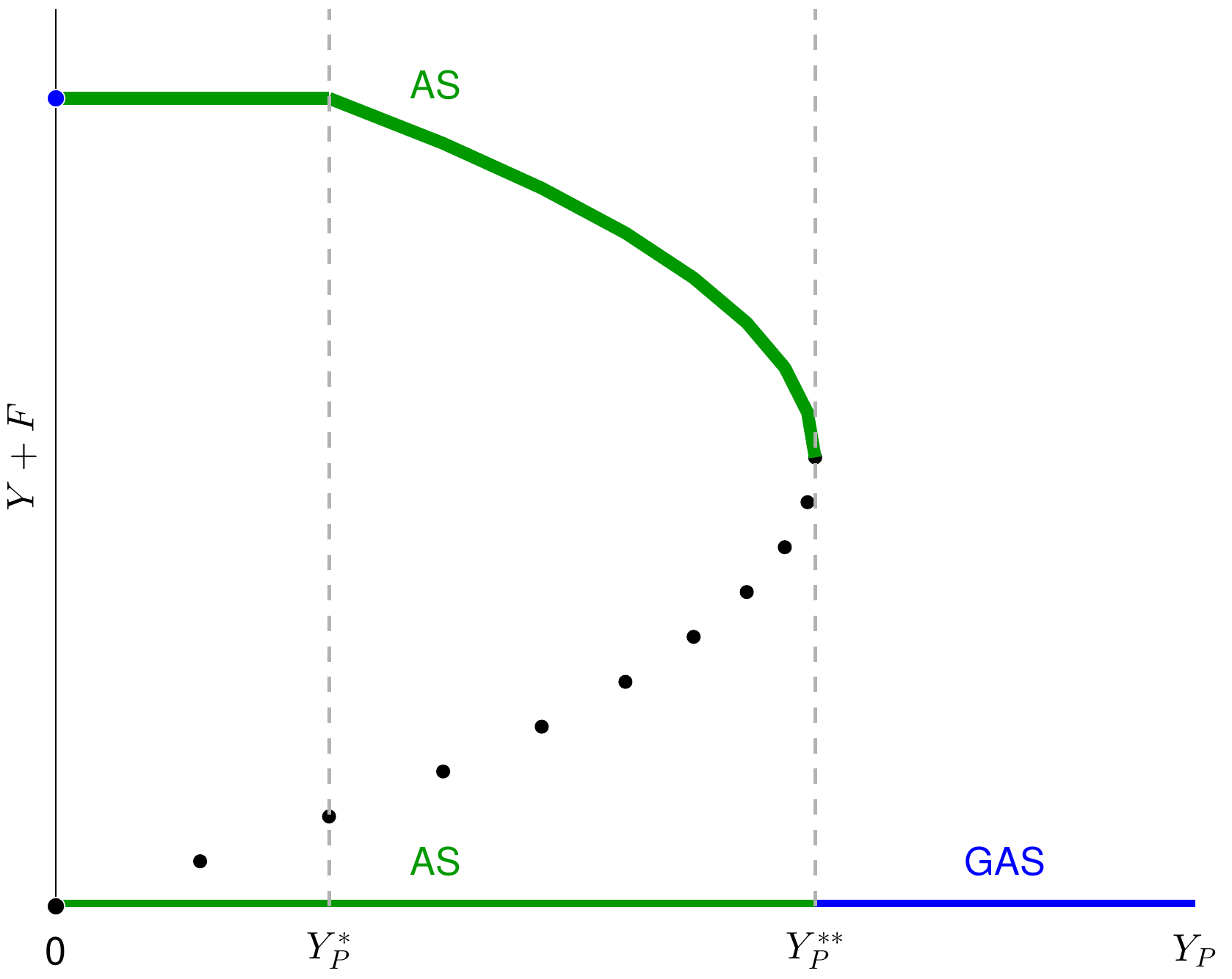}
\caption{Bifurcation diagram of the values of $Y+F$ at equilibrium with respect to the values of $Y_P$ for system (\ref{EqModTemp}).}
\label{Fig_BifurcationDiagram}
\end{figure}

\subsubsection{Global asymptotic stability of the trivial equilibrium for sufficiently large $Y_P$}\label{SubsubMaleScarceGASofTE}

Theorem~\ref{Thm_SummaryEqModTemp} shows that for $Y_P<Y_P^{**}$ the insect population persists at substantial endemic level. Hence, the numerically observed global asymptotic stability of $TE$ for $Y_P>Y_P^{**}$ is of significant practical importance.
This section deals with the mathematical proof of this result. More precisely, we will establish global asymptotic stability of $TE$ under slightly stronger condition $Y_P>\tilde{Y}_P^{**}$ where $\tilde{Y}_P^{**}>Y_P^{**}$. We also show that $\tilde{Y}_P^{**}$ is a close approximation for $Y_P^{**}$.

The asymptotic analysis for system (\ref{EqModTemp}) cannot be conducted in the same way as for the other systems considered so far, since it is not a monotone system. More precisely due to the term $\displaystyle-\nu_Y\frac{\gamma M}{Y+Y_P}Y$ in the equation for the $Y$ compartment, the right hand side of (\ref{EqModTemp}) is not quasi-monotone. In order to obtain the practically important result mentioned above, we consider an auxiliary system which is monotone and provides upper bounds for the solutions of (\ref{EqModTemp}). The system is obtained by removing the mentioned term in the second equation and replacing $\displaystyle\min\{\frac{\gamma M}{Y+Y_P},1\}$ by $\displaystyle\frac{\gamma M}{Y+Y_P}$ in the third equation. In vector form it is given as
\begin{equation}
\frac{dx}{dt}=\tilde{h}(x),
\label{Sys_WithMAT_eta}
\end{equation}
where $x=(I,Y,F,M)^{T}$ and 

\begin{equation}
\tilde{h}(x)=
\left(
\begin{array}{c}
b \left(1-\frac{I}{K} \right)F-\left( \nu_I+\mu_I \right)I\\
r\nu_I I -\mu_Y Y+\delta F \\
\nu_Y\frac{\gamma M}{Y+Y_P}Y -\delta F-\mu_{F}F \\
(1-r)\nu_I I - (\mu_M+\alpha\frac{Y_P}{Y+Y_P})M
\end{array}
\right).
\end{equation}

\begin{theorem}
\begin{itemize}
\item[a)] The system of ODEs (\ref{Sys_WithMAT_eta}) defines a positive dynamical system on $\mathbb{R}^4_+$.
\item[b)] TE is asymptotically stable equilibrium.
\item[c)] There exists a threshold value $\tilde{Y}_P^{**}$ such that
\begin{itemize}
\item[i)] if $Y_P>\tilde{Y}_P^{**}$, TE is globally asymptotically stable on $\mathbb{R}^4_+$;
\item[ii)] if $0<Y_P<\tilde{Y}_P^{**}$, the system has three equilibria, $TE$ and two positive equilibria $\tilde{E}^{(1)}$ and $\tilde{E}^{(2)}$ such that
$\tilde{E}^{(1)}<\tilde{E}^{(2)}$. The basin of attraction of $TE$ contains the set $\{x\in\mathbb{R}^4_+:0\leq x<\tilde{E}^{(1)}\}$. 
The basin of attraction of $\tilde{E}^{(2)}$ contains the set $\{x\in\mathbb{R}^4_+: x\geq \tilde{E}^{(2)}, I\leq K \}$.
\end{itemize}
\end{itemize}
\label{theoModifiedSystem}
\end{theorem}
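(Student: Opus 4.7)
The plan is to treat the whole theorem with the same monotone‑dynamical‑systems machinery used in the proofs of Theorem~\ref{Prop_PosiDynSysdelta} and Theorem~\ref{Thm_SystemWithMATmaleScarcity}, since the auxiliary field $\tilde h$ was specifically designed to kill the single quasi-monotonicity‑violating term of (\ref{EqModTemp}). Before starting I would verify that the off-diagonal entries of the Jacobian of $\tilde h$—$b(1-I/K)$, $r\nu_I$, $\delta$, $\nu_Y\gamma M Y_P/(Y+Y_P)^2$, $\nu_Y\gamma Y/(Y+Y_P)$, $(1-r)\nu_I$ and $\alpha M Y_P/(Y+Y_P)^2$—are non-negative on $\Omega_K=\{x\in\mathbb{R}^4_+:I\leq K\}$, so the system is cooperative there, and that the associated directed graph is strongly connected whenever $Y,M>0$, so the semiflow is strongly order-preserving in the interior. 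Part (a) then follows the blueprint of Theorem~\ref{Prop_PosiDynSysdelta}(a): for each $q\geq K$ construct $\tilde y_q$ with $\tilde h(\tilde y_q)\leq\mathbf{0}$ whose sweep covers $\Omega_K$, apply Theorem~\ref{Thm_GAS_MonSys} on $[\mathbf{0},\tilde y_q]$, and use the inward orientation of the field on $\partial\mathbb{R}^4_+$ together with $\frac{dI}{dt}\leq 0$ on $\{I>K\}$ to deduce that $\Omega_K$ is absorbing. For (b), $Y=M=0$ at $TE$ kills the $Y_P/(Y+Y_P)^2$ and $Y/(Y+Y_P)$ contributions, so the Jacobian reduces to
\[
\mathcal{J}_{\tilde h}(TE)=\begin{pmatrix}
-(\nu_I+\mu_I) & 0 & b & 0\\
r\nu_I & -\mu_Y & \delta & 0\\
0 & 0 & -(\delta+\mu_F) & 0\\
(1-r)\nu_I & 0 & 0 & -(\mu_M+\alpha)
\end{pmatrix},
\]
with characteristic polynomial $(\lambda+\nu_I+\mu_I)(\lambda+\mu_Y)(\lambda+\delta+\mu_F)(\lambda+\mu_M+\alpha)$, hence four strictly negative eigenvalues.

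For the equilibrium analysis in (c) I would mimic the elimination in the proof of Theorem~\ref{Thm_SystemWithMATmaleScarcity}(c). Because the $Y$-equation now reduces to $\mu_Y Y=r\nu_I I+\delta F$, expressing $F$, $Y$ and $M$ in terms of $I$ and substituting into $\frac{dF}{dt}=0$ produces a scalar equation $\tilde\psi(I)=\tilde\eta(Y_P,I)$ of the same format as (\ref{EqPoly}), with the same $\xi$ of (\ref{Eq_xsi}) and the same form of $\eta$, but with
\[
\tilde\phi(I)=r\nu_I b\bigl(1-\tfrac{I}{K}\bigr)+\delta(\nu_I+\mu_I)
\]
replacing $\phi$. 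The decisive difference is that $\tilde\phi$ is a sum of positive quantities, so $\tilde\phi>0$ on $[0,K]$; on that interval the cubic $\tilde\psi=I\,\xi\,\tilde\phi$ therefore has only the two roots $0$ and $I_1$ from (\ref{Roots_psi}), with a single interior maximum on $(0,I_1)$. Since $\tilde\eta(Y_P,\cdot)$ is affine in $I$ with slope proportional to $Y_P$, the tangent value $\tilde Y_P^{**}$ is well defined: above it no admissible root exists (so $TE$ is the unique equilibrium in $\mathbb{R}^4_+$), below it there are exactly two admissible roots $\tilde I^{(1)}<\tilde I^{(2)}$. The explicit expressions for $F$, $Y$ and $M$ as functions of $I$ are each strictly increasing in $I$ on $(0,I_1)$ (immediate for $F$; rewrite $Y$ as $r\nu_I I/\mu_Y+\delta(\nu_I+\mu_I)I/(\mu_Y b(1-I/K))$; for $M$ monotonicity in $I$ and in $Y$ combine), which forces $\tilde E^{(1)}<\tilde E^{(2)}$ componentwise.

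Finally, for the asymptotic statements in (c): when $Y_P>\tilde Y_P^{**}$, $TE$ is the unique equilibrium in every $[\mathbf{0},\tilde y_q]$, so Theorem~\ref{Thm_GAS_MonSys} yields its GAS on each such interval and, via the absorption of $\Omega_K$, on all of $\mathbb{R}^4_+$. For $0<Y_P<\tilde Y_P^{**}$ the basin inclusions will follow from the sandwich principle for monotone semiflows: if $x_0\geq\tilde E^{(2)}$ with $I\leq K$, the orbit is trapped between $\tilde E^{(2)}$ and a super-solution $\tilde y_q$, and must converge to the only equilibrium in that order interval, namely $\tilde E^{(2)}$; if $x_0<\tilde E^{(1)}$, the orbit stays in $[\mathbf{0},\tilde E^{(1)}]$ and, by strong monotonicity, is pushed into $\{x\ll\tilde E^{(1)}\}$ after any positive time, so its $\omega$-limit set is a connected invariant set of equilibria strictly below $\tilde E^{(1)}$—and the only candidate is $TE$. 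I expect this last step, ruling out convergence to the intermediate equilibrium $\tilde E^{(1)}$, to be the main obstacle: it requires exploiting that $\tilde E^{(1)}$ arises from a tangent (saddle-node-type) crossing of $\tilde\psi$ and $\tilde\eta(Y_P,\cdot)$ and hence is not stable from below, a fact that is cleanest to invoke through Hirsch's convergence theorem for strongly order-preserving semiflows rather than through direct spectral analysis at $\tilde E^{(1)}$.
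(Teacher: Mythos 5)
Your proposal is correct and follows essentially the same route as the paper: cooperativity of $\tilde h$ on $\Omega_K$, the super-equilibria $\tilde y_q$ combined with Theorem~\ref{Thm_GAS_MonSys} and the absorbing-set argument for (a) and for (c)(i)--(ii), the triangular Jacobian at $TE$ for (b), and the reduction to $\tilde\psi(I)=I\,\xi(I)\,\tilde\phi(I)=\eta(Y_P,I)$ with the positivity of $\tilde\phi$ on $[0,K]$ yielding the tangency threshold $\tilde Y_P^{**}$ and the ordering $\tilde E^{(1)}<\tilde E^{(2)}$. The only point of divergence is the basin of $TE$ in (c)(ii), where the step you flag as the main obstacle (ruling out convergence to $\tilde E^{(1)}$ from below via Hirsch-type arguments) is closed in the paper more directly by Theorem~\ref{Thm_BasinAttraction}: since $TE$ and $\tilde E^{(1)}$ are the only equilibria in $[TE,\tilde E^{(1)}]$, all non-equilibrium solutions in that order interval converge to one and the same of the two, and the local asymptotic stability of $TE$ established in part (b) forces that common limit to be $TE$.
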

\begin{proof}
a) and b) are proved similarly to a) and b) in Theorem \ref{theoScarceWithMat}.

c) Setting the first, second and fourth component of $\tilde{h}$ to zero, we have $$Y=\frac{r\nu_I I +\delta F}{\mu_Y},\quad F=\frac{\nu_I+\mu_I}{b\left(1-\frac{I}{K}\right)}I, \quad  M=\frac{(1-r)\nu_I}{\mu_M Y +(\mu_M+\alpha)Y_P}I.$$ 
Setting the third component of $\tilde{h}$ to zero and substituting the expressions for $Y$, $F$ and $M$ above, we obtain an equation for $I$ in the form
\begin{equation}
\tilde{\psi}(I):= I  \xi(I) \tilde{\phi}(I)=\eta(Y_P,I).
\end{equation}
where $\xi(I)$ and $\eta(Y_P,I)$ are the linear expression given in (\ref{Eq_xsi}) and (\ref{Eq_eta}) and
\begin{equation}
\tilde{\phi}(I)=r\nu_Ib\left(1-\frac{I}{K}\right)+\delta(\nu_I+\mu_I).
\label{EqPolyTilde}
\end{equation}
Therefore the non-trivial equilibria of (\ref{Sys_WithMAT_eta}) are of the form
\begin{eqnarray}
\tilde{Y} & = & \frac{1}{\mu_Y}\left(r\nu_I+\frac{\delta(\nu_I+\mu_I)}{b\left(1-\frac{\tilde{I}}{K}\right)} \right)\tilde{I}, \nonumber \\
\tilde{F} & = &\frac{\nu_I+\mu_I}{b\left(1-\frac{\tilde{I}}{K}\right)}\tilde{I},  \label{Equi_eta}\\
\tilde{M} & = & \frac{(1-r)\nu_I}{\mu_M+\alpha\frac{Y_P}{\tilde{Y}+Y_P}}\tilde{I},\nonumber
\end{eqnarray}
with $\tilde{I}$ a positive root of (\ref{EqPolyTilde}).
The roots of (\ref{EqPolyTilde}) correspond to the values of $I$ where the graph of the cubic polynomial $\tilde{\psi}$ and the straight line $\eta(Y_P,\cdot)$ intersect. It is clear that the straight line $\eta(Y_P,\cdot)$ intersects the $I$ axis at $I=K$. Note that the state where $I$ greater is than $K$ is not sustainable, and therefore not biologically relevant. From the factorization of $\tilde{\psi}$ in (\ref{EqPolyTilde}), it is clear that $I_0$ and $I_1$ given in (\ref{Roots_psi}) and
\begin{equation}
\tilde{I}_2=\left(1+\frac{\delta (\nu_I+\mu_I)}{r\nu_Ib}\right)K
\end{equation}
are roots of $\tilde{\psi}$. Clearly, we have
\begin{equation}
0<I_1<K<\tilde{I}_2. \nonumber
\label{IneqImodif}
\end{equation}
Therefore, the graph of the cubic polynomial $\tilde{\psi}$ is as given in Figure~\ref{Fig_GraphPolyIntersect}.
\begin{figure}[H]
\centering
\includegraphics[width=8cm]{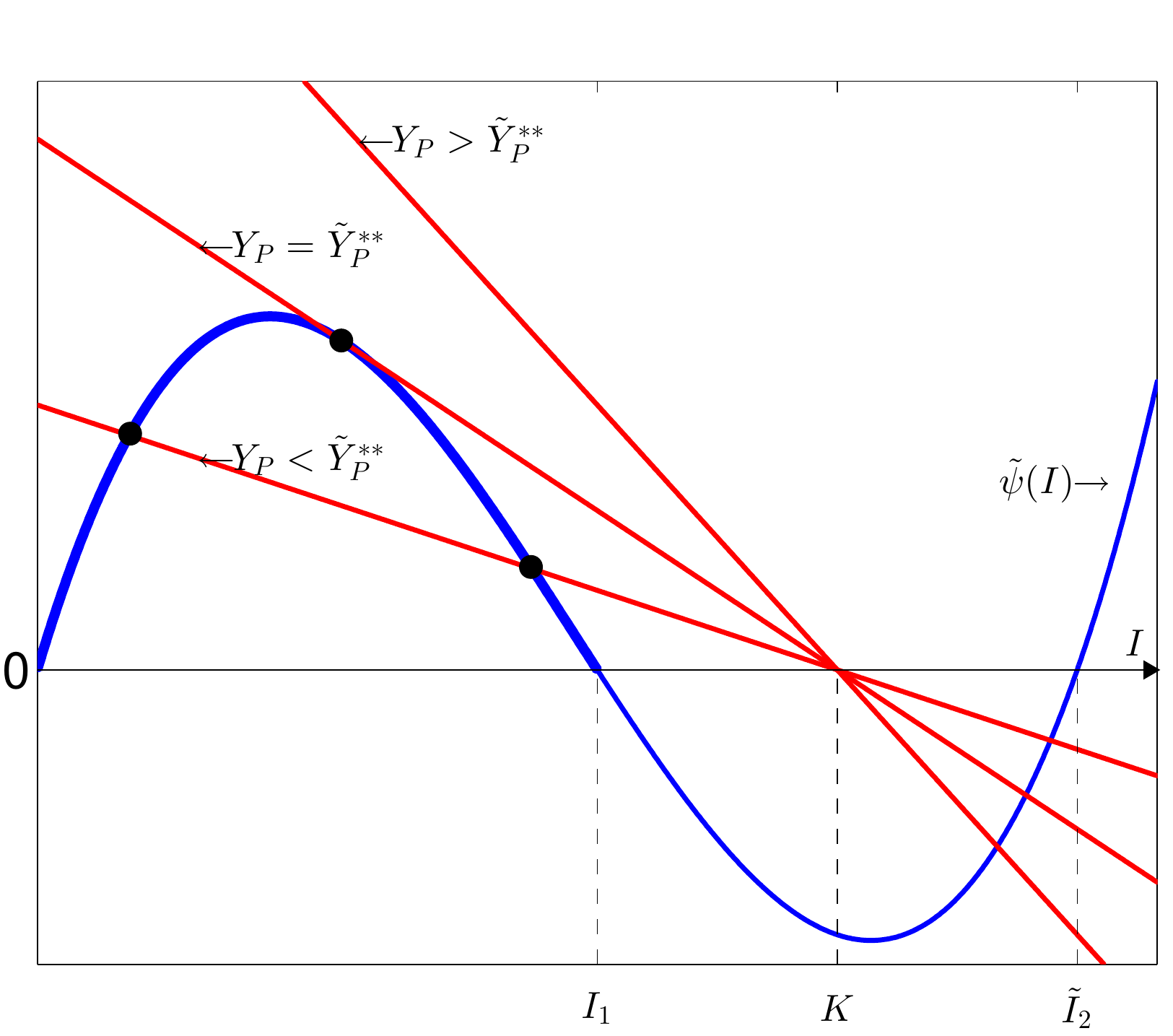}
\caption{Intersections between the graphs of $\eta(Y_P,\cdot)$ (in red) and $\psi$ (in blue) for different values of $Y_P$. The black dots represent the intersection points  on the interval $[0,I_1]$.}
\label{Fig_GraphPolyIntersect}
\end{figure}

Considering the inequalities (\ref{Ineq_IMD}) and (\ref{IneqImodif}), only the points of intersection of the straight line $\eta(Y_P,\cdot)$ with the section of the graph of $\tilde{\psi}$ for $0<I<I_1$ are of relevance to the equilibria of the model.
Note that $\tilde{\psi}$ is independent of $Y_P$ while the gradient of $\eta(Y_P,\cdot)$ is a multiple of $Y_P$. We denote by $\tilde{Y}_P^{**}$ the value of $Y_P$ such that the line $\eta$ is tangent to the indicated section of the graph of $\tilde{\psi}$, see Figure~\ref{Fig_GraphPolyIntersect}. Then, it is clear that for $Y_P>\tilde{Y}_P^{**}$ there is no intersection between $\eta(Y_P,\cdot)$ and $\tilde{\psi}$ on $[0,I_1]$ while if $0<Y_P<\tilde{Y}_P^{**}$ there are two such points of intersection.

i) Let $Y_P>\tilde{Y}_P^{**}$. Consider the point
\begin{equation}
\tilde{y}_q=
\left(
\begin{array}{c}
K\\
\frac{1}{\mu_Y}\left(r\nu_I+\frac{\delta\nu_Y \gamma(1-r)\nu_I}{(\delta+\mu_F)\mu_M}\right)q\\
\frac{\nu_Y \gamma (1-r) \nu_I}{(\delta+\mu_F)\mu_M}q\\
\frac{(1-r)\nu_I }{\mu_M}q\\
\end{array}
\right),
\label{ytilde_q}
\end{equation}
where $q\in\mathbb{R}, q\geq K$. 
It is easy to see that $\tilde{h}(\tilde{y}_q)\leq 0$. Then by Theorem~\ref{Thm_GAS_MonSys}, $TE$ is GAS on $[\textbf{0},\tilde{y}_q]$. Therefore, $TE$ is GAS on $\Omega_K=\cup_{q \geq K}[\textbf{0},\tilde{y}_q]$ as well. Similarly to Theorem~\ref{Prop_PosiDynSysdelta}, $\Omega_K$ is an absorbing set. Hence $TE$ is GAS on $\mathbb{R}^4_+$.

ii) Let $0<Y_P<\tilde{Y}_P^{**}$. Denote the two equilibria by $\tilde{E}^{(j)}=(\tilde{I}^{(j)},\tilde{Y}^{(j)},\tilde{F}^{(j)},\tilde{M}^{(j)})^T$, $j=1,2$, where $\tilde{I}^{(1)}<\tilde{I}^{(2)}$. Since $\tilde{Y}$, $\tilde{F}$, $\tilde{M}$ as given in (\ref{Equi_eta}) are increasing functions of $\tilde{I}$, we have
$0<\tilde{E}^{(1)}<\tilde{E}^{(2)}$ (Figure~\ref{Fig_BassinOfAttraction}).
\begin{figure}[H]
\centering
\includegraphics[width=8cm]{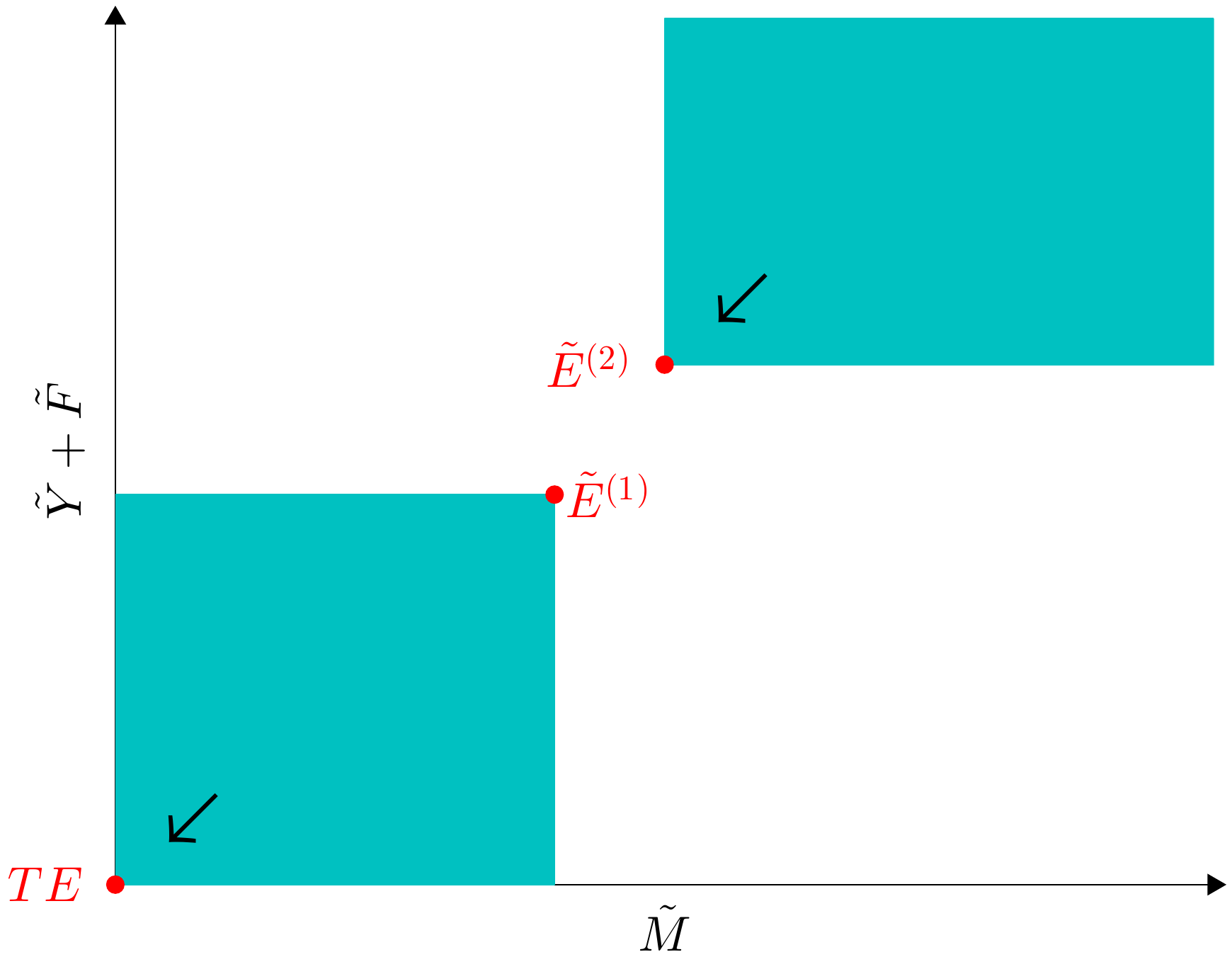}
\caption{Positive invariant sets, when $0<Y_P<\tilde{Y}_P^{**}$.}
\label{Fig_BassinOfAttraction}
\end{figure}
Considering $\tilde{y}_q$ in (\ref{ytilde_q}) we have 
\[
\tilde{h}(\tilde{E}^{(2)})=0\leq \tilde{h}(\tilde{y}_q).
\]
Then by Theorem~\ref{Thm_GAS_MonSys} $\tilde{E}^{(2)}$ is GAS on $[\tilde{E}^{(2)},\tilde{y}_q]$. Therefore, $\tilde{E}^{(2)}$ is GAS on $\cup_{q \geq K}[\tilde{E}^{(2)},\tilde{y}_q]=\{x\in\mathbb{R}^4_+: x\geq \tilde{E}^{(2)},I\leq K\}$. 
We obtain the statements about the basin of attraction of $TE$ by using Theorem~\ref{Thm_BasinAttraction}. Indeed, $TE$ being asymptotically stable, attracts some solutions initiated in $\displaystyle[TE,\tilde{E}^{(1)}]$. Then it follows from Theorem~\ref{Thm_BasinAttraction} that all solutions initiated in $\{x\in\mathbb{R}^4_+: x< \tilde{E}^{(1)}\}$ converge to $TE$.
\end{proof}

The implication of Theorem~\ref{theoModifiedSystem} for the system (\ref{EqModTemp}) are stated in the following theorem which extends the results of Theorem~\ref{Thm_SummaryEqModTemp}.

\begin{theorem}
Let $Y_P>0$. Then the following hold for the model (\ref{EqModTemp}).
\begin{itemize}
\item[a)] If $0<Y_P\leq \tilde{Y}_P^{**}$, then the basin of attraction of $TE$ contains $\{x\in\mathbb{R}^4_+: x<\tilde{E}^{(1)}\}$.
\item[b)] If $Y_P>\tilde{Y}_P^{**}$, then $TE$ is GAS on $\mathbb{R}^4_+$.
\end{itemize}
\label{Thm_GAS_TE}
\end{theorem}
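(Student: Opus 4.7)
The plan is to derive both statements from the domination of (\ref{EqModTemp}) by the auxiliary cooperative system (\ref{Sys_WithMAT_eta}) analysed in Theorem \ref{theoModifiedSystem}. Writing $h$ for the right-hand side of (\ref{EqModTemp}) and $\tilde h$ for that of (\ref{Sys_WithMAT_eta}), the first step is to check the componentwise inequality $\tilde h(x) \ge h(x)$ on $\mathbb{R}^4_+$. The first and fourth components coincide. The second difference is $\tilde h_2 - h_2 = \nu_Y \min\{\gamma M/(Y+Y_P),\,1\}\,Y \ge 0$, and the third is $\tilde h_3 - h_3 = \nu_Y Y\,\bigl(\gamma M/(Y+Y_P) - \min\{\gamma M/(Y+Y_P),\,1\}\bigr)$, which vanishes in the male scarcity region and is non-negative in the male abundance region.

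Since $\tilde h$ is quasi-monotone on $\Omega_K = \{x \in \mathbb{R}^4_+ : I \le K\}$, as already used in the proof of Theorem \ref{theoModifiedSystem}, the Kamke-Müller comparison principle for cooperative systems yields, for every $x_0 \in \Omega_K$, the sandwich $0 \le x(t) \le \tilde x(t)$ where $x(\cdot)$ and $\tilde x(\cdot)$ are the solutions of (\ref{EqModTemp}) and (\ref{Sys_WithMAT_eta}) respectively with common initial condition $x_0$. For initial data with $I_0 > K$, the first equation of (\ref{EqModTemp}) forces $dI/dt < 0$ until $I \le K$, so solutions of (\ref{EqModTemp}) are absorbed into $\Omega_K$ in finite time and the comparison applies from that instant.

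Both parts of the theorem then follow at once. For (b), Theorem \ref{theoModifiedSystem}(c)(i) gives $\tilde x(t) \to TE$ for every $x_0$, so the sandwich yields $x(t) \to TE$, hence global attractivity. Local asymptotic stability of $TE$ for (\ref{EqModTemp}) is inherited because in a neighbourhood of the origin the system lies in the male scarcity region, where the linearisation at $TE$ coincides with the one computed in Theorem \ref{theoScarceWithMat}(b). Together these two facts give GAS on $\mathbb{R}^4_+$. For (a), if $x_0 < \tilde E^{(1)}$ then by Theorem \ref{theoModifiedSystem}(c)(ii) $\tilde x(t) \to TE$, and again the sandwich forces $x(t) \to TE$, placing $x_0$ in the basin of attraction of $TE$ for (\ref{EqModTemp}).

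The main obstacle is precisely the non-monotonicity of (\ref{EqModTemp}) caused by the $-\nu_Y\min\{\gamma M/(Y+Y_P),1\}Y$ term in the $Y$ equation, which prevents any direct application of the monotone dynamical system theory that powered all previous theorems. The cooperative upper envelope $\tilde h$ and the Kamke-Müller comparison principle are the circumvention; the only technical care required is to verify the pointwise inequality $\tilde h \ge h$ uniformly across both branches of the $\min$ and to handle initial conditions outside the absorbing set $\Omega_K$, both of which are straightforward once the envelope has been constructed.
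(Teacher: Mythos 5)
Your proposal is correct and follows essentially the same route as the paper: dominate the non-monotone system (\ref{EqModTemp}) by the cooperative auxiliary system (\ref{Sys_WithMAT_eta}) via the comparison principle of Theorem~\ref{Thm_Monotonicity}, then transfer the basin-of-attraction conclusions of Theorem~\ref{theoModifiedSystem}. Your explicit verification of $\tilde h \ge h$ across both branches of the $\min$ and the handling of initial data outside $\Omega_K$ merely spell out details the paper leaves implicit.
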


\begin{proof}
Using Theorem~\ref{Thm_Monotonicity} with $x$ being the solution of (\ref{Sys_WithMAT_eta}) and $y$ being the solution of (\ref{EqModTemp}) we obtain that any solution of (\ref{Sys_WithMAT_eta}) is an upper bound of the solution of (\ref{EqModTemp}) with the same initial point. This implies that the basin of attraction of $TE$ as an equilibrium of (\ref{EqModTemp}) contains the sets indicated in Theorem~\ref{theoModifiedSystem} c) (i) and ii)), thus proving a) and b) respectively.
\end{proof}

Theorem~\ref{Thm_GAS_TE} characterises the benefit from the control effort represented by $Y_P$ as follows:

\begin{itemize}
\item Increasing the effort $Y_P$ in the range $0<Y_P<Y_P^{**}$ does not lead to elimination of an established population. In fact, as shown on Figure~\ref{Fig_BifurcationDiagram}, the reduction is not proportional to the effort. However, $\tilde{E}^{(1)}$ increases at least linearly with $\tilde{I}^{(1)}$. Hence, increasing $Y_P$ enlarges the basin of attraction of $TE$, providing better opportunity for controlling an invading population.
\item Effort $Y_P$ stronger than $\tilde{Y}_P^{**}$ eliminates any established or invading population.
\end{itemize}

\section{Numerical Simulation and Discussion}
We use numerical simulations to illustrate the results of Theorems~\ref{Thm_SummaryEqModTemp} and~\ref{Thm_GAS_TE} and discuss the biological meaning of the results. The numerical simulations are done using the \textbf{ode23tb} solver of Matlab~\cite{MATLAB2012} which solves system of stiff ODEs using a trapezoidal rule and second order backward differentiation scheme (TR-BDF2)~\cite{bank1985transient,hosea1996analysis}. The values of the parameters used for the numerical simulations are those of Table~\ref{Tab_ParaValues}.

\

Using (\ref{NoMAT_BON}), we compute the basic offspring number, $\mathcal{N}_0=122$. $\mathcal{N}_0>1$, therefore, the population establishes to the positive endemic equilibrium $EE^*$ which we have shown to be GAS on $\mathbb{R}^4_+\setminus \{TE\}$ (Theorem~\ref{Prop_PosiDynSysdelta} c)). Figure~\ref{Fig_ExpNoControl} represents the trajectories of a set of solutions of system (\ref{EqModTemp0}), or equivalently system (\ref{EqModTemp}) with $Y_P=0$ and $\alpha=0$, in the $M\times(Y+F)$ plane. The dots represent the points at which the solutions are initiated. The solutions initiated on $\mathbb{R}^4_+\setminus \{TE\}$ all converge to the point $EE^*=(992, 319, 1407, 1498)^T$, represented by the green square. Here, $TE$ is also an equilibrium, but it is unstable (Theorem~\ref{Prop_PosiDynSysdelta} c)), and therefore it is not represented in Figure~\ref{Fig_Exp1}.
\begin{figure}[H]
\centering
\includegraphics[width=7cm]{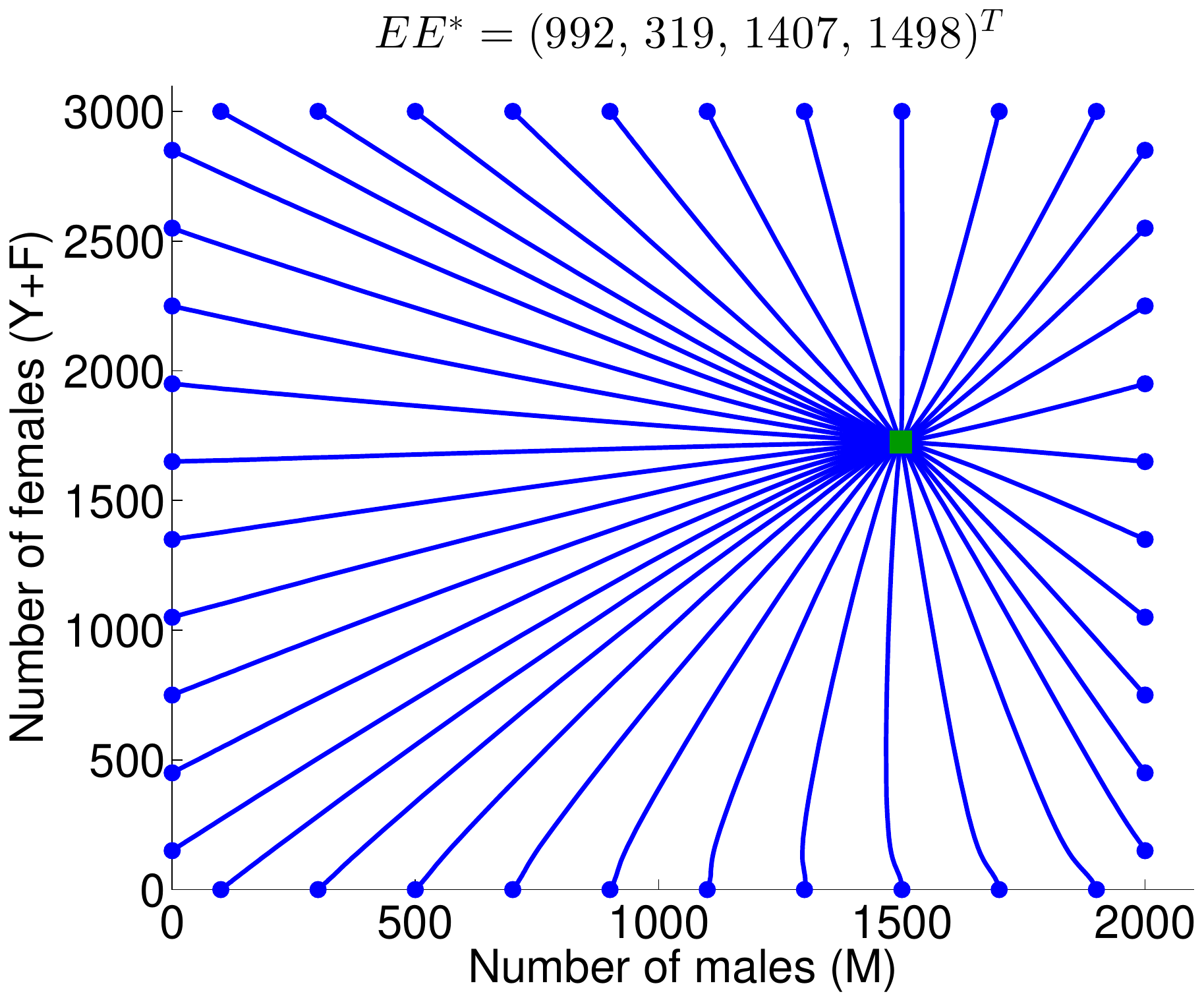}
\caption{Trajectories of a set solutions of system (\ref{EqModTemp0}) in the $M\times (Y+F)$-plane initiated at the dots. The green square represents the stable equilibrium $EE^*$.}
\label{Fig_ExpNoControl}
\end{figure}
In the following, we confirm numerically the theoretical results of Theorems~\ref{Thm_SummaryEqModTemp} and~\ref{Thm_GAS_TE} with respect to the values of the mating disruption thresholds mentioned in those theorems, $Y_P^*$ and $Y_P^{**}$. We also investigate the impact of the trapping effort, $\alpha$, on the population.
Using (\ref{YPstar}) and solving numerically the system
\begin{equation}
\left\lbrace
\begin{array}{ccc}
\psi(I)&=&\eta(Y_P,I)\\
&&\\
\frac{d\psi}{d I}(I)&=&\frac{d\eta}{d I}(Y_P,I)
\end{array}
\right.\nonumber
\end{equation}
with $\psi$ and $\eta(Y_P,\cdot)$ as defined in (\ref{EqPoly}), we obtain respectively the thresholds values $Y_P^*$ and $Y_P^{**}$ as functions of $\alpha$ as represented in Figure~\ref{Fig_ExpYpstarstarVsAlpha}.
\begin{figure}[H]
\begin{center}
\subfigure[$Y_P^*$ as a function of $\alpha$]{
\resizebox*{7cm}{!}{\includegraphics[width=7cm]{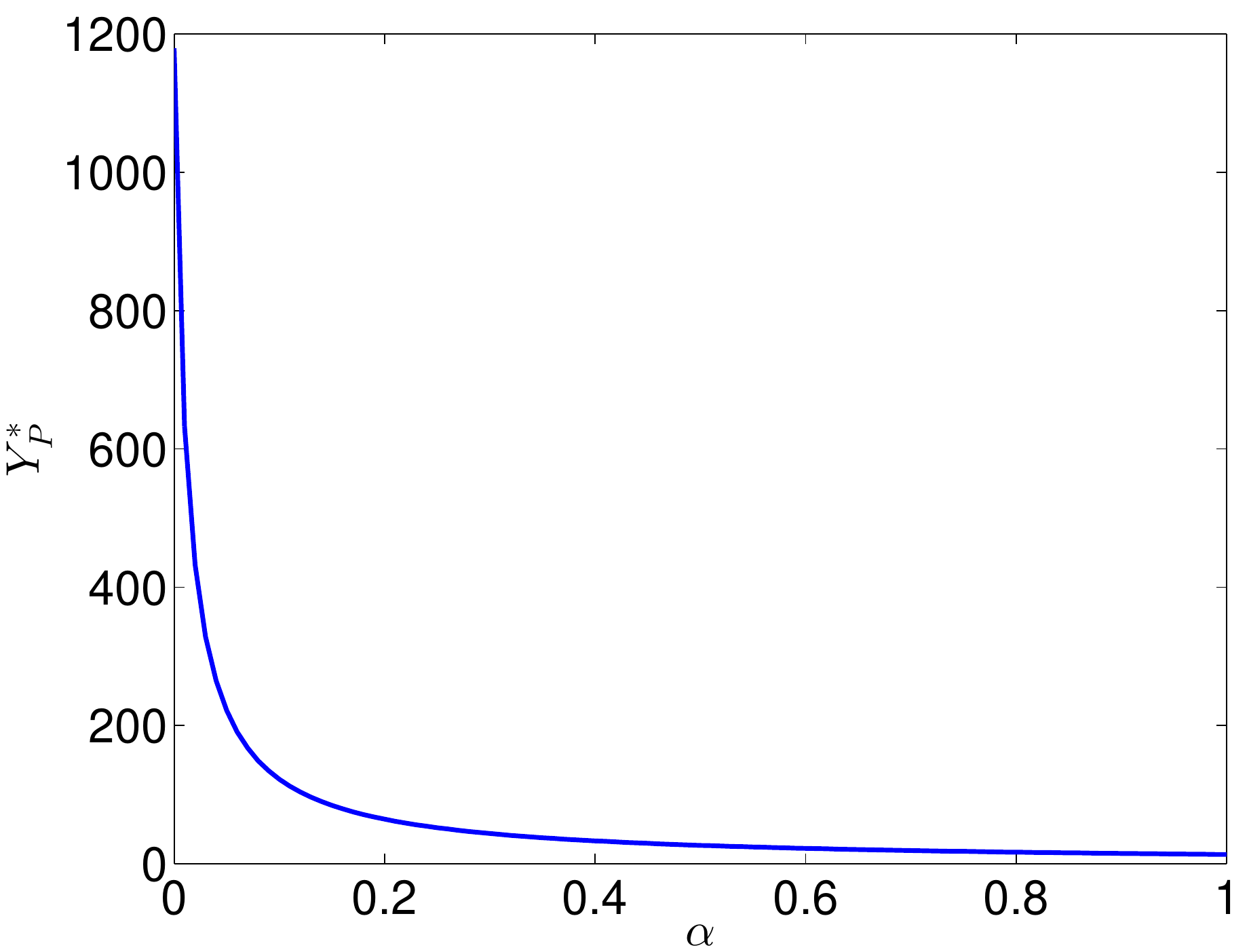}}}
\subfigure[$Y_P^{**}$ as a function of $\alpha$]{
\resizebox*{7cm}{!}{\includegraphics[width=7cm]{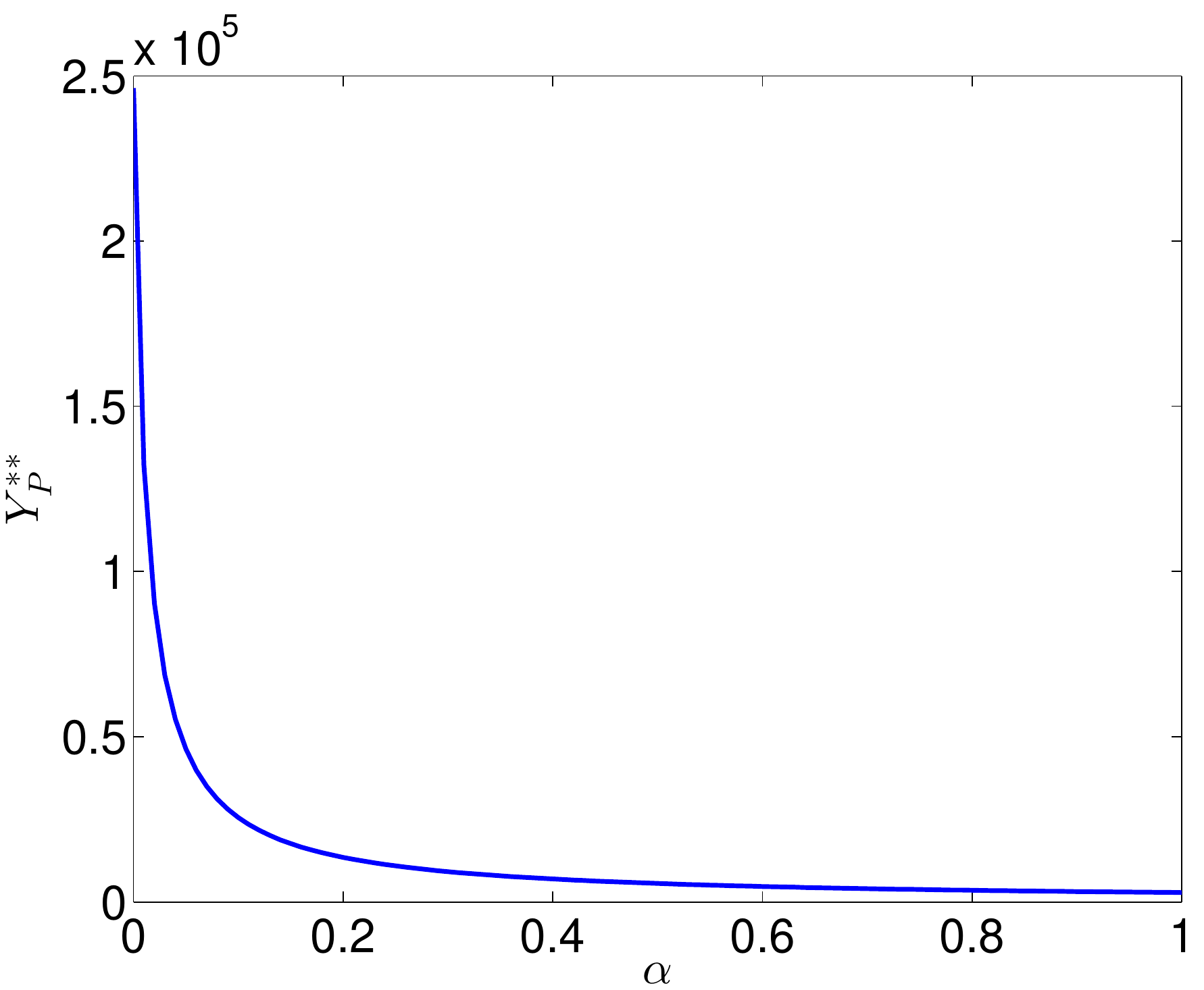}}}\hspace{5pt}
\caption{Mating disruption thresholds as function of the trapping parameter $\alpha$.} 
\label{Fig_ExpYpstarstarVsAlpha}
\end{center}
\end{figure} 
One can observe that a small trapping effort reduces the mating disruption thresholds in a non-linear manner. Adding trapping to mating disruption allows to reduce the amount of the lure for equivalent control efficiency.  
In particular, for $\alpha=0$, we have $Y_P^*=5673$ and $Y_P^{**}=987735$, while for $\alpha=0.1$, we have $Y_P^*=588$ and $Y_P^{**}=102462$. Thus, increasing the trapping effort by $10\%$ reduces both $Y_P^{*}$ and $Y_P^{**}$ by $90\%$. 

\

Figure~\ref{Fig_Exp1} illustrates the trajectories of the solutions of system (\ref{EqModTemp}) when the mating disruption level is below the threshold $Y_P^*$ for $\alpha=0$ and $\alpha=0.1$. The dots represent the initial points and the green squares represent the asymptotically stable equilibria. In this case, the system has one positive equilibrium, $EE^{\#}$ (Theorem~\ref{Thm_SummaryEqModTemp} b)), with value given in the figure. One can observe that when there is no trapping, $EE^{\#}=EE^*$ (Figure~\ref{Fig_Exp1} (a)). This means that the positive endemic equilibrium of the population is not affected by the control. When trapping occurs, we observe in Figure~\ref{Fig_Exp1} (b) that the positive equilibrium is shifted to the left. In fact, the control allows to reduce the number of males but not sufficiently to disrupt the fertilisation of the females. Therefore, the control is not efficient on an established population as the number of females at equilibrium is not reduced.
\begin{figure}[H]
\begin{center}
\subfigure[$\alpha=0$]{
\resizebox*{7cm}{!}{\includegraphics{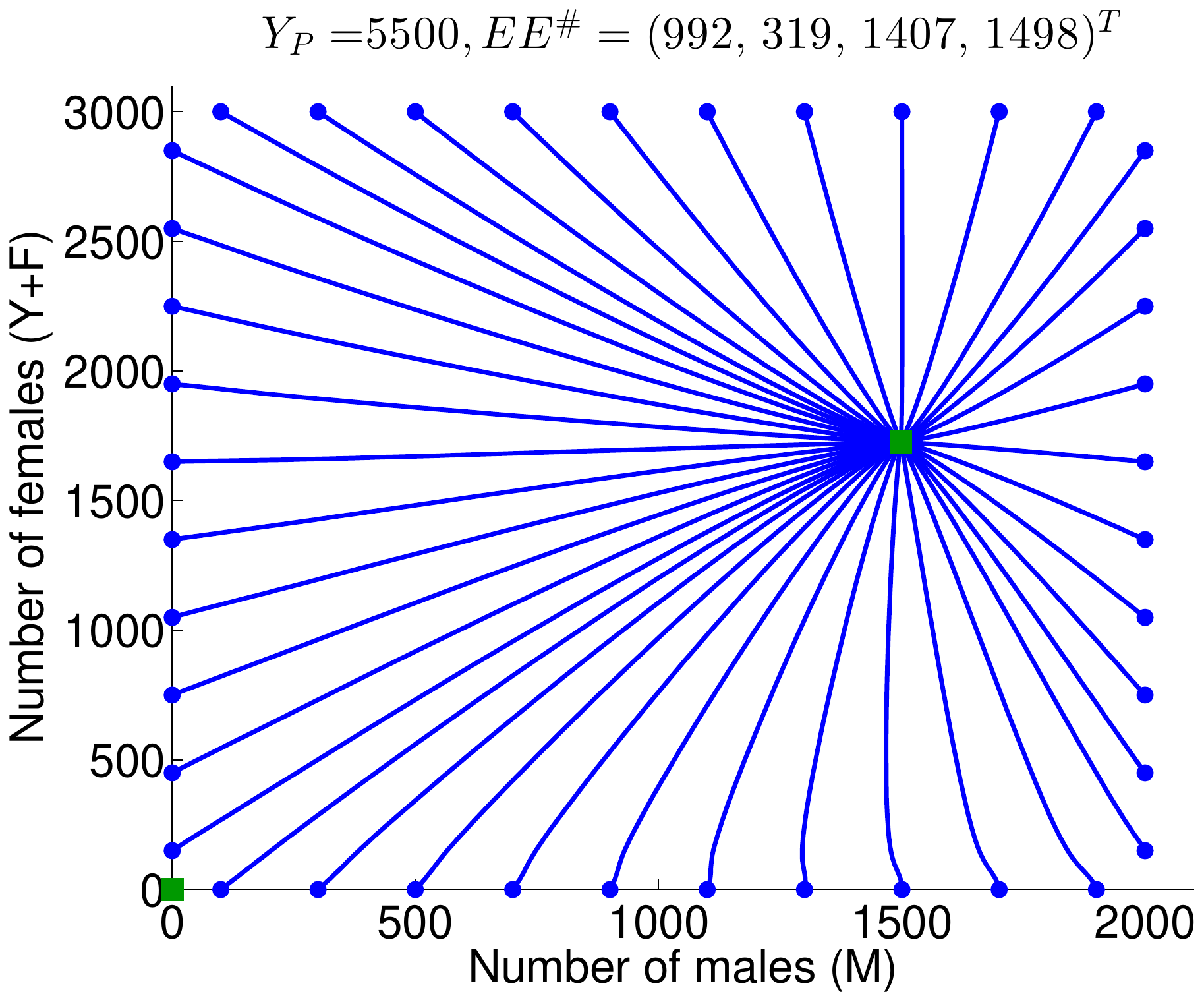}}}
\hspace{0.2cm}
\subfigure[$\alpha=0.1$]{
\resizebox*{7cm}{!}{\includegraphics{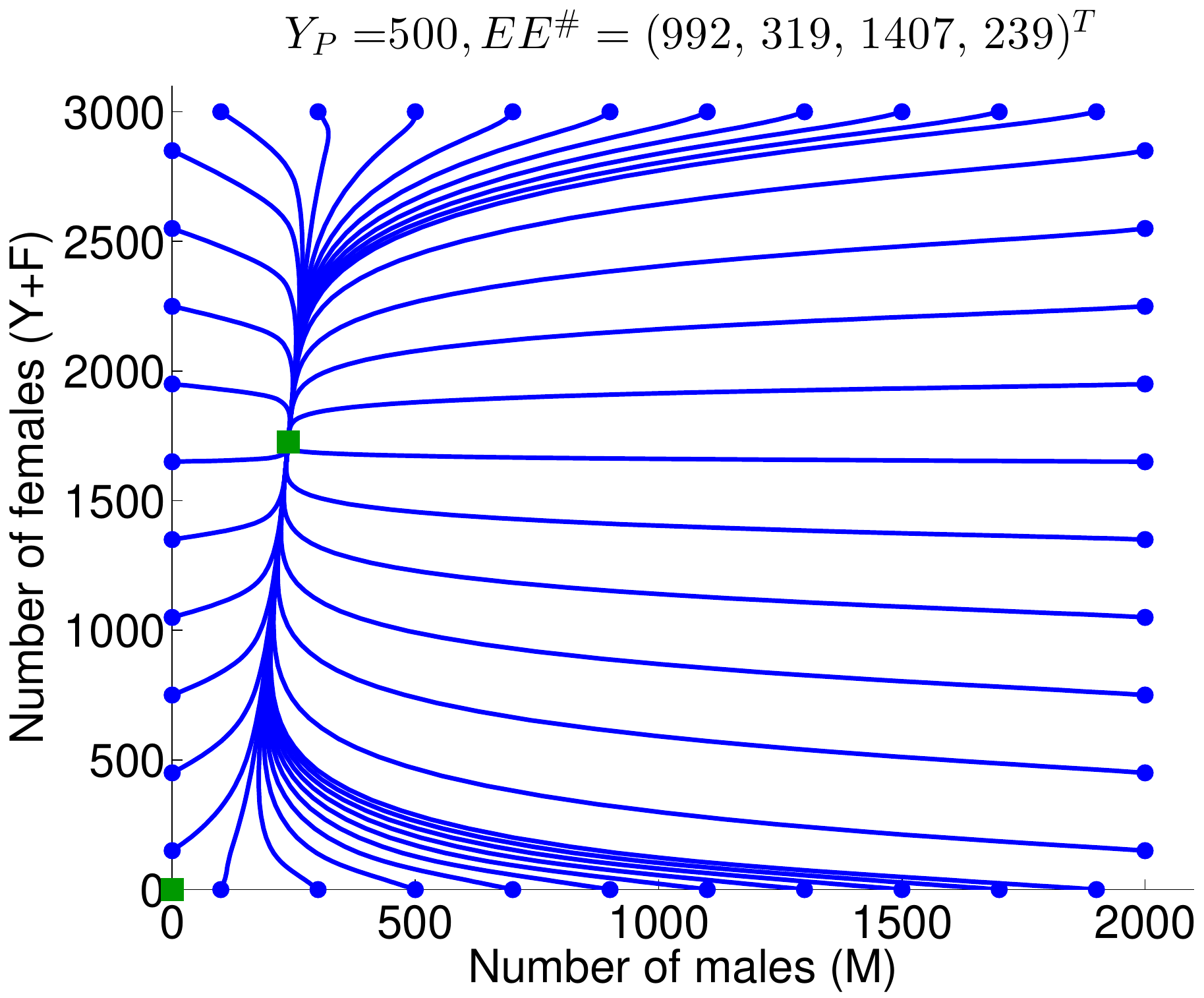}}}\hspace{5pt}
\caption{Trajectories of a set solutions of system (\ref{EqModTemp}) in the $M\times (Y+F)$-plane initiated at the dots. The green squares represent the asymptotically stable equilibria $TE$ and $EE^{\#}$.} 
\label{Fig_Exp1}
\end{center}
\end{figure}
However, when $Y_P>0$, $TE$ is asymptotically stable, which means that a population can be controlled if it is small enough. Figure~\ref{Fig_Exp1bis} (a), represents the basin of attraction of $TE$ (the red dots) for a small population in the same setting as for the experiments in Figure~\ref{Fig_Exp1} (a). We observe that there is a set of solutions, for which the initial population is small enough, that converge to $TE$, hence, a small population can be eradicated for $Y_P>0$. Further, as shown in Figure~\ref{Fig_Exp1bis} (b), adding trapping ($\alpha=0.1$) with the same mating disruption effort as for Figure~\ref{Fig_Exp1bis} (a), enlarges the basin of attraction of $TE$. In other words, for identical mating disruption effort, larger populations can be drawn to extinction when trapping occurs.   
\begin{figure}[H]
\begin{center}
\subfigure[$\alpha=0$.]{
\resizebox*{7cm}{!}{\includegraphics{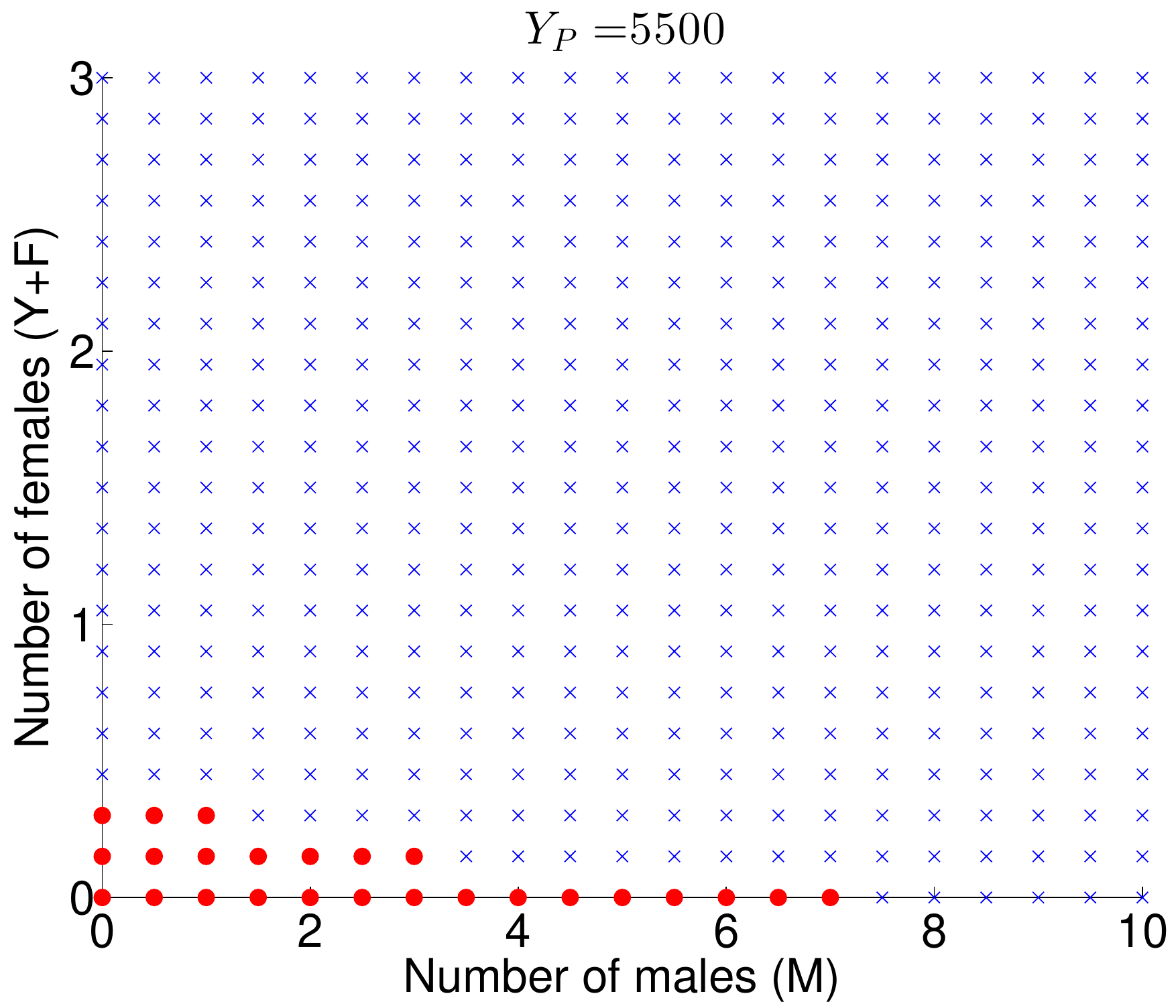}}}\hspace{5pt}
\subfigure[$\alpha=0.1$.]{
\resizebox*{7cm}{!}{\includegraphics{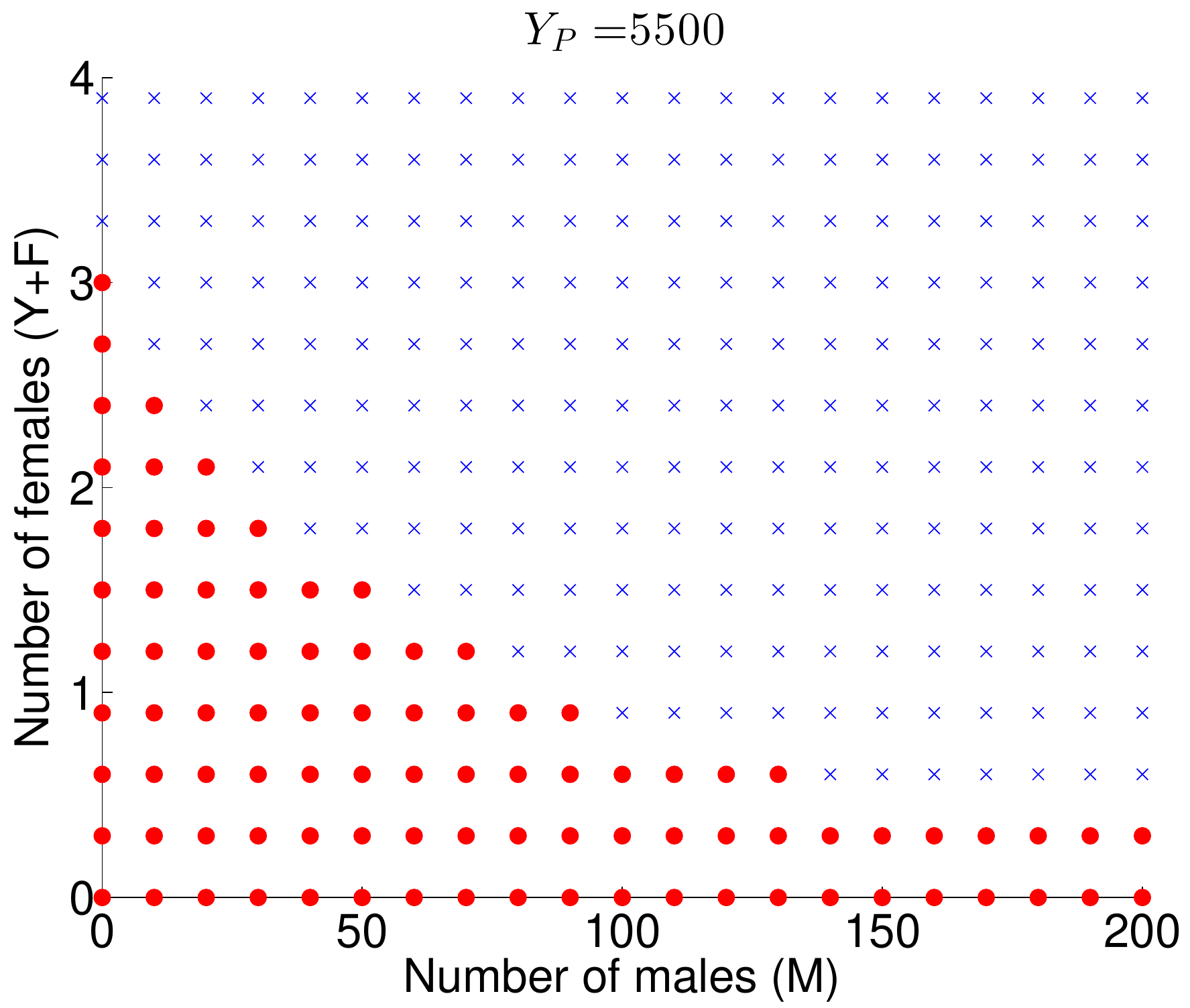}}}
\caption{Effect of $\alpha$ on the basin of attraction of $TE$. The solutions initiated at the points represented by the red dots converge to $TE$ while the blue crosses represent initial points for which the solution converges to $EE^{\#}$.} 
\label{Fig_Exp1bis}
\end{center}
\end{figure}

\

In order to observe a reduction in the number of females, the mating disruption effort has to be increased above the threshold $Y_P^*$. This is the case in Figure~\ref{Fig_Exp2} ((a) and (b)), where $Y_P=0.9999\times Y_P^{**}<Y_P^*$, for $\alpha=0$ and $\alpha=0.1$, respectively. We can see that there is a positive asymptotically stable equilibrium, $EE^{(2)}_{MD}$ represented with a green square (Theorems~\ref{Thm_SummaryEqModTemp} c)).  
The blue lines represent the trajectories of the solutions of (\ref{EqModTemp}) initiated at the blue points which converge to $EE^{(2)}_{MD}$, while the red lines represent the trajectories of the solutions initiated at the red points which converge to $TE$.
With $Y_P>Y_P^*$, the number of females at equilibrium is reduced, however, the impact of the control is not proportional to the effort. Indeed comparing the experiments in Figures~\ref{Fig_Exp1} and Figures~\ref{Fig_Exp2}, we observe that in order to reduce the number of females at equilibrium by $49\%$, the amount of the lure has to be increased by $17857\%$ when $\alpha=0$ and by $20390\%$ when $\alpha=0.1$.   
\begin{figure}[H]
\begin{center}
\subfigure[$\alpha=0$.]{
\resizebox*{7cm}{!}{\includegraphics{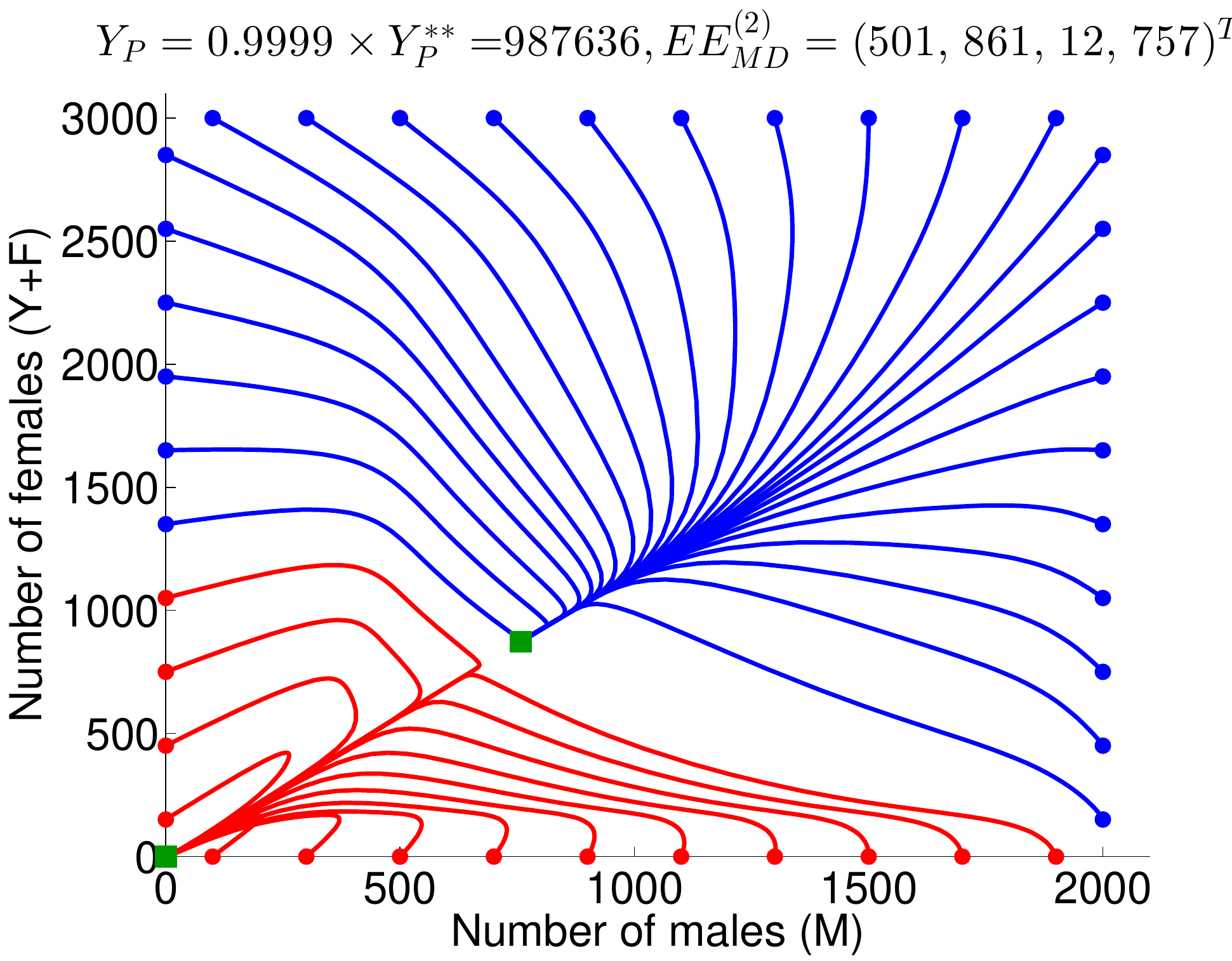}}}\hspace{5pt}
\subfigure[$\alpha=0.1$]{
\resizebox*{7cm}{!}{\includegraphics{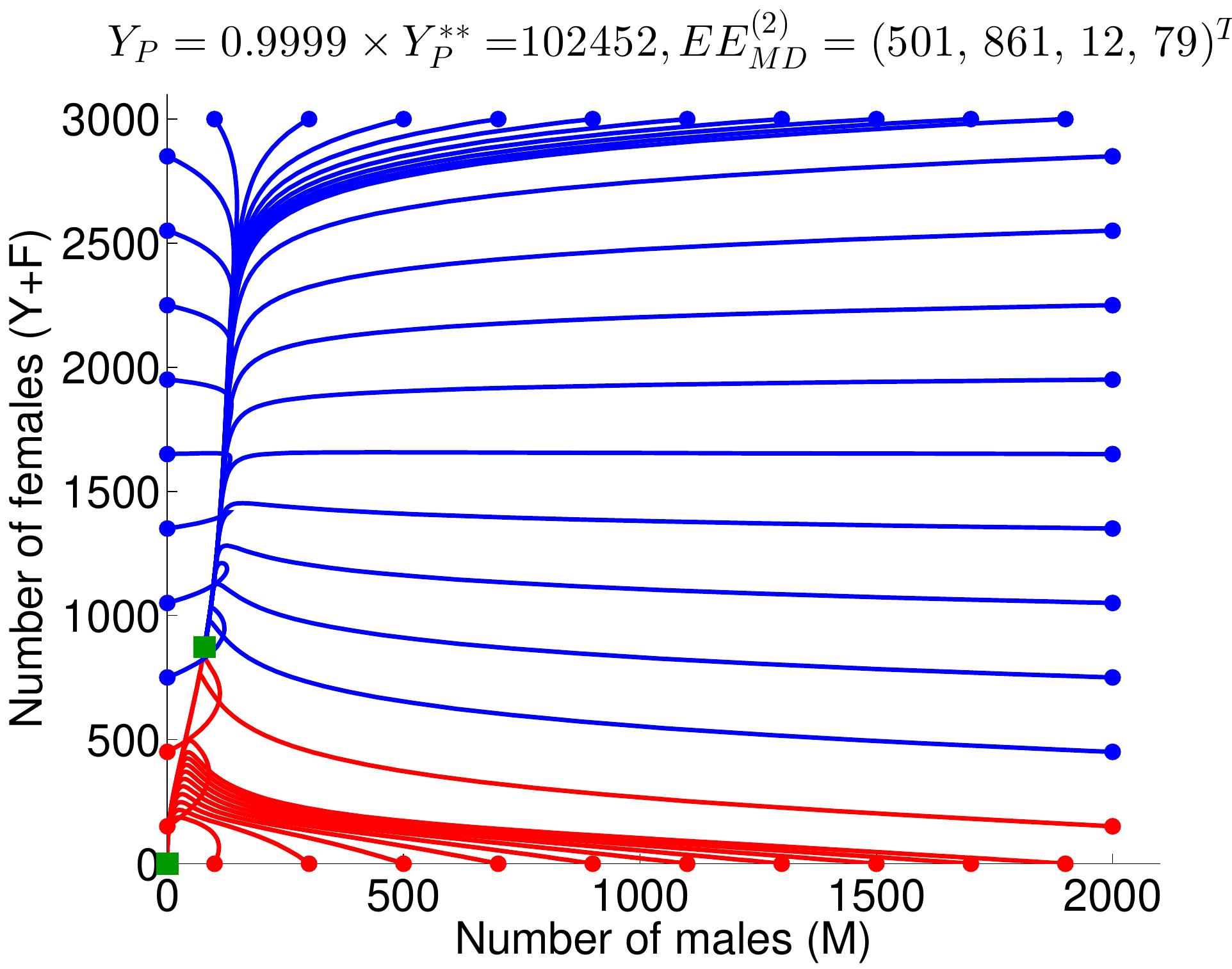}}}
\caption{Trajectories of a set solutions of system (\ref{EqModTemp}) in the $M\times (Y+F)$-plane initiated at the dots. The green squares represent the asymptotically stable equilibria.} 
\label{Fig_Exp2}
\end{center}
\end{figure}
Further, comparing the red dots in Figure~\ref{Fig_Exp1bis} and the red trajectories in Figure~\ref{Fig_Exp2}, we can see that the basin of attraction of the trivial equilibrium becomes larger as the value of $Y_P$ increases.

\

Finally, $Y_P>Y_P^{**}$ allows a full control of the population leading it to extinction no matter how big it is. In Figure~\ref{Fig_Exp3}, $Y_P$ is above $Y_P^{**}$ by $0.01\%$, and we can see that all the trajectories converge to $TE$. This shows the GAS nature of $TE$ when $Y_P>Y_P^{**}$.

\

\begin{figure}[H]
\begin{center}
\subfigure[$\alpha=0$.]{
\resizebox*{7cm}{!}{\includegraphics{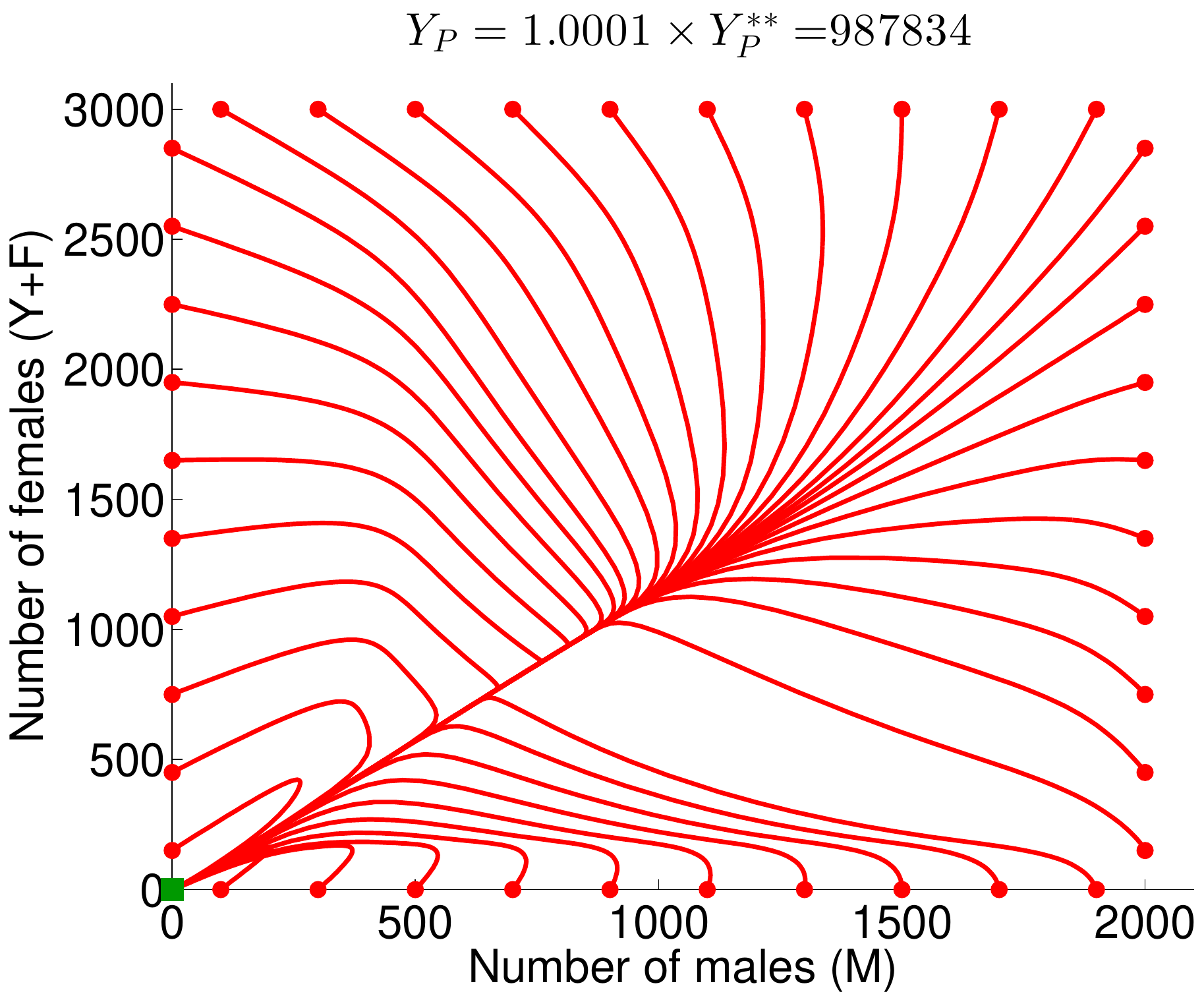}}}\hspace{5pt}
\subfigure[$\alpha=0.1$]{
\resizebox*{7cm}{!}{\includegraphics{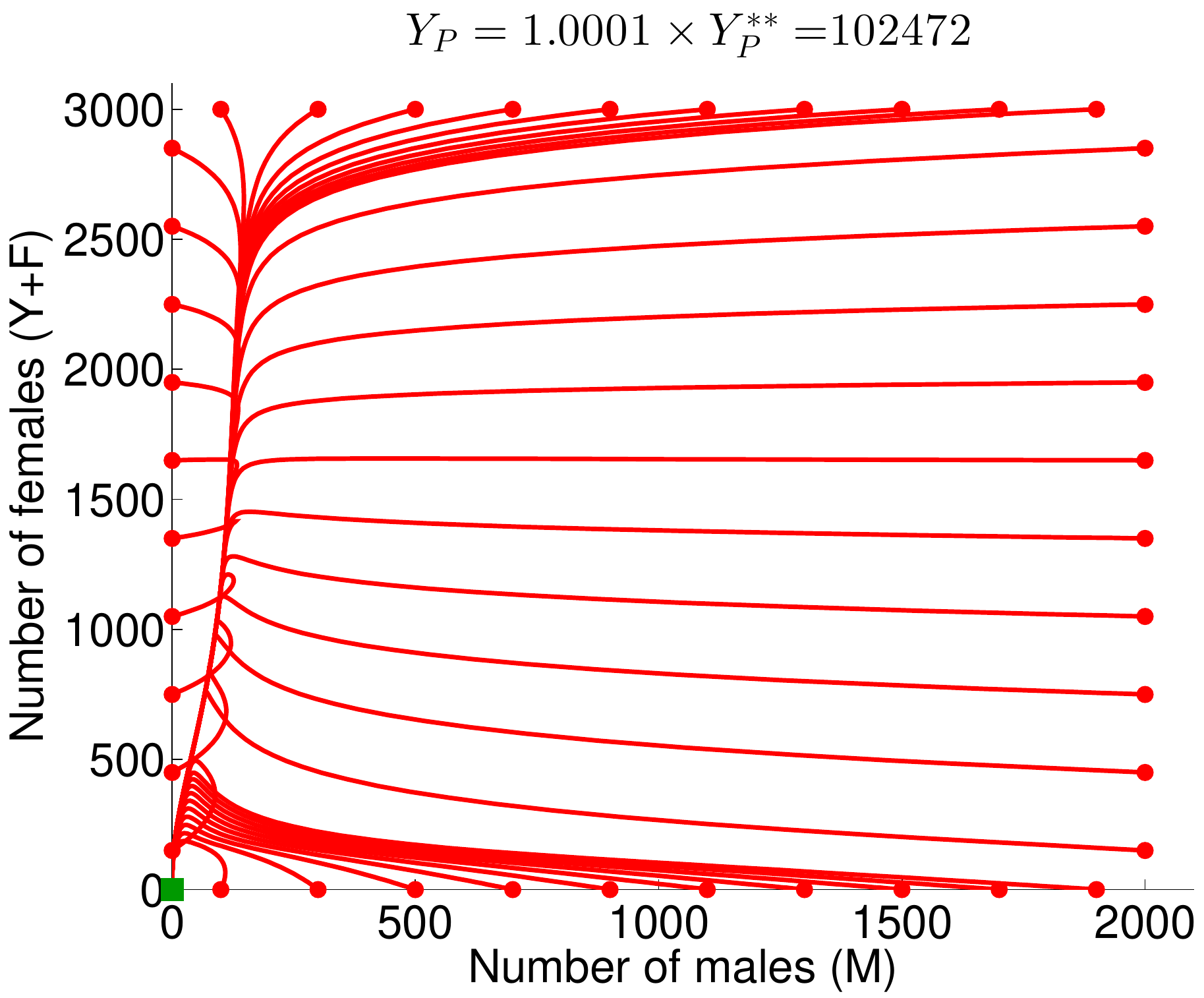}}}
\caption{Trajectories of a set solutions of system (\ref{EqModTemp}) in the $M\times (Y+F)$-plane initiated at the dots. The green squares represent the asymptotically stable equilibria.} 
\label{Fig_Exp3}
\end{center}
\end{figure}

Note that in Theorem~\ref{Thm_GAS_TE}, the GAS of $TE$ is established for $Y_P>\tilde{Y}_P^{**}$, however, we show numerically that the GAS property of $TE$ holds for $Y_P>Y_P^{**}$. From Figure~\ref{Fig_Exp4} we can see that the error between $\tilde{Y}_P^{**}$ and $Y_P^{**}$ is of order $10^4$ which is small compared to the values of $\tilde{Y}_P^{**}$ and $Y_P^{**}$ which are of order $10^6$.
\begin{figure}[H]
\centering
\includegraphics[width=7cm]{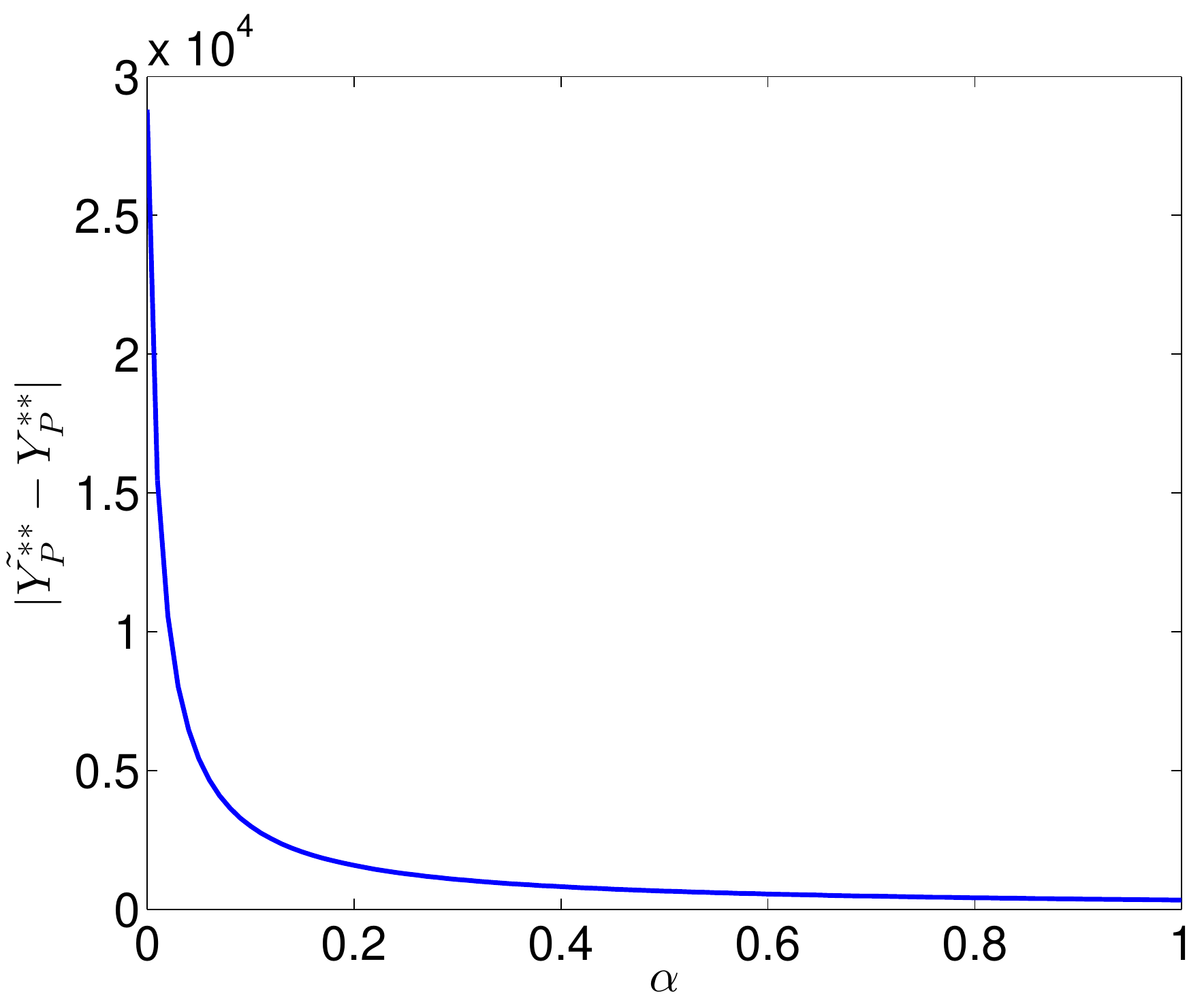}
\caption{Error between the thresholds $\tilde{Y}_P^{**}$ and $Y_P^{**}$ as function of $\alpha$.}
\label{Fig_Exp4}
\end{figure}

\section*{Conclusion and Perspectives}

Controlling insect pest population in environmentally respectful manner is a main challenge in IPM programs. Mating disruption using female sex-pheromone based lures falls within IPM requirements as it is species specific and leaves no toxic residues in the produce grown. In this work, we build a generic model, governed by a system of ODEs to simulate the dynamics of a pest population and its response to mating disruption control with trapping. From the theoretical analysis of the model, we identify two threshold values of biological interest for the strength of the lure. The first threshold, $Y_P^*$, corresponds to a minimum amount of pheromone necessary for mating disruption to have an effect on the population. However, we show that the control can only be fully efficient, that is, drive an established population to extinction, for an amount of pheromone above a second threshold, $Y_P^{**}$. We also show asymptotic stability of the trivial equilibrium, $TE$, whenever $Y_P>0$. In other words, a small amount of pheromone can be efficient on a very small population, like at invasion stage. Despite the different modelling approach and control method considered, similar results were found by Barclay and Mackauer~\cite{barclay1980sterile}, where a threshold for the size of the release of sterile males was identified, below which two positive equilibria were found, the larger one being stable and the smaller one being unstable, and above which SIT control is fully effective. These theoretical results are consistent with field observation, where the failure of mating disruption is often attributed to a wrong dosage of the pheromone and/or to an excessive population density~\cite{ambrogi2006efficacy,michereff2000initial}.
Further, we show that increasing the capture efficiency of the traps can reduce considerably these threshold values. From practical point of view this suggests that there is an optimal combination of the strength of the pheromone attractant and the capture efficiency of the traps to optimise the control of the pests in terms of population control and cost. These results support the conclusions of Yamanaka~\cite{yamanaka2007mating} stipulating that in the case where the lure used for mating disruption is strong enough, then additional trapping is not necessary, while otherwise, the author advise to rather focus the effort on the trapping efficiency. In a more realistic setting, this optimised control corresponds to an optimal setting of traps releasing the pheromone. Alternative approaches, such as individual based models (IBM), have been considered to study the impact of mating disruption, incorporating a spatial component where the attraction of males is governed by the pheromone plume~\cite{yamanaka2003individual}, or by the effective attraction radius (EAR) which corresponds to the probability of finding the source~\cite{byers2007simulation}. A next step for this work is to add a spatial component to investigate how traps should be set, how many should be used, and how far from each other they should be positioned. Investigation on trap settings and their interactions have been studied in a parallel work dealing with parameter estimation~\cite{dufourd2013parameter} and extended in~\cite{dufourd2015}, where spatio-temporal trapping models are governed by advection-diffusion-reaction processes. 

Another perspective is the validation of the model using field data obtained by Mark-Release-Recapture (MRR) experiments. MRR consists of releasing marked insects in the wild population and recapture them in traps. In such experiments, the number of insects released, as well as the position of the release and the position of the traps are known. Therefore, comparing the trapping data obtained with the model with those obtained using MRR experiments would enable us to validate the model. Then, following the protocol proposed in~\cite{dufourd2015}, unknown parameters of the model could be estimated.

\section*{Acknowledgements}
This work has been supported by the French Ministry of Foreign Affairs and International Development and the South African National Research Foundation in the framework of the PHC PROTEA 2015 call. The support of the DST-NRF Centre of Excellence in Mathematical and Statistical Sciences (CoE-MaSS) towards this research is hereby acknowledged. Opinions expressed and conclusions arrived at, are those of the authors and are not necessarily to be attributed to the CoE.

\section*{Appendix A: Computation of the basic offspring number}
\label{AppendixR0}
The basic offspring number, sometimes called ``net reproduction rate or ratio''~\cite{birch1948intrinsic}, is defined as the expected number of females originated by a single female in a lifetime~\cite{sallet2010inria}. The computation can be done using a similar method as for the computation of the ``basic reproduction number'' in epidemiological model which determines the number if secondary infections produced by a single infectious individual~\cite{van2002reproduction,diekmann2009construction}.
Let
\begin{itemize}
\item $\mathcal{R}_i(x)$ be the rate of recruitment of new individuals in compartment $i$,
\item $\mathcal{T}_i^+(x)$ be the transfer of individuals into compartment $i$, and
\item $\mathcal{T}_i^-(x)$ be the transfer of individuals out of compartment $i$.
\end{itemize}
Our system can be written in the form:
\begin{equation}
\dot{x}_i=\mathcal{R}_i(x)-\mathcal{T}_i(x), \nonumber
\end{equation}
with
\begin{equation}
\mathcal{T}_i=\mathcal{T}_i^-(x)-\mathcal{T}_i^+(x),\nonumber
\end{equation}
$i=1,..,4$.
When $\gamma M>Y+Y_P$, system (\ref{EqModTemp}) is reduced to system (\ref{EqModTempAbundance}) and it can be written as
\begin{equation}
\dot{x}=\mathcal{R}(x)-\mathcal{T}(x),\nonumber
\end{equation}
with
\begin{equation}
\mathcal{R}(x)=
\left(
\begin{array}{c}
b(1-\frac{I}{K})F\\
0\\0\\0\\
\end{array}
\right),\nonumber
\end{equation}
end
\begin{equation}
\mathcal{T}(x)=
\left(
\begin{array}{c}
(\nu_I+\mu_I)I\\
(\nu_Y+\mu_Y)Y-r\nu_II-\delta F\\
(\delta+\mu_F)F-\nu_Y Y\\
\mu_M M-(1-r)\nu_I I\\
\end{array}
\right).\nonumber
\end{equation}
To obtain the next generation operator, we compute the Jacobian matrices of $\mathcal{R}$ and $\mathcal{T}$, respectively, $J_{\mathcal{R}}$ and $J_{\mathcal{T}}$. Then, from~\cite{heesterbeek2000mathematical,van2002reproduction}, the next generation operator is defined as $RT^{-1}$
where $R=J_{\mathcal{R}}(TE)$ and $T=J_{\mathcal{T}}(TE)$.
Here, we have
\begin{equation}
R=
\left(
\begin{array}{cccc}
0 & 0 & b & 0\\
0 & 0 & 0 & 0\\
0 & 0 & 0 & 0\\
0 & 0 & 0 & 0\\
\end{array}
\right), \textrm{ and }\quad
T=
\left(
\begin{array}{cccc}
\nu_I+\mu_I & 0 & 0 & 0\\
r\nu_I & \nu_Y+\mu_Y & -\delta & 0\\
0 & -\nu_Y & \delta+\mu_F & 0\\
(1-r)\nu_I & 0 & 0 & \mu_M\\
\end{array}
\right).\nonumber
\end{equation}
The basic offspring number is obtained by computing the spectral radius of the next generation operator~\cite{van2002reproduction,diekmann2009construction}:
\begin{equation}
\mathcal{N}_{0}=\rho(RT^{-1})=\frac{br\nu_I \nu_Y}{(\mu_I+\nu_I)\left((\nu_Y+\mu_Y)(\delta+\mu_F)-\delta \nu_Y\right)}.\nonumber
\end{equation}

\section*{Appendix B: Computation of the endemic equilibrium}
\label{AppendixEE}
We seek for $(I^*, Y^*, F^*, M^*)$ such that
\begin{equation}
\left\lbrace
\begin{array}{lcl}
\frac{dI^*}{dt} & = & 0,\\
\frac{dY^*}{dt} & = & 0,\\
\frac{dF^*}{dt} & = & 0,\\
\frac{dM^*}{dt} & = & 0,
\end{array}
\right.\nonumber
\end{equation}
that is
\begin{equation}
\left\lbrace
\begin{array}{lcl}
 b\left(1-\frac{I^*}{K}\right)F^*-(\nu_I+\mu_I)I^*& = & 0,\\
 r\nu_I I^*-(\nu_Y+\mu_Y)Y^*+\delta F^*& = & 0,\\
 v_Y Y^* -(\delta+\mu_F)F^*& = & 0,\\
 (1-r)\nu_I I^* -\mu_M M^*& = & 0.
\end{array}
\right.\label{AppendEq}
\end{equation}
Thus, starting from the bottom:
\begin{eqnarray}
 & & M^* =  \frac{(1-r)\nu_I}{\mu_M}I^*,\nonumber
 \end{eqnarray}
\begin{eqnarray}
 & & F^* =  \frac{\nu_Y Y^*}{(\delta+\mu_F)},\nonumber
 \end{eqnarray}
 \begin{eqnarray}
 & & r\nu_I I^*-(\nu_Y+\mu_Y)Y^*+ \frac{\delta \nu_Y Y^*}{(\delta+\mu_F)} =  0 ,\nonumber\\ \Leftrightarrow
  & & Y^*=\frac{r\nu_I(\delta+\mu_F)}{(v_Y+\mu_Y)(\delta+\mu_F)-\delta \nu_Y}I*, \nonumber
  \end{eqnarray}
  and therefore,
\begin{eqnarray}
 & & F^* =  \frac{r\nu_I\nu_Y}{(v_Y+\mu_Y)(\delta+\mu_F)-\delta \nu_Y}I*. \nonumber
 \end{eqnarray}
Using the first equation of (\ref{AppendEq}),
  \begin{eqnarray}
  & & b\left(1-\frac{I^*}{K}\right)F^*-(\nu_I+\mu_I)I^*=0 \nonumber\\ \Leftrightarrow
  & & b\left(1-\frac{I^*}{K}\right)\frac{\nu_Y Y^*}{(\delta+\mu_F)}-(\nu_I+\mu_I)I^*=0 \nonumber\\ \Leftrightarrow
    & & b\left(1-\frac{I^*}{K}\right)\frac{\nu_Y}{(\delta+\mu_F)}\frac{r\nu_I(\delta+\mu_F)}{(v_Y+\mu_Y)(\delta+\mu_F)-\delta \nu_Y}I^*-(\nu_I+\mu_I)I^*=0 \nonumber\\ \Leftrightarrow
    & & b\left(1-\frac{I^*}{K}\right)\frac{r\nu_I\nu_Y}{(v_Y+\mu_Y)(\delta+\mu_F)-\delta \nu_Y}-(\nu_I+\mu_I)=0 \nonumber\\\Leftrightarrow
    & & b\left(1-\frac{I^*}{K}\right)\frac{r\nu_I\nu_Y}{(v_Y+\mu_Y)(\delta+\mu_F)-\delta \nu_Y}-\frac{(\nu_I+\mu_I)((v_Y+\mu_Y)(\delta+\mu_F)-\delta \nu_Y)}{(v_Y+\mu_Y)(\delta+\mu_F)-\delta \nu_Y}=0 \nonumber\\ \Leftrightarrow
    & & b\left(1-\frac{I^*}{K}\right)r\nu_I\nu_Y-(\nu_I+\mu_I)((v_Y+\mu_Y)(\delta+\mu_F)-\delta \nu_Y)=0 \nonumber\\ \Leftrightarrow
     & & br\nu_I\nu_Y I^*=(-(\nu_I+\mu_I)((v_Y+\mu_Y)(\delta+\mu_F)-\delta \nu_Y)+br\nu_I\nu_Y)K \nonumber\\ \Leftrightarrow
     & & I^*=\frac{-(\nu_I+\mu_I)((v_Y+\mu_Y)(\delta+\mu_F)-\delta \nu_Y)+br\nu_I\nu_Y}{b\nu_I\nu_Y}K \nonumber\\ \Leftrightarrow
     & & I^*=\frac{-(\nu_I+\mu_I)((v_Y+\mu_Y)(\delta+\mu_F)-\delta \nu_Y)}{b\nu_I\nu_Y}K+K \nonumber\\ \Leftrightarrow
     & & I^*=\left(1-\frac{1}{\mathcal{N}_{\delta}}\right)K. \nonumber
\end{eqnarray}

\section*{Appendix C: Preliminaries on monotone dynamical systems}

Consider the autonomous system of ODEs
\begin{equation}
\frac{dx}{dt}=f(x)
\label{Sys_Prelim}
\end{equation}
where $f:D \longrightarrow \mathbb{R}^n$, $D\subset \mathbb{R}^n$. We assume that $f$ is locally Lipshitz so that local existence and uniqueness of the solution is assured.
We will use the following notations. Denote 
\[x(x_0,t)\]
the solution of (\ref{Sys_Prelim}) initiated in $x_0$.
Further, for $x, y\in\mathbb{R}_+^n$, we have:
$$x\leq y \Longleftrightarrow x_i\leq y_i, \forall i\in \{1,2,\dots,n\},$$

\begin{definition}
System (\ref{Sys_Prelim}) is said to be cooperative if for every $i=1,...,n$ the function $f_i(x)$ is monotone increasing with respect to all $x_j$, $j=1,...,n$, $j\neq i$.
\end{definition}

\begin{theorem} \label{TheoQuasiMon}
If $f$ is differentiable on $D$ then the system (\ref{Sys_Prelim}) is cooperative if and only if
\begin{equation}
\frac{\partial f_i}{\partial x_j}(x)\geq 0, \quad i\neq j, \quad x\in D.\nonumber
\end{equation}
\end{theorem}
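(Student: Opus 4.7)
The statement is a standard calculus fact recast in the language of cooperativity: componentwise monotonicity of $f$ in each off-diagonal variable is equivalent to non-negativity of the corresponding partial derivatives. The plan is to prove both implications by fixing an index pair $(i,j)$ with $i\neq j$, freezing all coordinates of $x$ except $x_j$, and reducing to a one-variable calculus argument on the resulting slice of $D$. For the domains of interest in the paper (orthants such as $\mathbb{R}^4_+$, and boxes like $\Omega_K$), these slices are intervals, so the reduction is unproblematic.

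For the implication ($\Leftarrow$), I would assume $\partial f_i/\partial x_j(x)\geq 0$ for all $x\in D$ and all $i\neq j$. Fixing $i\neq j$ and the coordinates $x_k$ for $k\neq j$, I define the scalar function $g(t)=f_i(x_1,\ldots,x_{j-1},t,x_{j+1},\ldots,x_n)$ on the interval obtained by intersecting $D$ with the line $\{x+te_j:t\in\mathbb{R}\}$. By hypothesis $g'(t)\geq 0$, so by the mean value theorem $g$ is nondecreasing, which is exactly the definition of cooperativity in Definition in the appendix.

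For the converse ($\Rightarrow$), assume $f_i$ is monotone nondecreasing in $x_j$ for every $i\neq j$. Fix $x\in D$ and choose $h>0$ small enough that $x+h e_j\in D$; such $h$ exists whenever $x$ is not on the boundary of $D$ in the $+e_j$ direction, which is again automatic on orthants for interior points and handled at boundary points by taking one-sided limits. The monotonicity assumption gives $f_i(x+he_j)-f_i(x)\geq 0$, so dividing by $h$ and passing to the limit yields $\partial f_i/\partial x_j(x)\geq 0$, as required.

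The main potential obstacle is purely a matter of bookkeeping about the domain $D$: one needs $D$ to be such that the coordinate slices through each $x\in D$ are intervals of positive length, so that the mean value theorem and the one-sided difference quotient argument are both legitimate. Since the paper only applies this theorem on convex sets like $\Omega_K$ and $\mathbb{R}^4_+$, this technicality does not actually bite, and no additional hypothesis beyond differentiability on $D$ is needed for the uses in this paper.
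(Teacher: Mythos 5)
Your argument is correct, but note that the paper itself offers no proof of this theorem at all: it appears in Appendix~C as part of the quoted ``basics of the theory of cooperative systems,'' i.e.\ as a standard characterization (essentially the Kamke--M\"uller condition) taken for granted alongside the cited results from Smith and Walter. So there is nothing in the paper to compare against step by step; what you have supplied is a self-contained elementary proof where the authors simply assert the fact. Your two directions are both sound: the mean value theorem on coordinate slices for ($\Leftarrow$), and non-negativity of one-sided difference quotients passed to the limit for ($\Rightarrow$). You also correctly isolate the one genuine hypothesis hiding in the statement --- that the coordinate slices of $D$ must be intervals for the ($\Leftarrow$) direction to go through --- and correctly observe that this is harmless here because the theorem is only ever applied on $\mathbb{R}^4_+$, $\mathbb{R}^3_+$, and order intervals such as $[\mathbf{0},y_q]$, all of which are convex. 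One small point worth making explicit: the paper's Definition of ``cooperative'' says ``monotone increasing,'' which must be read as ``nondecreasing'' for the stated equivalence with $\partial f_i/\partial x_j\geq 0$ to hold (strict monotonicity would not be equivalent); your proof implicitly adopts this reading, and it is the intended one.
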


\begin{theorem}
Let (\ref{Sys_Prelim}) be a cooperative system and let $x(x_0,t)$ be a solution of (\ref{Sys_Prelim}) on $[0,T)$. If $y(t)$ is a differentiable function on $[0,T)$ satisfying
\begin{equation}
\frac{d y}{d t}\leq f(y), \quad y(0)\leq x_0,\nonumber
\end{equation}
then
\begin{equation}
y(t)\leq x(x_0,t), \quad t\in[0,T). \nonumber
\end{equation}
\cite[Theorem II, 12, II]{walter2012differential}
\label{Thm_Monotonicity}
\end{theorem}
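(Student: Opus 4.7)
The plan is to transfer the asymptotic results of Theorem~\ref{theoModifiedSystem} for the cooperative auxiliary system (\ref{Sys_WithMAT_eta}) to the non-monotone target system (\ref{EqModTemp}) by invoking the comparison principle (Theorem~\ref{Thm_Monotonicity}) of Appendix~C. The pivotal observation is that the right-hand side $h$ of (\ref{EqModTemp}) is dominated componentwise by the right-hand side $\tilde{h}$ of (\ref{Sys_WithMAT_eta}) on $\mathbb{R}^4_+$.

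First I would verify this pointwise inequality $h(x)\le \tilde{h}(x)$ by direct inspection of the four components. The first and fourth components of $h$ and $\tilde{h}$ agree identically. In the second component, $h$ contains the extra term $-\nu_Y\min\{\gamma M/(Y+Y_P),1\}Y$, which is nonpositive for $x\in\mathbb{R}^4_+$, so $h_2\le \tilde{h}_2$. In the third component, the inequality $\min\{\gamma M/(Y+Y_P),1\}\le \gamma M/(Y+Y_P)$ gives $h_3\le \tilde{h}_3$. This yields $h(x)\le \tilde{h}(x)$ on all of $\mathbb{R}^4_+$.

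Next I would apply Theorem~\ref{Thm_Monotonicity} with the roles reversed appropriately: let $\tilde{x}(t)$ be the solution of the cooperative system (\ref{Sys_WithMAT_eta}) starting at $x_0\in\mathbb{R}^4_+$, and let $y(t)$ be the solution of (\ref{EqModTemp}) starting at the same $x_0$. Then
\begin{equation*}
\frac{dy}{dt}=h(y)\le \tilde{h}(y),\qquad y(0)\le x_0,
\end{equation*}
so by the comparison theorem applied to the cooperative system (whose cooperativity was established in the proof of Theorem~\ref{theoModifiedSystem}) one gets $y(t)\le \tilde{x}(t)$ for all $t$. Combined with the componentwise lower bound $y(t)\ge 0$, which follows from the vector field of (\ref{EqModTemp}) pointing inwards on the boundary of $\mathbb{R}^4_+$ (as in the proof of Theorem~\ref{Prop_PosiDynSysdelta}\,a)), we obtain the sandwich $0\le y(t)\le \tilde{x}(t)$.

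With the sandwich in hand, both assertions follow directly from Theorem~\ref{theoModifiedSystem}. For (b), if $Y_P>\tilde{Y}_P^{**}$, then $\tilde{x}(t)\to TE$ for every $x_0\in\mathbb{R}^4_+$ by Theorem~\ref{theoModifiedSystem}\,c)\,i), so $y(t)\to TE$ by the squeeze, yielding global asymptotic stability of $TE$ for (\ref{EqModTemp}). For (a), if $0<Y_P\le \tilde{Y}_P^{**}$ and $x_0<\tilde{E}^{(1)}$, Theorem~\ref{theoModifiedSystem}\,c)\,ii) gives $\tilde{x}(t)\to TE$, hence again $y(t)\to TE$, so the basin of attraction of $TE$ for (\ref{EqModTemp}) contains $\{x\in\mathbb{R}^4_+:x<\tilde{E}^{(1)}\}$. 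The mildly delicate boundary case $Y_P=\tilde{Y}_P^{**}$ can be handled by noting that the two tangent equilibria coalesce and the set below the coalesced point is still absorbed by $TE$ under $\tilde{x}$. The main conceptual step is simply the componentwise domination $h\le \tilde{h}$; the rest is a short deduction from results already established.
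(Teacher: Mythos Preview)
Your proposal does not address the stated theorem. The statement you were asked to prove is Theorem~\ref{Thm_Monotonicity} itself, namely the abstract comparison principle for cooperative systems: if $\frac{dy}{dt}\le f(y)$ with $y(0)\le x_0$ and $f$ is quasi-monotone, then $y(t)\le x(x_0,t)$. This is a foundational differential-inequality result (the paper simply cites Walter's book for it). What you have written is instead a proof of Theorem~\ref{Thm_GAS_TE}, which \emph{applies} Theorem~\ref{Thm_Monotonicity} as a black box to transfer the asymptotic behaviour of the auxiliary system (\ref{Sys_WithMAT_eta}) to the original model (\ref{EqModTemp}). You explicitly invoke the comparison principle in the line ``so by the comparison theorem applied to the cooperative system\dots'', which is precisely the statement you were meant to establish.

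For the record, your argument for Theorem~\ref{Thm_GAS_TE} is essentially the paper's own: verify $h\le\tilde{h}$ componentwise, use Theorem~\ref{Thm_Monotonicity} to bound solutions of (\ref{EqModTemp}) above by those of (\ref{Sys_WithMAT_eta}), and then squeeze against $0$ using Theorem~\ref{theoModifiedSystem}. But none of this constitutes a proof of the comparison principle. A genuine proof of Theorem~\ref{Thm_Monotonicity} would proceed, for instance, by perturbing to a strict subsolution $y_\varepsilon$ with $\frac{dy_\varepsilon}{dt}<f(y_\varepsilon)$, showing that the set $\{t:y_\varepsilon(t)\not\le x(t)\}$ has no infimum in $(0,T)$ via the quasi-monotonicity of $f$, and then letting $\varepsilon\to 0$; or alternatively by a Lipschitz-based Gronwall argument on the componentwise differences. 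Your text contains no such argument.
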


\begin{theorem}
Let (\ref{Sys_Prelim}) be a cooperative system, and $a,b\in D$. If $a\leq b$ and if for $t>0$, $x(a,t)$ and $x(b,t)$ are defined, then $x(a,t)\leq x(b,t)$.
\end{theorem}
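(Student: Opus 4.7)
The plan is to derive this monotonicity result as an immediate corollary of Theorem~\ref{Thm_Monotonicity}, which has already been stated and will be assumed. The strategy is to view $x(a,t)$ as a subsolution and $x(b,t)$ as the reference solution in the comparison principle.

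First I would fix $a,b \in D$ with $a \leq b$, fix some $T > 0$ for which both $x(a,t)$ and $x(b,t)$ exist on $[0,T)$, and set $y(t) := x(a,t)$. By definition $y$ is differentiable on $[0,T)$ and satisfies
\begin{equation*}
\frac{dy}{dt}(t) = f(y(t)) \leq f(y(t)),
\end{equation*}
where the inequality is trivially satisfied as equality (the componentwise order $\leq$ is reflexive). Moreover, $y(0) = a \leq b$.

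Next, I would apply Theorem~\ref{Thm_Monotonicity} to the cooperative system~(\ref{Sys_Prelim}) with $x_0 = b$ and with the auxiliary function $y$ defined above. The hypotheses of that theorem are met: $y$ is differentiable on $[0,T)$, $\frac{dy}{dt} \leq f(y)$ holds pointwise, and $y(0) \leq b = x_0$. The conclusion of Theorem~\ref{Thm_Monotonicity} then yields $y(t) \leq x(b,t)$ for all $t \in [0,T)$, which is precisely the desired statement $x(a,t) \leq x(b,t)$.

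There is essentially no obstacle here: the only subtlety is recognizing that the hypothesis ``$\frac{dy}{dt} \leq f(y)$'' in Theorem~\ref{Thm_Monotonicity} is a weak inequality, so any genuine solution of~(\ref{Sys_Prelim}) qualifies as a subsolution in the sense required by the comparison principle. In this way the present theorem is just the special case of Theorem~\ref{Thm_Monotonicity} in which both objects compared are bona fide trajectories of the same cooperative system.
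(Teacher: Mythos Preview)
Your argument is correct: taking $y(t)=x(a,t)$ as the subsolution and $x_0=b$ in Theorem~\ref{Thm_Monotonicity} immediately yields $x(a,t)\leq x(b,t)$ on any common interval of existence. The paper itself does not supply a proof of this statement at all---it simply cites \cite[Prop.~1.1, p.~32]{smith2008monotone}---so your derivation is a genuine addition rather than a variant of the paper's argument. Deducing the monotone dependence on initial data directly from the comparison principle already recorded as Theorem~\ref{Thm_Monotonicity} is the natural route and keeps the appendix self-contained.
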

(\cite[Prop. 1.1 p32]{smith2008monotone})

\begin{theorem}
Let $a,b\in D$, such that $a\leq b, [a,b] \subseteq D$ and $f(a) \leq \textbf{0} \leq f(b)$. Then (\ref{Sys_Prelim}) defines a positive dynamical system on $[a,b]$. Moreover, if $[a,b]$ contains a unique equilibrium $p$ then, $p$ is globally asymptotically stable (GAS) on $[a,b]$.
\label{Thm_GAS_MonSys}
\end{theorem}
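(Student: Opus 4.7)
The plan is to prove both assertions by comparison with constant sub- and supersolutions, leveraging Theorem~\ref{Thm_Monotonicity} and the flow-monotonicity result stated just after it in Appendix~C. (I assume the intended sign convention, consistent with the paper's applications in Theorems~\ref{Prop_PosiDynSysdelta} and~\ref{Thm_ProperEquiNoMATScarce}, is that $f$ points inward on $\partial[a,b]$.)

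For the positive invariance claim, first I would apply Theorem~\ref{Thm_Monotonicity} to the constant curve $y(t)\equiv a$: the sign condition on $f(a)$ together with $\dot y=0$ places $y$ on the correct side of the subsolution inequality, so for every $x_0\in[a,b]$ one obtains $a\leq x(x_0,t)$ throughout the maximal forward interval of existence. The symmetric dual of Theorem~\ref{Thm_Monotonicity} (reverse all inequalities in its proof, which is standard for cooperative systems), applied to $v(t)\equiv b$, supplies the upper bound $x(x_0,t)\leq b$. Because $[a,b]$ is compact, standard continuation of solutions then rules out finite-time blow-up and yields global existence for $t\geq 0$, i.e.\ a positive dynamical system on $[a,b]$.

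For the GAS assertion, the plan is the classical monotone-dynamical-systems argument. First I would show that $t\mapsto x(a,t)$ is monotone non-decreasing: the inward sign of $f(a)$ plus Theorem~\ref{Thm_Monotonicity} gives $x(a,s)\geq a$ for every $s\geq 0$, and then flow-monotonicity (the unlabelled theorem following Theorem~\ref{Thm_Monotonicity}) applied to this inequality yields $x(a,t+s)=x(x(a,s),t)\geq x(a,t)$. Being bounded above by $b$, this orbit converges coordinatewise to some $x_\star\in[a,b]$; continuity of $f$ forces $x_\star$ to be an equilibrium (a bounded monotone orbit of an autonomous ODE must limit to a rest point), so by uniqueness $x_\star=p$. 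The symmetric argument gives $x(b,t)\searrow p$. For arbitrary $x_0\in[a,b]$, the sandwich $x(a,t)\leq x(x_0,t)\leq x(b,t)$ supplied again by flow-monotonicity then forces $x(x_0,t)\to p$, establishing global attraction. Lyapunov stability of $p$ in $[a,b]$ I would obtain by running the same argument on shrinking order-intervals $[a',b']\ni p$: for any $\varepsilon>0$, continuity of $f$ at $p$ together with the sign information at $a,b$ permits choosing $a',b'$ inside $B_\varepsilon(p)$ satisfying the same hypotheses, making $[a',b']$ forward-invariant and yielding a neighbourhood basis of trapping sets at $p$.

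The main obstacle is twofold. First, the upper-bound step of part~1 and the squeeze in part~2 both rely on the supersolution form of Theorem~\ref{Thm_Monotonicity}, which is not explicitly stated in the paper; justifying it cleanly requires quoting or reproducing the dual comparison statement. Second, and more delicate, constructing the shrinking trapping intervals $[a',b']$ for Lyapunov stability requires arranging that no additional equilibrium is trapped inside and that the correct sign conditions $f(a'),f(b')$ persist: this uses the uniqueness of $p$ in $[a,b]$ together with a continuity argument along a path $a\nearrow p\swarrow b$, and is the step where a completely general proof (beyond the specific applications in the body of the paper) needs the most care.
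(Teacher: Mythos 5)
The paper offers no proof of this theorem---it is quoted verbatim from the cited references (Smith's monograph and Anguelov et al.)---and your argument is exactly the standard proof from those sources: constant sub-/supersolutions give invariance of $[a,b]$, the corner orbits $x(a,t)$ and $x(b,t)$ are monotone and converge to the unique equilibrium $p$, and flow-monotonicity squeezes every other orbit between them; you also correctly spotted that the stated sign condition $f(a)\leq \textbf{0}\leq f(b)$ is a typo for $f(a)\geq \textbf{0}\geq f(b)$, as the paper's own applications of the theorem confirm. The only remark worth adding is that the step you flag as delicate is in fact immediate: for Lyapunov stability take $a'=x(a,T)$ and $b'=x(b,T)$, which satisfy $f(a')\geq \textbf{0}\geq f(b')$ automatically because the corner orbits are monotone in $t$, contain no equilibrium other than $p$ since $[a',b']\subseteq[a,b]$, and shrink to $p$ as $T\to\infty$.
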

(\cite[Thm. 3.1 p18]{smith2008monotone}, \cite[Thm. 6]{anguelov2012})

\begin{theorem}
Let  $a,b\in D$, such that $a\leq b$, $[a,b]\subseteq D$ and $f(a)=f(b)=0$ for (\ref{Sys_Prelim}). Then
\begin{itemize}
\item[a)] (\ref{Sys_Prelim}) defines a positive dynamical system on $[a,b]$.
\item[b)] If $a$ and $b$ are the only equilibria of the dynamical system on $[a,b]$, then all non-equilibrium solutions initiated in $[a,b]$ converge to one of them, that is, either all converge to $a$ or all converge to $b$.
\end{itemize}
\label{Thm_BasinAttraction}
\end{theorem}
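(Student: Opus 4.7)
The plan is to read (a) from the positive invariance of $[a,b]$ under the flow, and (b) from a squeeze argument based on the cooperative structure and the hypothesis that $a$ and $b$ are the only equilibria.

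For part (a), since $f(a)=f(b)=0$, the constants $t\mapsto a$ and $t\mapsto b$ are global solutions of (\ref{Sys_Prelim}). For any $x_0\in[a,b]$, applying the monotonicity of the flow in the initial condition (the theorem stated between Theorem~\ref{Thm_Monotonicity} and Theorem~\ref{Thm_GAS_MonSys}) yields $a = x(a,t) \le x(x_0,t) \le x(b,t) = b$ on the maximal interval of existence. This uniform bound precludes blow-up, so $x(x_0,\cdot)$ extends to $[0,\infty)$ and stays in $[a,b]$. Hence (\ref{Sys_Prelim}) defines a positive dynamical system on $[a,b]$.

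For part (b), the first building block I would establish is: if $c\in[a,b]$ satisfies $f(c)\ge 0$, then comparing $x(c,\cdot)$ with the constant subsolution $y(t)\equiv c$ via Theorem~\ref{Thm_Monotonicity} gives $x(c,t)\ge c$ for all $t\ge 0$; combined with flow monotonicity this shows that $t\mapsto x(c,t)$ is componentwise nondecreasing, and being bounded above by $b$ it converges to a limit which is necessarily stationary, hence an equilibrium, hence lies in $\{a,b\}$. Since the limit dominates $c$, it must be $b$ unless $c=a$. The dual statement holds if $f(c)\le 0$: the trajectory decreases monotonically to $a$.

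For a general non-equilibrium $x_0\in[a,b]$, the strategy is to sandwich the trajectory between two such monotone comparison trajectories. I would construct approximations $c_n\le x_0\le d_n$ with $f(c_n)\ge 0$, $f(d_n)\le 0$, and $c_n\to x_0$, $d_n\to x_0$; the comparison principle then yields $x(c_n,t)\le x(x_0,t)\le x(d_n,t)$, and passing to $\omega$-limits forces $\omega(x_0)\subset\{a,b\}$, after which connectedness of $\omega$-limit sets of a continuous semiflow pins $\omega(x_0)$ to a single point. The main obstacle is the existence of these dominating sequences with the prescribed sign of $f$: this is precisely where the hypothesis that $a$ and $b$ are the \emph{only} equilibria on $[a,b]$ is indispensable, because a third equilibrium on the interface between $\{f\ge 0\}$ and $\{f\le 0\}$ would block the approximation and absorb the comparison trajectories. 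The required construction, together with the fact that a cooperative semiflow has totally ordered $\omega$-limit sets (hence singletons when contained in the two-point set $\{a,b\}$), is standard in the monotone dynamical systems literature, specifically in Smith's monograph cited under Theorem~\ref{Thm_GAS_MonSys}.
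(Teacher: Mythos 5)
Part a) of your proposal is correct and is essentially the paper's argument: since $f(a)=f(b)=\textbf{0}$, the hypothesis $f(a)\leq\textbf{0}\leq f(b)$ of Theorem~\ref{Thm_GAS_MonSys} holds, and trapping $a=x(a,t)\leq x(x_0,t)\leq x(b,t)=b$ gives invariance and global existence. Note, for context, that the paper's own proof of part b) is left unfinished in the source (it breaks off mid-sentence and defers to \cite[Thm. 2.3.2, Prop. 2.2.1]{smith2008monotone}), so the comparison below is against what a correct argument must contain rather than against a completed proof in the text.

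Part b) of your proposal has a genuine gap, and the central construction is in fact self-defeating. Your building block is fine: if $f(c)\geq\textbf{0}$ then $t\mapsto x(c,t)$ is nondecreasing and converges to an equilibrium dominating $c$, hence to $b$ unless $c=a$; dually for $f(d)\leq\textbf{0}$. But you then ask, for a non-equilibrium $x_0$, for sequences $c_n\leq x_0\leq d_n$ with $f(c_n)\geq\textbf{0}$, $f(d_n)\leq\textbf{0}$ and $c_n,d_n\to x_0$. Since $x_0$ is not an equilibrium, for large $n$ one has $c_n\neq a$ and $d_n\neq b$, so by your own building block and the comparison $x(c_n,t)\leq x(x_0,t)\leq x(d_n,t)$ the lower bound tends to $b$ while the upper bound tends to $a$, forcing $b\leq a$, i.e. $a=b$. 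Hence such a two-sided approximation cannot exist at any non-equilibrium point, and there is in any case no reason the sets $\{f\geq\textbf{0}\}$ and $\{f\leq\textbf{0}\}$, which are carved out by the nullclines, should accumulate at an arbitrary $x_0$ from below and above. The sandwich cannot be the mechanism of the proof. Moreover, even granting that each orbit converges to $a$ or to $b$, you have not addressed the actual content of the statement, namely the uniform dichotomy that \emph{all} non-equilibrium orbits converge to the \emph{same} endpoint; this is exactly what the paper uses later (in Theorem~\ref{theoModifiedSystem} c) ii), the fact that $TE$ attracts \emph{some} orbits in $[TE,\tilde{E}^{(1)}]$ is upgraded to \emph{all}). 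A correct route is the one the citation points to: the nonordering principle for $\omega$-limit sets of cooperative systems (a limit set contains no pair of order-related points, so a limit set inside $\{a,b\}$ with $a<b$ is a singleton --- your parenthetical says ``totally ordered'', which is the wrong direction, though your appeal to connectedness also suffices), combined with the order-interval trichotomy/connecting-orbit argument of \cite{smith2008monotone}, which uses the absence of interior equilibria to produce a monotone connecting orbit in one direction only and hence a single attracting endpoint.
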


\begin{proof}
a) The proof follows from Theorem~\ref{Thm_GAS_MonSys}.

b) Assume that there exist in the interior of the order interval $[a,b]$ two points $x_1$ and $x_2$ such that $x(x_1,t)$...
\cite[Thm. 2.3.2, Prop. 2.2.1]{smith2008monotone}
\end{proof}


\bibliographystyle{plain}
\bibliography{Biblio}

\end{document}